\newcommand{\tieconcat}{%
	\mathbin{\mathpalette\dotieconcat\relax}%
}
\newcommand{\dotieconcat}[2]{
	\text{\raisebox{.8ex}{$\smallfrown$}}%
}
\newcommand\reallywidecheck[1]{%
	\savestack{\tmpbox}{\stretchto{%
			\scaleto{%
				\scalerel*[\widthof{\ensuremath{#1}}]{\kern-.6pt\bigwedge\kern-.6pt}%
				{\rule[-\textheight/2]{1ex}{\textheight}}
			}{\textheight}%
		}{0.5ex}}%
	\stackon[1pt]{#1}{\scalebox{-1}{\tmpbox}}%
}
\DeclareMathOperator{\dom}{dom}
\DeclareMathOperator{\ran}{ran}
\DeclareMathOperator{\cf}{cf}
\DeclareMathOperator{\supp}{supp}
\DeclareMathOperator{\pr}{pr}
\DeclareMathOperator{\htt}{ht}
\DeclareMathOperator{\Le}{L}
\DeclareMathOperator{\mo}{mod}
\DeclareMathOperator{\otp}{otp}
\newtheorem{ttt}{Theorem}[section]
\newtheorem{llll}[ttt]{Lemma}
\newtheorem{ccc}[ttt]{Claim}
\newtheorem{eee}[ttt]{Example}
\newtheorem{fff}[ttt]{Fact}
\newtheorem{rrr}[ttt]{Remark}
\newtheorem{sss}[ttt]{Statement}
\newtheorem{ddd}[ttt]{Definition}
\newtheorem{qqq}[ttt]{Question}
\newtheorem{cccc}[ttt]{Corollary}
\newtheorem{nnn}[ttt]{Notation}
\newtheorem{ppp}[ttt]{Problem}
\newtheorem{pppp}[ttt]{Proposition}
\newtheorem{ccccc}[ttt]{Conjecture}
\newtheorem{sccc}[ttt]{Subclaim}
\newcommand{\beq}{\begin{equation} }
\newcommand{\bt}{\begin{ttt}}
\newcommand{\bl}{\begin{llll}}
\newcommand{\bc}{\begin{ccc}}
\newcommand{\bex}{\begin{eee}}
\newcommand{\bfa}{\begin{fff}}
\newcommand{\br}{\begin{rrr}\upshape}
\newcommand{\bst}{\begin{sss}}
\newcommand{\bd}{\begin{ddd}\upshape}
\newcommand{\bdd}{\begin{ddd}}
\newcommand{\bq}{\begin{qqq}}
\newcommand{\bnn}{\begin{nnn}}
\newcommand{\bpr}{\begin{ppp}}
\newcommand{\bprop}{\begin{pppp}}
\newcommand{\bcor}{\begin{cccc}}
\newcommand{\bcon}{\begin{ccccc}}
\newcommand{\bsc}{\begin{sccc}}
\newcommand{\esc}{\end{sccc}}
\newcommand{\eeq}{\end{equation}}
\newcommand{\et}{\end{ttt}}
\newcommand{\el}{\end{llll}}
\newcommand{\ec}{\end{ccc}}
\newcommand{\eex}{\end{eee}}
\newcommand{\efa}{\end{fff}}
\newcommand{\er}{\end{rrr}}
\newcommand{\est}{\end{sss}}
\newcommand{\ed}{\end{ddd}}
\newcommand{\eq}{\end{qqq}}
\newcommand{\ecor}{\end{cccc}}
\newcommand{\econ}{\end{ccccc}}
\newcommand{\enn}{\end{nnn}}
\newcommand{\epr}{\end{ppp}}
\newcommand{\eprop}{\end{pppp}}
\newcommand{\bp}{\begin{proof}}
\newcommand{\ep}{\end{proof}}
\newcommand{\es}{\emptyset}
\newcommand{\beeq}{\begin{equation}}
\newcommand{\eeeq}{\end{equation}}
\def\sq{\subseteq}
\def\se{\setminus}
\def\restricttt#1{\raise-.3ex\hbox{\ensuremath|}_{#1}}
\def\restrictt#1{\raise-.2ex \hbox{\ensuremath \upharpoonright }{#1}}
\def\restri#1{\raise-.1ex\hbox{\ensuremath \upharpoonright}_{#1}}
\newcommand\restrict[1]{\raisebox{-.3ex}{$\upharpoonright$}_{#1}}
\def\name#1{\mathchoice%
	{\setbox0=\hbox{$\displaystyle #1$}
		\setbox1=\vtop{\ialign{##\crcr
				$\hfil{\displaystyle #1}\hfil$\crcr\noalign{\kern2pt%
					\nointerlineskip}$\hfil\mathord{\displaystyle \sim}%
				\hfil$\crcr\noalign{\kern2pt\nointerlineskip}}}%
		\vphantom{\copy1}%
		\setbox2=\hbox{$\displaystyle \sim$}%
		\wd1=\wd0\dp1=0cm\ifdim\wd2>\wd1 \wd1=\wd2\else\relax\fi
		\ht1=\ht0\relax
		\box1}%
	{\setbox0=\hbox{$\textstyle #1$}
		\setbox1=\vtop{\ialign{##\crcr
				$\hfil{\textstyle #1}\hfil$\crcr\noalign{\kern1.2pt%
					\nointerlineskip}$\hfil\mathord{\textstyle \sim}%
				\hfil$\crcr\noalign{\kern1.5pt\nointerlineskip}}}%
		\vphantom{\copy1}%
		\setbox2=\hbox{$\textstyle \sim$}%
		\wd1=\wd0\dp1=0cm\ifdim\wd2>\wd1 \wd1=\wd2\else\relax\fi
		\ht1=\ht0\relax
		\box1}{\setbox0=\hbox{$\scriptstyle #1$}
		\setbox1=\vtop{\ialign{##\crcr
				$\hfil{\scriptstyle #1}\hfil$\crcr\noalign{\kern1pt%
					\nointerlineskip}$\hfil\mathord{\scriptstyle \sim}%
				\hfil$\crcr\noalign{\kern2.1pt\nointerlineskip}}}%
		\setbox2=\hbox{$\scriptstyle \sim$}%
		\vphantom{\copy1}%
		\wd1=\wd0\dp1=0cm\ifdim\wd2>\wd1 \wd1=\wd2\else\relax\fi
		\ht1=\ht0\relax
		\box1}{\setbox0=\hbox{$\scriptscriptstyle #1$}
		\vtop{\ialign{##\crcr
				$\hfil{\scriptscriptstyle #1}\hfil$\crcr\noalign{\kern1pt%
					\nointerlineskip}$\hfil\mathord{\scriptscriptstyle \sim}%
				\hfil$\crcr\noalign{\kern1.5pt\nointerlineskip}}}%
	}}
\newtheorem{lemma}[ttt]{Lemma}
\newtheorem{theorem}[ttt]{Theorem}
\newtheorem{cl}[ttt]{Claim}
\newtheorem{question}[ttt]{Question}
\newtheorem{rem}[ttt]{Remark}
\newtheorem{cor}[ttt]{Corollary}
\begin{document}

\title{On the spectra of cardinalities of branches of Kurepa trees}

\author{Márk Poór}
\address{E\"otv\"os Lor\'and University, Institute of Mathematics, P\'azm\'any P\'eter
	s. 1/c, 1117 Budapest, Hungary}
\email{sokmark@caesar.elte.hu}
\thanks{The author was supported by the National
	Research, Development and Innovation Office – NKFIH, grant no. 104178, 124749 and 129211.\\
\raisebox{0.5cm}{\parbox[c]{11cm}{\scshape Supported by the \'UNKP-18-3 New National Excellence Program of the Ministry of Human Capacities.}}}

\maketitle

\begin{abstract}
We are interested in the possible sets of cardinalities of branches of Kurepa trees in models of $ZFC$ $+$ $CH$.
In this paper we present a sufficient condition (for sets of cardinals) to be consistently the set of cardinalities of  branches of Kurepa trees.
\end{abstract}

\section{Introduction}
Kurepa trees are trees of height $\omega_1$ which have countable levels, but have more than $\omega_1$-many cofinal branches.
We are interested in how those sets of cardinals look like for which there is a model of $ZFC$ such that the following holds:
\[ \text{a cardinal } \kappa \text{ is an element of our fixed set } S \]
\[ \text{ iff } \]
\[ \text{ there is a Kurepa tree } T \text{ with exactly }\kappa \text{-many cofinal branches.} \]
J.H. Silver showed that the existence of Kurepa trees is independent of $ZFC + CH$ \cite{silb}. R. Jin. and S. Shelah constructed a model of $CH$ and $2^{\omega_1} = \omega_4$, where Kurepa trees only with $\omega_3$-many cofinal branches exist, and another with $2^{\omega_1} > \omega_2$ and Kurepa trees having exactly $2^{\omega_1}$-many cofinal branches \cite{Jin}. Moreover, in the latter there are no Jech-Kunen trees (a tree of height $\omega_1$ is a Jech-Kunen tree, iff each level is of power at most $\omega_1$, with the cardinality of cofinal branches strictly between $\omega_1$ and $2^{\omega_1}$).  In \cite{Jin2} they prove that it is consistent with $ZFC + CH$ that there are Jech-Kunen trees, but no Kurepa trees.

If for a given sequence of cardinals $\langle \kappa_i : \ i \in \omega\rangle$ we have
 Kurepa trees $T_i$ ($i \in \omega$) with each $T_i$ having $\kappa_i$-many cofinal branches, then $\cup \{T_i: \ i \in \omega \}$ is a Kurepa tree with $\sup \{ \kappa_i: \ i \in \omega \}$-many cofinal branches. This means that the set of cardinalities of Kurepa trees is closed under taking limits of countable sequences.
Similarly, it is not hard to see that this set is closed under taking limits of $\omega_1$-long sequences too.
Our goal is that for a given set $S$ of cardinals satisfying a slightly strengthened version of this necessary condition (in a countable transitive model [c.t.m.] $M$ of $ZFC$) to construct a forcing extension $M' \supseteq M$ where
\[  M' \models S = \{ \kappa: \ \text{ there exists a Kurepa tree } T \text{ such that } |\mathcal{B}(T)| = \kappa \}  \]
(where $\mathcal{B}(T)$ denotes the set of cofinal branches of $T$).
Our main result, Theorem  $\ref{foo}$ implies that, for example, consistently there are Kurepa trees with  $\aleph_{2k}$-many ($k \in \omega$), $\aleph_\omega$- and $\aleph_{\omega+1}$-many cofinal branches (but there are no Kurepa trees with $\aleph_{2k+1}$-many cofinal branches).

\section{Preliminaries and notations}

In this paper, all ordinals are von Neumann ordinals, and by the cardinality of a set $S$ (in symbols $|S|$) we mean the least ordinal $\alpha$ such that there exists a bijection between $\alpha$ and $S$.
For any function $f$ with domain $\dom(f) = S$, 
the following sequencelike notation will also symbol the set $f$
\[ \langle f_s : \ s \in S \rangle, \]
that is
\[ f = \{ \langle s, f(s) \rangle : \ s \in S \} =  \langle f_s : \ s \in S \rangle. \]
For a given set $S$, and ordinal $\beta$, $S^\beta $ will symbol the set of functions from $\beta$ to $S$, i.e.  $S^\beta = \{ f: \beta \to S \}$. 
Similarly $S^{<\beta} = \bigcup_{\alpha < \beta} S^\alpha $.
We use the notation (for any set $S$ and cardinal $\lambda$)  $[S]^{\lambda} = \{ H \in \mathcal{P}(S): \ |H| = \lambda \}$,
and similarly $[S]^{<\lambda} = \{ H \in \mathcal{P}(S): \ |H| < \lambda \}$.
Regarding forcing we refer to \cite{tools} and \cite{kunen}.
\bd A tree $\langle T, \prec_T \rangle$ is a partially ordered set (poset) in which for each $x \in T$
	the set
	\[ T_{\prec x} = \{ y \in T: \ y \prec_T x \} \]
is well ordered by $\prec_T$.
\ed

\bd
	The height of $x$ in the tree $T$  is the  order type of $T_{\prec x}$
	\[ \htt(x,T) =  \otp(T_{\prec x}). \]
\ed

\bd
	For each ordinal $\alpha$ the $\alpha$'th level of $T$, or $\Le_\alpha(T)$ is 
	\[ \{ x \in T: \ \htt(x,T) = \alpha \}.  \]
	The restriction of $T$ to $\alpha$  is 
	\[ T\restrict{\alpha} = \cup \{ \Le_\beta(T) : \ \beta < \alpha \} .\]
\ed

\bd
	The height of the tree $T$, or $\htt(T)$  is the least $\beta$ such that
	\[ \Le_\beta(T) = \emptyset. \]
	A tree of height $\omega_1$, with $|\Le_\alpha(T)| < \omega_1$ for each $\alpha$ is called an $\omega_1$-tree.
\ed

From now on by branch we will mean cofinal branch.
\bd
	A branch of a tree $T$ is an ordered set (w.r.t. $\prec_T$) containing exactly one
	 element of each $\Le_\alpha(T)$, $\alpha < \htt(T)$. For a given tree $T$ $\mathcal{B}(T)$ denotes the set of branches of $T$.
\ed

\bd
	An $\omega_1$-tree $T$  is a Kurepa tree if
	it has more than $\omega_1$-many branches.
\ed

In this paper we will restrict our attention to trees 
that are downward closed subsets of $ 2^{<\omega_1}$,
i.e. a set $T$ of $0-1$-valued functions on countable ordinals, where 
\begin{equation} \label{fa}
(T \subseteq 2^{<\omega_1})  \ \text{ and } \ f \in T, \ \beta < \dom(f)  \ \text{ implies } \ f\restrict{\beta} \in T ,\end{equation}
and $f \prec_{T} g$, iff $g$ extends $f$ as a function ($f \subseteq g$). 
Then it is easy to see that for each $f \in T$, $\htt(f,T) = \dom(f)$,
and $\Le_\gamma(T) = T \cap 2^\gamma$.)
The following well-known lemma states that regarding our problem,  we can assume that trees are of the form as in $\eqref{fa}$.

\begin{lemma}
	Suppose that $T$ is an $\omega_1$-tree. Then there exists an $\omega_1$-tree $T' \subseteq 2^{<\omega_1}$ which is downward closed (i.e. $T'$ is of the form $\eqref{fa}$) with the same cardinality of branches, i.e.
	\[ |\mathcal{B}(T)|= |\mathcal{B}(T')|  \] 
\end{lemma}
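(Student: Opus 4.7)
The plan is to build a level-and-order-embedding $\phi : T \to 2^{<\omega_1}$ and then take $T'$ to be the downward closure of $\phi(T)$ in $2^{<\omega_1}$. I first reduce to the case that $T$ has a unique root: if $L_0(T)$ has more than one element, adjoin a new minimum $\hat{0}$ below $T$, which does not change $|\mathcal{B}(T)|$. Then I fix a strictly increasing continuous $g : \omega_1 \to \omega_1$ with $g(0) = 0$ and $g(\alpha+1) \geq g(\alpha) + \omega$, for instance $g(\alpha) = \omega \cdot \alpha$, so that each successor step leaves $\omega$ fresh coordinates available to encode the at most countable branching of $T$.

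The map $\phi$ is defined by recursion on the level of $T$ with the invariant that $\phi(x) \in 2^{g(\htt(x))}$ and $y <_T x$ iff $\phi(y) \subsetneq \phi(x)$. Set $\phi(\hat{0}) = \emptyset$. At a successor stage, enumerate the countably many immediate successors of each $y \in L_\alpha(T)$ and assign to them pairwise distinct extensions of $\phi(y)$ in $2^{g(\alpha+1)}$; this is possible because $[g(\alpha), g(\alpha+1))$ has order type at least $\omega$. At a limit $\alpha$, set $\phi(x) := \bigcup_{y <_T x} \phi(y)$, which is a single function of domain $\sup_{\beta < \alpha} g(\beta) = g(\alpha)$ by continuity of $g$ (and the union is along a chain by the inductive hypothesis). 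Injectivity at level $\alpha$ is clear at successors and at limits follows because two distinct $x \neq x'$ in $L_\alpha(T)$ must differ at some earlier level, where $\phi$ is already injective.

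Setting $T' := \{ s \in 2^{<\omega_1} : s \subseteq \phi(x) \text{ for some } x \in T\}$ gives a downward closed subtree of $2^{<\omega_1}$ of the required form. Using that $\phi(x)_{|g(\alpha)} = \phi(x_{|\alpha})$ (where $x_{|\alpha}$ is the level-$\alpha$ predecessor of $x$), one obtains $T' \cap 2^{g(\alpha)} = \phi(L_\alpha(T))$, which is countable; and for intermediate $\gamma \in (g(\alpha), g(\alpha+1))$ the level $T' \cap 2^\gamma$ consists of restrictions of elements of $\phi(L_{\alpha+1}(T))$ and hence is also countable. Since $g$ is cofinal in $\omega_1$, one gets $\htt(T') = \omega_1$. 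The assignment that sends a branch $\langle x_\alpha : \alpha < \omega_1 \rangle$ of $T$ to $\bigcup_{\alpha < \omega_1} \phi(x_\alpha) \in 2^{\omega_1}$ is then a bijection $\mathcal{B}(T) \to \mathcal{B}(T')$, its inverse reading off from a branch $b$ of $T'$ the unique $x_\alpha \in L_\alpha(T)$ with $\phi(x_\alpha) = b_{|g(\alpha)}$.

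I expect the main obstacle to be getting the successor-stage bookkeeping right — $g$ must be chosen so that every successor step has $\omega$ fresh coordinates, otherwise one cannot embed a countably-infinite level of $T$ into a level of $2^{<\omega_1}$ of merely finite height. The only conceptual worry is that the interpolating nodes of $T'$ at levels $\gamma$ with $g(\alpha) < \gamma < g(\alpha+1)$ might spawn extra $\omega_1$-branches; this is ruled out because any branch of $T'$ must meet $L_{g(\alpha)}(T') = \phi(L_\alpha(T))$ at every $\alpha < \omega_1$, hence must be induced by a genuine branch of $T$.
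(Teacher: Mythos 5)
Your construction has a genuine gap at the limit stage. You claim that two distinct $x \neq x'$ in $L_\alpha(T)$, $\alpha$ limit, ``must differ at some earlier level.'' For an arbitrary tree this is false: nothing in the definition of a tree prevents two distinct nodes on a limit level from having exactly the same set of predecessors --- indeed, excluding this is precisely the third clause in the paper's definition of a \emph{normal} tree, so for a general Kurepa tree you cannot assume it. For such a pair your recursion forces $\phi(x) = \bigcup_{y <_T x}\phi(y) = \phi(x')$, so $\phi$ is not injective on limit levels. Consequently the ``unique $x_\alpha \in L_\alpha(T)$ with $\phi(x_\alpha) = b_{|g(\alpha)}$'' used to define the inverse of your branch map need not be unique, and, worse, nothing in your successor step (which only separates immediate successors of one fixed parent) prevents the subtrees above $x$ and above $x'$ from being mapped onto the same part of $T'$; in that case distinct branches of $T$ through $x$ and $x'$ are identified in $T'$, the claimed bijection $\mathcal{B}(T) \to \mathcal{B}(T')$ fails, and you only get $|\mathcal{B}(T')| \leq |\mathcal{B}(T)|$ together with an unproved surjectivity claim.

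The repair is standard and close to what you wrote: make $\phi(x)$ code the node $x$ itself, not merely its set of predecessors. Fix injections $e_\beta : L_\beta(T) \to \omega$ and send $x \in L_\alpha(T)$ to the element of $2^{\omega\cdot(\alpha+1)}$ whose $\beta$-th block of length $\omega$ codes $e_\beta(x_\beta)$, where $x_\beta$ is the predecessor of $x$ on level $\beta$, including $\beta = \alpha$, i.e.\ $x$ itself. This map is injective on every level (at a limit level two nodes with the same predecessors still receive different top blocks), satisfies $\phi(x)_{|\omega\cdot(\beta+1)} = \phi(x_\beta)$, and is a strict order embedding; with it, the rest of your argument --- taking $T'$ to be the downward closure, checking countable levels and height $\omega_1$, and the branch correspondence with its inverse --- goes through as you wrote it. As it stands, though, your proof only covers trees that are already normal (Hausdorff) at limit levels, which is exactly what the lemma is not allowed to assume.
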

\begin{proof}
	First we need the following claim.
\begin{cl}
	Assume that $\langle T, \prec_T \rangle $ is a tree such that $|\Le_\alpha(T)| \leq 2^\omega$, $\Le_{\omega_1} = \emptyset$. Then there is an order-preserving mapping $F: T \to 2^{<\omega_1}$ such that
	for the downward closed tree generated by $\ran(F)$  
	\[ T' = \{f \in 2^{<\omega_1}: \ f \subseteq F(t) \text{ for some } t \in T \} \subseteq 2^{<\omega_1} \]
	the following holds. For each $\alpha <\omega_1$ for the $\omega \cdot (\alpha+1)$-th level of $T'$ 
	\begin{equation} \label{szint} \Le_{\omega \cdot (\alpha+1)}(T') =2^{\omega \cdot (\alpha+1)} \cap T' = \{ F(t): \ t \in \Le_\alpha(T) \}. \end{equation} 
\end{cl}
\begin{proof}
	Fix an injection $G: T \to 2^\omega$. For each $t \in T$ if $t \in \Le_\alpha(T)$ then define
	for each $\beta < \alpha$ the element $t_\beta$ to be the unique element in $\Le_\beta(T)$ under $t$, i.e., 
	\[ t_\beta \prec_T t, \ \ t_\beta \in L_\beta(T), \] 
	and let
	\[ t_\alpha = t. \]
	 Now define $F(t) \in 2^{\omega \cdot (\alpha+1)}$ as follows.
	\begin{equation}\label{Fdf} (F(t))(\beta \cdot \omega + n) = (G(t_{\beta}))(n) \ (\beta \leq \alpha, \ n \in \omega). \end{equation}
	Now if $f \in 2^{\omega \cdot (\gamma+1)}$ ($\gamma <\omega_1$) is the restriction of $F(t)$ for some $t \in T$, i.e. $f = F(t)\restrict{\omega \cdot (\gamma+1)}$, and
	if $\alpha$ is such that $t \in \Le_\alpha(T)$, then clearly $\alpha \geq \gamma$. Using that $t_\beta = (t_\gamma)_\beta$ if $\beta \leq \gamma$,
	 we obtain by $\eqref{Fdf}$ that
	\[ f(\beta \cdot \omega + n) = F(t)\restrict{\omega \cdot (\gamma+1)}(\beta \cdot \omega + n) = (G(t_{\beta}))(n) = (G((t_{\gamma})_\beta))(n)  \ (\beta \leq \gamma, \ n \in \omega),\]
	therefore $f = F(t_\gamma)$.

\end{proof}
Obviously $|\mathcal{B}(T)| \leq |\mathcal{B}(T')|$.

The function $F: T \to T'$ given by the claim is an order-preserving embedding from $\langle T, \prec_T \rangle$ to $\langle T', \subseteq \rangle$. 
Moreover, 
the fact that the $\omega \cdot (\alpha +1)$-th level of $T'$ is the $F$-image of $\Le_\alpha(T)$ (by $\eqref{szint}$) implies that for each cofinal branch $b \subseteq T'$ and $\alpha < \omega_1$
\[ b \cap 2^{\omega \cdot (\alpha+1)} = b \cap \Le_{\omega \cdot (\alpha+1)}(T') = \{ F(t) \} \ \ \text{ for some } t \in \Le_\alpha(T). \]
This means that (fixing $b \in \mathcal{B}(T')$) by the order-preservation
\[ \{ t \in T: \ F(t) \in b \} \ \text{ is a branch in } T,\]
therefore $|\mathcal{B}(T)| \geq |\mathcal{B}(T')|$, indeed.

\end{proof}

Before stating our main theorem, we need some technical preparations.

\bd
	Let the ordinal $\alpha \leq \omega_1$ be given, and let $a,b \in 2^\alpha$. We define the mapping 
	$F_{ab} : 2^{\leq \omega_1} \to 2^{\leq \omega_1}$ as follows
	\[ \begin{array}{ccc}
	 
	 \end{array}
	 \]
	 \[ \begin{array}{ll}
	 s \in 2^\beta  \mapsto & F_{ab}(s) \in 2^\beta  \\
	 & \beta \ni \gamma \mapsto \left\{  \begin{array}{lr}  s(\gamma) + a(\gamma) + b(\gamma)  \ \ (\mo \ 2) & \text{ if } \ \gamma < \alpha \\ s(\gamma)  & \ \ \gamma \geq \alpha \end{array}  \right. \end{array} \]
	 
\ed
	It can be easily seen that $F_{ab}$ is an automorphism of the tree $\langle 2^{<\omega_1}, \subseteq \rangle$.

\bd
	A tree $T \subseteq 2^{<\omega_1}$ which is downward closed is said to be homogeneous if for each pair $a,b \in \Le_\alpha(T)$ on the same level,
	$F_{ab}$ is an automorphism of $\langle T, \subseteq \rangle$.
\ed

\bd \label{normaltree}
	A tree $\langle T, \prec_T \rangle$ is normal if the following conditions hold
	\begin{itemize}
		\item each $t \in T$ which is not on the top level of $T$ has at least two immediate successors in $T$,
		\item for each $t \in \Le_\alpha(T)$, and each $\beta > \alpha$ (where $\beta < \htt(T)$) there exists an element $t' \in \Le_\beta(T)$, $t \prec_T t'$,
		\item for each limit $\alpha $ (where $\alpha < \htt(T)$) and  $b \in \mathcal{B}(T\restrict{\alpha})$, there is at most one common upper bound of $b$ in $\Le_\alpha(T)$.
	\end{itemize}
\ed

\bd \label{PhomD}
	The set
\[ \mathbb{P}_{\hom} = \{ T \subseteq 2^{<\omega_1}: \ T \text{ is a countable homogenous normal subtree } \}  \]
is a notion of forcing with the partial order
\[ T \leq T' \ \ \iff  \ T\restrict{\htt(T')} = T'  ,\]
i.e. the condition  $T$ extends the condition $T'$ iff the tree $T$ is an end-extension of $T'$.

\ed
(It is easy to see that a $\mathbb{P}_{\hom}$-generic filter corresponds to a homogeneous subtree of $2^{<\omega_1}$ of height $\omega_1$.)

 \bd
 	A partial order $\mathbb{P}$ is $\lambda$-closed, if whenever  $\langle p_\alpha: \ \alpha<\gamma \rangle$
 	is a decreasing sequence (i.e. $\beta < \alpha$ implies $p_\beta \geq p_\alpha$ ) of length $\gamma<\lambda$, then  
 	there exists a common lower bound $p \in \mathbb{P}$, i.e. $p \leq p_\alpha$ for each $\alpha < \gamma$.
 
 \ed

\begin{lemma} \label{omclos}
	$\mathbb{P}_{\hom}$ is $\omega_1$-closed.
\end{lemma}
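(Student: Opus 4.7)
The plan is to show that any decreasing sequence $\langle T_\alpha : \alpha < \gamma \rangle$ with $\gamma < \omega_1$ admits a common lower bound. If $\gamma$ is a successor, say $\gamma = \gamma'+1$, then $T_{\gamma'}$ itself is a lower bound, so the only substantive case is $\gamma$ a limit ordinal, which I treat below.

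Define $T := \bigcup_{\alpha<\gamma} T_\alpha$ and let $\delta := \sup_{\alpha<\gamma}\htt(T_\alpha)$. Since $\gamma<\omega_1$ and each $T_\alpha$ is countable with countable height, $T$ is countable, $\delta<\omega_1$, and $T\subseteq 2^{<\delta}\subseteq 2^{<\omega_1}$. The end-extension property $T_{|\htt(T_\alpha)}=T_\alpha$ is immediate from the fact that $T_\beta$ end-extends $T_\alpha$ for all $\alpha\leq\beta<\gamma$, hence for every $s\in T\setminus T_\alpha$ the domain $\dom(s)\geq \htt(T_\alpha)$. Consequently $T\leq T_\alpha$ for each $\alpha<\gamma$, once we know $T\in\mathbb{P}_{\hom}$.

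The remaining work is to verify that $T$ is a homogeneous normal subtree of $2^{<\omega_1}$. Downward closure is inherited from the $T_\alpha$. For the three normality clauses, I would use the cofinality of $\gamma$: any element $t\in T$ lies in some $T_\alpha$, and for any prescribed $\beta<\htt(T)=\delta$ I may choose $\alpha'\geq\alpha$ with $\htt(T_{\alpha'})>\beta$; then normality of $T_{\alpha'}$ inside $T_{\alpha'}\subseteq T$ directly produces the desired immediate successors of $t$ and the desired $t'\in L_\beta(T_{\alpha'})\subseteq L_\beta(T)$ above $t$. The ``unique upper bound at limit levels'' condition is automatic because for $\alpha<\htt(T)$ we can pick $\alpha'$ with $\alpha<\htt(T_{\alpha'})$, so $L_\alpha(T)=L_\alpha(T_{\alpha'})$ and any branch through $T_{|\alpha}=(T_{\alpha'})_{|\alpha}$ has at most one extension there.

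For homogeneity, take any $a,b\in L_\beta(T)$. Since $\beta<\delta$ there is $\alpha<\gamma$ with $a,b\in L_\beta(T_\alpha)$, and $F_{ab}$ is an automorphism of $T_\alpha$. For arbitrary $s\in T$, I pick $\alpha'\geq\alpha$ with $s\in T_{\alpha'}$; as $T_{\alpha'}$ end-extends $T_\alpha$ we have $a,b\in T_{\alpha'}$, and the homogeneity of $T_{\alpha'}$ yields $F_{ab}(s)\in T_{\alpha'}\subseteq T$. Since $F_{ab}$ is an involution on $2^{<\omega_1}$, the same argument applied to $F_{ab}^{-1}=F_{ab}$ shows $F_{ab}$ maps $T$ bijectively onto $T$; it clearly preserves $\subseteq$, so it is an automorphism of $T$. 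The only conceptual point that needs care, and which I would flag as the main (though still easy) obstacle, is this coherence step: every finite configuration we wish to examine in $T$ already lives inside a single $T_{\alpha'}$, so all structural properties of the $T_\alpha$ transfer to the union.
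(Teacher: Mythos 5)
Your proof is correct and follows the same route as the paper: take the union of the decreasing sequence and check that normality and homogeneity pass to a growing (end-extending) union, the latter because any instance $a,b,s$ already lies in a single $T_{\alpha'}$. You simply spell out the details (arbitrary countable length, the end-extension property, the normality clauses via cofinality of the heights) that the paper's one-line argument leaves implicit.
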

\begin{proof}
	If a decreasing sequence $p_0 \geq p_1 \geq \dots $ is given, then $\cup \{p_i: \ i \in \omega \}$ is a 
	growing union of countable homogeneous normal trees.
	Since it is easy to check that the growing union of normal trees is normal,
	 and homogeneity of a tree $T$ means that
	for $a,b,t \in T$ ($a,b$ are on the same level) $F_{ab}(t) \in T$, we are done.
\end{proof}

At some point we will make use of the following claim.
\begin{cl} \label{segit}
Let $T\subseteq 2^{<\omega_1} $ be a homogeneous tree, $t, t',t'' \in T$, 
and $\alpha = \htt(t) =\htt(t' )< \htt(t'') = \beta$, that is $t$ and $t'$ are on the same level, and $t''$ is on a higher level.
Furthermore, assume that $t' \sq t''$, i.e. $t''$ is an extension of $t'$ as a function.
 Then $t \cup t''\restrict{\beta \setminus \alpha} \in T$, that is,
roughly speaking, $t$ and $t'$ have the same extensions in $T$.
\end{cl}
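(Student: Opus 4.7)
The plan is to use the automorphism $F_{tt'}$ coming from homogeneity. By hypothesis $t, t' \in L_\alpha(T)$ for some fixed countable $\alpha$, so $t, t' \in 2^\alpha$, and homogeneity of $T$ says that the map $F_{tt'}: 2^{<\omega_1} \to 2^{<\omega_1}$ restricts to an automorphism of $T$. In particular, since $t'' \in T$, we have $F_{tt'}(t'') \in T$.

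Next I would simply compute $F_{tt'}(t'')$ pointwise on its domain $\dom(t'')$ and check that it coincides with $t \cup t''_{|\dom(t'')\setminus\dom(t)}$. For $\gamma \geq \alpha$ the definition gives $F_{tt'}(t'')(\gamma) = t''(\gamma)$, so the new function agrees with $t''$ on $[\alpha, \dom(t''))$. For $\gamma < \alpha$, using that $t'' \supseteq t'$ and hence $t''(\gamma) = t'(\gamma)$, we get
\[ F_{tt'}(t'')(\gamma) = t''(\gamma) + t(\gamma) + t'(\gamma) = t'(\gamma) + t(\gamma) + t'(\gamma) \equiv t(\gamma) \pmod 2, \]
so the new function agrees with $t$ on $[0,\alpha)$. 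Combining the two parts yields exactly $t \cup t''_{|\dom(t'')\setminus\dom(t)}$, which is therefore an element of $T$.

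There is essentially no obstacle here beyond unwinding the definitions; the main point to watch is that $F_{tt'}$ is a well-defined automorphism of $T$ (which is built into the definition of homogeneity once we know $t,t'$ lie on the same level), and that the cancellation $t'(\gamma) + t'(\gamma) \equiv 0 \pmod 2$ is what converts the $t'$-initial segment of $t''$ into a $t$-initial segment while leaving the higher coordinates untouched.
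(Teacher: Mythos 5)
Your proposal is correct and is essentially the paper's proof: the paper simply observes that $F_{tt'}(t'') = t \cup t''_{|\dom(t'')\setminus\dom(t)}$ and that this image lies in $T$ by homogeneity, which is exactly the pointwise computation you carried out.
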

\begin{proof}
	It is easy to check that $F_{tt'}(t'') = t \cup t''\restrict{\beta \se \alpha}$.
\end{proof}

	

We will make use of the next lemma which is \cite[VII., Thm. 6.14.]{kunen}
\begin{lemma} \label{ujfv}
	Let $M$ be a c.t.m. Suppose that the cardinal $\lambda$, and the sets $A,B \in M$ ($|A| < \lambda$) are given. Let $\mathbb{P}$ be a $\lambda$-closed notion of forcing, $G \subseteq \mathbb{P}$ be  $\mathbb{P}$-generic over $M$, $f: A \to B$, $f \in M[G]$. Then $f \in M$.
\end{lemma}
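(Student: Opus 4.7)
The plan is to use a density argument, showing that for any $\mathbb{P}$-name $\dot{f}$ and any condition $p$ forcing $\dot{f}:\check{A}\to\check{B}$, the set of conditions below $p$ that force $\dot{f}$ to equal some ground-model function is dense. Since $G$ meets this dense set, the interpretation $f = \dot{f}_G$ will coincide with that ground-model function.

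First I would fix, in $M$, an enumeration $A = \{a_\alpha : \alpha < \mu\}$ where $\mu = |A| < \lambda$, and fix a name $\dot{f} \in M$ and a condition $p \in G$ with $p \Vdash \dot{f} : \check{A} \to \check{B}$. Working in $M$, I would then take an arbitrary $q \leq p$ and build, by recursion on $\alpha < \mu$, a decreasing sequence $\langle q_\alpha : \alpha < \mu \rangle \subseteq \mathbb{P}$ together with values $b_\alpha \in B$ such that $q_{\alpha+1} \leq q_\alpha \leq q$ and $q_{\alpha+1} \Vdash \dot{f}(\check{a}_\alpha) = \check{b}_\alpha$. At successor stages this is possible because $q_\alpha \Vdash \dot{f}(\check{a}_\alpha) \in \check{B}$, so by the usual density of deciding conditions there is some extension of $q_\alpha$ fixing a value in $\check{B}$. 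At limit stages $\beta < \mu$ we have a decreasing sequence $\langle q_\alpha : \alpha < \beta \rangle$ of length $\beta < \lambda$, and $\lambda$-closure yields a lower bound $q_\beta \in \mathbb{P}$. After the recursion I take one more lower bound $q^* \leq q_\alpha$ for all $\alpha < \mu$, again using that $\mu < \lambda$ and $\mathbb{P}$ is $\lambda$-closed.

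Having constructed this sequence in $M$, I define $g \in M$ by $g(a_\alpha) = b_\alpha$; then $g : A \to B$ belongs to $M$, and $q^* \Vdash \dot{f}(\check{a}_\alpha) = \check{g}(\check{a}_\alpha)$ for every $\alpha < \mu$, so $q^* \Vdash \dot{f} = \check{g}$. Thus the set
\[ D = \{ r \leq p : \exists\, g \in M \ (g:A\to B \text{ and } r \Vdash \dot{f} = \check{g}) \} \]
is dense below $p$ in $M$. Since $p \in G$ and $D \in M$, generic filter meets $D$, so there is $r \in G \cap D$ witnessed by some $g \in M$, and then $f = \dot{f}_G = g \in M$.

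The argument is essentially routine once one recognizes that $\lambda$-closure is exactly what is needed to patch together the decisions made for each of the $|A|<\lambda$ many inputs; the only mildly delicate point is doing all the bookkeeping inside $M$ so that the resulting function $g$ is a genuine ground-model object, which is handled by fixing the enumeration of $A$ and the whole recursion in $M$.
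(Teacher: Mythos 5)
Your proof is correct and is exactly the standard argument (the paper itself gives no proof of this lemma, citing it as Kunen VII, Thm.~6.14, whose proof proceeds just as you do): enumerate $A$ in order type $\mu=|A|<\lambda$, build in $M$ a decreasing sequence of conditions deciding $\dot f(\check a_\alpha)$, use $\lambda$-closure at limits and for a final lower bound, and conclude by density that some condition in $G$ forces $\dot f=\check g$ for a ground-model $g$. Nothing is missing; the bookkeeping inside $M$ and the appeal to genericity for sets dense below $p\in G$ are handled correctly.
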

Which has the following straightforward corollary.
\begin{cor}
	If $\beta < \lambda$, then forcing with a $\lambda$-closed notion of forcing adds no new subsets of $\beta$.
\end{cor}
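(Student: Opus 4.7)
The plan is to derive this corollary as a direct application of Lemma \ref{ujfv}. The main observation is that subsets of $\beta$ are in one-to-one correspondence with their characteristic functions, which are functions from $\beta$ into the two-element set $\{0,1\}$, so the statement reduces to a question about functions with small domain.

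Concretely, let $\mathbb{P} \in M$ be $\lambda$-closed, let $G \subseteq \mathbb{P}$ be $\mathbb{P}$-generic over $M$, and let $X \in M[G]$ with $X \subseteq \beta$. I would set $A := \beta$ and $B := 2 = \{0,1\}$, both of which lie in $M$, and let $\chi_X \colon \beta \to 2$ be the characteristic function of $X$, so $\chi_X \in M[G]$. Since $|A| = |\beta| \le \beta < \lambda$ (in $M$), Lemma \ref{ujfv} applies and yields $\chi_X \in M$. But $X$ is definable from $\chi_X$ by $X = \{\gamma < \beta : \chi_X(\gamma) = 1\}$, so $X \in M$ as well.

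There is essentially no obstacle here, since the corollary is a pure reformulation of the lemma for the specific codomain $B = 2$; the only thing worth checking is that $|\beta| < \lambda$ holds in $M$, which is immediate because $\beta < \lambda$ and $\lambda$ is a cardinal of $M$. This gives $\mathcal{P}(\beta)^{M[G]} \subseteq M$, and the reverse inclusion is trivial, so $\mathcal{P}(\beta)^{M[G]} = \mathcal{P}(\beta)^{M}$, which is the content of the corollary.
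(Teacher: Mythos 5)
Your proof is correct and is exactly the argument the paper intends: the corollary is stated as a direct consequence of Lemma \ref{ujfv}, obtained by coding a subset of $\beta$ as its characteristic function $\chi_X\colon\beta\to 2$ and applying the lemma with $A=\beta$, $B=2$. Nothing is missing.
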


The next lemma is a corollary of the proof of Lemma $\ref{ujfv}$. It is folklore. 
(Recall that for each element $x \in M$ there is a canonical $\mathbb{P}$-name 
\begin{equation} \label{checck} \widehat{x} = \{ \widehat{y} \times \{ \mathbb{1}_\mathbb{P} \}  : \ y \in x \} \end{equation}
for which the evaluation  of  $\widehat{x}$ by $G$
\[ \widehat{x}[G] = x \]
 whenever $G \sq \mathbb{P}$ is $\mathbb{P}$-generic over $M$, see \cite[Ch VII, Definition 2.10]{kunen}.) 
\begin{lemma} \label{ujfvneve}
		Let $M$ be a c.t.m. Suppose that the cardinal $\lambda$, and the sets $A,B \in M$ ($|A| < \lambda$) are given. Let $\mathbb{P}$ be a $\lambda$-closed notion of forcing, $p \in \mathbb{P}$, $f$ is a $\mathbb{P}$-name for which
		 \[ p \Vdash f: A \to B \text{ is a function.} \] 
		 Then there is an extension $p' \leq p$, and a function $f_0 \in M$ such that
		 \[ p' \Vdash f = \widehat{f_0}.  \]
\end{lemma}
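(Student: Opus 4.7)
The plan is to build, by transfinite recursion in $M$ of length $|A|$, a decreasing sequence of conditions below $p$ that successively decides the values $f(a)$ for each $a \in A$, and then take a lower bound at the end using $\lambda$-closure.

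First, working in $M$, fix an enumeration $A = \langle a_\xi : \xi < \kappa \rangle$ where $\kappa = |A| < \lambda$. I would define by recursion on $\xi \leq \kappa$ a sequence $\langle p_\xi : \xi \leq \kappa \rangle \subseteq \mathbb{P}$ and a sequence $\langle b_\xi : \xi < \kappa \rangle \subseteq B$ as follows. Set $p_0 = p$. At a successor stage, having defined $p_\xi \leq p$ with $p_\xi \Vdash f : A \to B$, in particular $p_\xi \Vdash f(\check{a_\xi}) \in \check{B}$, so by the definition of forcing there is some $p_{\xi+1} \leq p_\xi$ and some $b_\xi \in B$ with $p_{\xi+1} \Vdash f(\check{a_\xi}) = \check{b_\xi}$; pick such a pair. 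At a limit stage $\eta \leq \kappa$, we have $|\eta| \leq \kappa < \lambda$, so by $\lambda$-closure of $\mathbb{P}$ there is a common lower bound of $\langle p_\xi : \xi < \eta \rangle$, and we let $p_\eta$ be any such lower bound.

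Now set $p' = p_\kappa$, and define $f_0 = \{\langle a_\xi, b_\xi \rangle : \xi < \kappa\}$, which is a function from $A$ to $B$ and lies in $M$ since the whole construction was carried out in $M$. For every $\xi < \kappa$ we have $p' \leq p_{\xi+1}$ and therefore $p' \Vdash f(\check{a_\xi}) = \check{b_\xi} = \check{f_0}(\check{a_\xi})$. Since $p' \Vdash \dom(f) = \check{A}$ (because $p' \leq p$) and $f_0$ and $f$ agree at every element of $\check A$, we conclude $p' \Vdash f = \check{f_0}$.

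The only real issue to be careful about is legality of the recursion in $M$: the successor step uses the axiom of choice in $M$ (to pick the pair $(p_{\xi+1}, b_\xi)$ for each $\xi$), and the limit step uses $\lambda$-closure inside $M$ (since $\lambda$-closure is absolute to $M$); both are fine. I would expect no further obstacle, as the argument is essentially a length-$\kappa$ iteration of the fact that any $\mathbb{P}$-name for an element of a check-set is decided below every condition, combined with the closure hypothesis to keep the sequence of conditions alive through all limit stages $\leq \kappa$.
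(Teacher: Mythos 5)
Your proof is correct and is essentially the paper's argument: the paper simply cites the proof of Lemma \ref{ujfv} (Kunen, VII.6.14), which is exactly this construction — enumerate $A$ in type $|A|<\lambda$, decide $f(\check a_\xi)$ along a decreasing sequence of conditions, use $\lambda$-closure at limit stages and at the end, and read off $f_0$ in $M$. No discrepancies to report.
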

\begin{proof}
	Let $G \subseteq \mathbb{P}$-generic over $M$ with $p \in G$. Then apply Lemma $\ref{ujfv}$, set $f_0 = f[G] \in M$, and choose 
	$p'\in G$, $p' \leq p$ such that $p' \Vdash f = \widehat{f_0}$.
\end{proof}

The lemma has the following straightforward application.
\begin{cor} \label{nincsuj}
	Forcing with a $\lambda$-closed notion of forcing  adds no new sequences of type $\gamma$ (for any $\gamma < \lambda$), that is, if $G$ denotes the generic filter, then
	\[ M^\gamma \cap M[G] = M^\gamma \cap M. \]
\end{cor}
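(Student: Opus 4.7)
The plan is to derive this from Lemma \ref{ujfvneve} by a density argument. Fix $\gamma < \lambda$ and assume $f \in M^\gamma \cap M[G]$; we must show $f \in M$. Let $\dot f \in M$ be a $\mathbb{P}$-name for $f$, and pick $p \in G$ forcing that $\dot f$ is a function with domain $\check\gamma$ whose every value lies in the ground model. Consider the set
\[ D \;=\; \{\, q \leq p : \exists f_0 \in M,\ q \Vdash \dot f = \check{f_0} \,\} \;\in\; M. \]
If $D$ is dense below $p$, then by genericity some $q \in D \cap G$ exists, and hence $f = \dot f^G = f_0 \in M$, as desired.

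To show density below $p$, fix $p' \leq p$ and work inside $M$. Recursively construct a decreasing sequence of conditions $\langle p_\alpha : \alpha \leq \gamma \rangle$ together with elements $\langle x_\alpha : \alpha < \gamma \rangle$, as follows. Set $p_0 = p'$. At a successor stage $\alpha+1$, since $p_\alpha \Vdash \dot f(\check\alpha) \in \check M$, choose an extension $p_{\alpha+1} \leq p_\alpha$ and an $x_\alpha \in M$ with $p_{\alpha+1} \Vdash \dot f(\check\alpha) = \check{x_\alpha}$; any condition deciding the value of $\dot f(\check\alpha)$ provides such a pair. At a limit stage $\delta \leq \gamma$, invoke $\lambda$-closure of $\mathbb{P}$, which applies since $\delta \leq \gamma < \lambda$, to obtain a common lower bound $p_\delta$ of $\langle p_\alpha : \alpha < \delta \rangle$. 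Setting $f_0 = \langle x_\alpha : \alpha < \gamma \rangle$, the condition $p_\gamma \leq p'$ satisfies $p_\gamma \Vdash \dot f = \check{f_0}$.

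The only mildly subtle point is that $f_0$ must actually lie in $M$. This is ensured by carrying out the recursion inside $M$: fix in $M$ a wellordering of $\mathbb{P}$ and of the name-universe, and at each step pick the least pair $(p_{\alpha+1}, x_\alpha)$ (resp.\ the least lower bound at limits) satisfying the requirement. Then the whole construction is definable from parameters in $M$, so $\langle x_\alpha : \alpha < \gamma \rangle \in M$, and with it $f_0 \in M$ and $p_\gamma \in D$. I do not foresee a real obstacle here — the argument is essentially the one already used for Lemma \ref{ujfvneve}, applied levelwise rather than against a fixed codomain $B$.
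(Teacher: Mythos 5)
Your overall skeleton (decide the values one at a time along a decreasing sequence of length $\gamma$, use $\lambda$-closure at limits, do the bookkeeping inside $M$ to keep $f_0\in M$) is the standard recursion, but the proof has a genuine gap at its very first step, and that step is exactly where the content of the corollary lies. You ``pick $p\in G$ forcing that $\dot f$ is a function with domain $\check\gamma$ whose every value lies in the ground model'', and later write $p_\alpha\Vdash \dot f(\check\alpha)\in\check M$. In the framework the paper uses (Kunen's), $\check M$ is not a $\mathbb{P}$-name, since $M\notin M$, and ``$x$ belongs to the ground model'' is not a formula of the forcing language; so the truth lemma does not hand you such a $p\in G$, and the hypothesis $f\in M^\gamma\cap M[G]$ is a fact about this particular generic, not something some condition is known to force. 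Without that, the density of your set $D$ below $p$ (and the successor step of the recursion) is unsupported: below a condition that merely forces ``$\dot f$ is a function on $\check\gamma$'' there may be no extension forcing $\dot f(\check\alpha)$ to equal any check name at all, because $\dot f(\check\alpha)$ may be (forced by some extension to be) a new set. One can in fact make ``$\Vdash\dot f(\check\alpha)$ is in the ground model'' precise (define it as: conditions forcing $\dot f(\check\alpha)=\check x$ for some $x$ are dense below the given condition, and prove the corresponding truth lemma), and then your argument goes through; but that auxiliary machinery is precisely what is missing, and it is not a mere notational matter.

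The intended ``straightforward'' derivation avoids all of this by first trapping the range of $f$ inside a ground-model \emph{set} and then quoting Lemma \ref{ujfv} (or \ref{ujfvneve}) verbatim. Since $M$ and $M[G]$ have the same ordinals, pick an ordinal $\rho\in M$ with $\rho>\mathrm{rank}(f)$; every value $f(\alpha)$ is an element of $M$ of rank $<\rho$, so $\ran(f)\subseteq B$ where $B=(V_\rho)^M\in M$. (Alternatively, let $B$ be the set of all $x$ such that some $q\in\mathbb{P}$ and $\alpha<\gamma$ satisfy $q\Vdash\dot f(\check\alpha)=\check x$; this is a set in $M$ by definability of forcing and replacement, since each pair $(q,\alpha)$ determines at most one such $x$, and a density argument with the legitimate statement $\dot f(\check\alpha)=\check x$ shows $\ran(f)\subseteq B$.) Then $f\colon\gamma\to B$ with $\gamma,B\in M$ and $|\gamma|<\lambda$, so Lemma \ref{ujfv} gives $f\in M$ directly, with no recursion and no density argument needed. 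If you prefer to keep your recursion, you must first justify the existence of $p\in G$ with the dense-decidability property for every $\alpha<\gamma$ simultaneously; replacing $\check M$ by $\check B$ as above is the cheapest way to do that.
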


For some technical reasons we will later
use the following definition and lemma.
\bd
	If $M$ is a c.t.m., $\mathbb{P} \in M$ is a notion of forcing and $\sigma, \tau \in M$ are $\mathbb{P}$-names, then $\sigma$ is a nice $\mathbb{P}$-name for a subset of $\tau$ if $\sigma$ is of the form
	\[ \sigma = \cup \{ \{ \pi \} \times A_\pi : \ \pi \in \dom(\tau), \ A_\pi \subseteq \mathbb{P} \text{ is an antichain} \}.  \]
	
\ed
The next lemma is  \cite[Ch VII, Lemma 5.12]{kunen}.
\begin{lemma} \label{nice}
	Suppose that $M$ is a c.t.m., $\mathbb{P} \in M$ is a notion of forcing,  $\tau, \mu \in M$ are $\mathbb{P}$-names. Then there is a nice name $\sigma$ for a subset of $\tau$ such that
	\[ 1 \Vdash  (\mu \subseteq \tau) \rightarrow (\mu = \sigma) . \]
\end{lemma}

The following lemma is folklore, but for the sake of completeness we include the proof.
\begin{lemma} \label{hatvanyh}
	Let $M$ be a c.t.m., $\lambda$, $\varrho$ be cardinals in $M$,  $\mathbb{P} \in M$ be a notion of forcing which is $\lambda$-cc.
	Then, whenever $G \subseteq \mathbb{P}$ is generic over $M$, and $\nu$ is such that
	\[ M \models \ \nu = (|\mathbb{P}|^{<\lambda})^\varrho, \]
	then
	\[ M[G] \models \ 2^\varrho \leq \nu. \]
\end{lemma}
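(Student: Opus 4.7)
The plan is to use the standard nice-name counting argument. Recall that a \emph{nice name} for a subset of $\varrho$ is a $\mathbb{P}$-name of the form
\[ \tau = \bigcup_{\alpha < \varrho} \{\check{\alpha}\} \times A_\alpha, \]
where each $A_\alpha \subseteq \mathbb{P}$ is an antichain. The first standard ingredient is that every subset $X \in M[G]$ of $\varrho$ has a nice name in $M$: given any name $\sigma$ with $X = \sigma_G \subseteq \varrho$, one constructs $\tau$ by letting $A_\alpha$ be a maximal antichain among $\{p \in \mathbb{P} : p \Vdash \check{\alpha} \in \sigma\}$, and verifies $\tau_G = X$. This step takes place in $M$ using the Axiom of Choice there.

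Next I would count the nice names in $M$. Since $\mathbb{P}$ is $\lambda$-cc in $M$, every antichain $A_\alpha$ lies in $[\mathbb{P}]^{<\lambda}$, so the set of possible $A_\alpha$ has cardinality at most $|[\mathbb{P}]^{<\lambda}|^M \leq (|\mathbb{P}|^{<\lambda})^M$ (a subset of size $<\lambda$ is coded by a function from some $\mu < \lambda$ into $\mathbb{P}$). Therefore the total number of nice names is at most
\[ \bigl((|\mathbb{P}|^{<\lambda})^\varrho\bigr)^M = \nu, \]
computed in $M$. Since the map sending a nice name $\tau \in M$ to its evaluation $\tau_G \in M[G]$ is surjective onto $\mathcal{P}(\varrho)^{M[G]}$, and this surjection can be witnessed by an injection $\mathcal{P}(\varrho)^{M[G]} \hookrightarrow \nu$ in $M[G]$ (choose a name for each subset), we obtain $M[G] \models 2^\varrho \leq \nu$.

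The only minor subtlety — and the only place where care is needed — is to be sure that the cardinal $\nu$ computed in $M$ still bounds $2^\varrho$ as interpreted in $M[G]$. This follows because $\nu$ is an ordinal and injections from $M$ remain injections in $M[G]$; no preservation of cardinalities of $\nu$ itself is required for the statement, which only claims an inequality of cardinalities in $M[G]$. With this observation the argument is complete, and no further use of $\lambda$-cc is needed beyond bounding antichain sizes.
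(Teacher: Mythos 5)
Your proof is correct and is exactly the standard nice-name counting argument that the paper has in mind: the paper states this lemma as folklore without proof (its Lemma \ref{nice}, quoted from Kunen, is the nice-name existence step you use), and your count of antichain sequences bounded by $(|\mathbb{P}|^{<\lambda})^\varrho$ together with the surjection-to-injection step in $M[G]$ is the intended argument. No gaps.
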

\begin{proof}
	By $\lambda$-cc, there are at most $|\mathbb{P}|^{<\lambda}$-many antichains in $\mathbb{P}$. This implies that since $\dom(\widehat{\varrho})= \{ \widehat{\alpha} \ : \alpha < \varrho \}$, there are at most $(|\mathbb{P}|^{<\lambda})^\varrho = \nu$-many nice names for subsets of $\widehat{\varrho}$ in $M$. Let $C \in M$ denote the set of nice names for subsets of $\widehat{\varrho}$ (where $|C| \leq \nu$).
	By Lemma $\ref{nice}$ each subset $S \in \mathcal{P}(\varrho) \cap M[G]$ is represented by a nice name $\tau \in C$.
\end{proof}

The following lemma can be found as \cite[Ch. VII., Lemma 6.9]{kunen}
\begin{lemma} \label{cc-cf}
	Let $\lambda$ be a cardinal in a c.t.m. $M$, and $\mathbb{P}$ be a poset which is $\lambda$-cc in $M$. Then forcing with $\mathbb{P}$
	preserves cofinalities $\geq \lambda$, i.e. if 
	\[ \cf^M(\alpha) \geq \lambda, \]
	then whenever $G$ is $\mathbb{P}$-generic over $M$,
	\[ \cf^M(\alpha) = \cf^{M[G]}(\alpha), \]
	in particular if $\lambda$ is regular in $M$, then $\mathbb{P}$ preserves cardinals $\geq \lambda$.
\end{lemma}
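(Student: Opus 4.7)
The plan is to argue by contradiction: suppose $\cf^{M[G]}(\alpha) < \cf^M(\alpha)$, and extract from a short cofinal sequence in $M[G]$ a cofinal subset of $\alpha$ that already lives in $M$ and is too small, contradicting $\cf^M(\alpha)$. Note first the trivial direction: any cofinal sequence in $\alpha$ from $M$ remains cofinal in $M[G]$, so $\cf^{M[G]}(\alpha) \leq \cf^M(\alpha)$.

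Now suppose, towards a contradiction, that in $M[G]$ there is a cofinal function $f : \mu \to \alpha$ with $\mu < \kappa := \cf^M(\alpha)$. Let $\dot{f}$ be a $\mathbb{P}$-name and $p_0 \in G$ a condition forcing that $\dot{f}$ is such a cofinal function. Working in $M$, for each $\gamma < \mu$ pick a maximal antichain $\mathcal{A}_\gamma$ of conditions $p \leq p_0$ that decide $\dot{f}(\check\gamma)$, and set
\[ A_\gamma \;=\; \{ \delta < \alpha : \exists p \in \mathcal{A}_\gamma,\ p \Vdash \dot{f}(\check\gamma) = \check\delta \}. \]
By $\lambda$-cc we have $|\mathcal{A}_\gamma| < \lambda$, hence $|A_\gamma| < \lambda$, and by genericity some $p \in \mathcal{A}_\gamma$ lies in $G$, so $f(\gamma) \in A_\gamma$.

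Let $A = \bigcup_{\gamma < \mu} A_\gamma \in M$. Then $f[\mu] \subseteq A$, so $A$ is a cofinal subset of $\alpha$ in $M[G]$, and also in $M$ (cofinality is absolute between transitive models containing the set). The size of $A$ in $M$ is bounded by $\sum_{\gamma < \mu} |A_\gamma|$. If $\lambda < \kappa$, this sum is at most $\mu \cdot \lambda = \max(\mu, \lambda) < \kappa$ since both $\mu, \lambda < \kappa$. If $\lambda = \kappa$, then $\kappa$ is regular (cofinalities are always regular), $\mu < \kappa$, and each summand is $< \kappa$, so the sum is $< \kappa$ by regularity. Either way $|A|^M < \kappa = \cf^M(\alpha)$, contradicting that $A$ is cofinal in $\alpha$. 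The last sentence of the lemma (preservation of cardinals $\geq \lambda$ when $\lambda$ is regular) is immediate: a regular cardinal $\kappa \geq \lambda$ has $\cf^M(\kappa) = \kappa \geq \lambda$, so is preserved as a cofinality, forcing $\kappa$ to remain a cardinal.

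The only subtle point, and the main thing to be careful about, is the case $\lambda = \cf^M(\alpha)$, where the naive estimate $\mu \cdot \lambda$ fails to be strictly below $\kappa$; this is precisely where one exploits the regularity of $\cf^M(\alpha)$ to conclude that a $\mu$-sized union of sets of size $< \lambda$ still has size $< \lambda$. Everything else is bookkeeping around maximal antichains and the use of $\lambda$-cc.
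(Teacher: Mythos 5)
Your argument for the main claim is correct and is essentially the standard one: the paper gives no proof of this lemma but cites it as Kunen, Ch.~VII, Lemma~6.9, whose proof is exactly your covering argument (a maximal antichain of conditions deciding $\dot f(\check\gamma)$ has size $<\lambda$ by the $\lambda$-cc, so the possible values form a ground-model set $A_\gamma$ of size $<\lambda$, and the union over $\gamma<\mu$ is a ground-model cofinal subset of $\alpha$ of size $<\cf^M(\alpha)$, a contradiction). Your case split $\lambda<\kappa$ versus $\lambda=\kappa$, using the regularity of $\cf^M(\alpha)$, is exactly the right bookkeeping.

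One small gap in the final sentence: preserving cofinalities only immediately gives preservation of \emph{regular} cardinals $\geq\lambda$, while the lemma asserts preservation of all cardinals $\geq\lambda$. You must also handle singular cardinals $\kappa\geq\lambda$: since $\lambda$ is regular, such a $\kappa$ is a limit cardinal $>\lambda$, hence the supremum of the successor cardinals $\mu^+$ with $\lambda\leq\mu^+<\kappa$; these are regular, hence remain cardinals in $M[G]$, and a supremum of cardinals is a cardinal, so $\kappa$ remains a cardinal as well. With that one extra line the proof is complete.
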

The following well-known fact can be found as \cite[Ch. VIII. Lemma 3.4]{kunen}.
\begin{lemma} \label{nemadujagat} Suppose  that $T \in M$ is an $\omega_1$-tree, $\mathbb{P}$ is $\omega_1$-closed. Then forcing with $\mathbb{P}$ does not add any new branch to $T$.
\end{lemma}

The next technical lemma will be later needed \cite[Ch. VII. Lemma 7.11]{kunen}.
\begin{lemma} \label{surube}
	Let the poset $\mathbb{P} \in M$ given, and suppose that $\mathbb{P}' \in M$ is a dense subset of $\mathbb{P}$. Then
	\begin{itemize}
		\item if $G \subseteq \mathbb{P}$ is generic over $M$, then the intersection $G' = G \cap \mathbb{P}'$ is $\mathbb{P}'$-generic over $M$,
		\[ M[G] = M[G'],\]
		moreover,  $G = \{ p \in \mathbb{P}: \ \exists q \in G' \ q \leq p \}$,
		\item if $G' \subseteq \mathbb{P}'$ is generic over $M$, then  the filter $G = \{ p \in \mathbb{P}: \ \exists q \in G' \ q \leq p \}$ is $\mathbb{P}$-generic over $M$, 
			\[ M[G'] = M[G],\]
			moreover,  $G' = G \cap \mathbb{P}'$.
	\end{itemize}
\end{lemma}

\section{The main result}

In this section we will prove the following theorem. We will make use of some ideas from \cite{Jin}, where the authors proved among others, that it is consistent with $ZFC$ that $2^{\omega_1} = \omega_4$, and Kurepa trees only with $\omega_3$-many branches exist.

Our main forcing object will be a two-step forcing iteration where we can
isolate a dense closed subset. The first paper in which such argument arose was \cite{satur} (see the end of Section 3 of
that paper. In fact, the forcings used by Kunen are essentially versions of the forcings used in this paper.)

\bd \label{closedunder}
	A set of ordinals $E$ is closed under taking $\varrho$-limits, iff whenever $\delta$ is an ordinal such that $\cf(\delta) = \varrho$ and $E \cap \delta$ is cofinal in $\delta$, then $\delta \in E$.
\ed

\begin{theorem} \label{foo}
	Let $M$ be a c.t.m. of $ZFC + GCH$, and let $C \in M$ be a set of ordinals such that $0,1 \notin C$, and the following holds (in $M$).
	\begin{equation}  \label{cofos} \begin{array}{c} C \text{ is closed under }\omega\text{-, and }\omega_1\text{-limits, and} \\
	\forall \delta \in C:  \ \ \omega \leq \cf(\delta) \leq \omega_1 \ \text{ implies } \delta+1 \in C, 

	\end{array} \end{equation}
	If $2 \notin C$, then further assume that 
	\begin{equation} \label{limzart}
	\begin{array}{c}
	\text{ there is an inaccessible cardinal }\kappa \in M, \text{moreover} \\
				C \text{ is closed under }<\kappa \text{-limits, and} \\
				\forall \delta \in C: \ \ \omega \leq \cf(\delta) <\kappa \ \text{ implies } \delta+1 \in C,
				\end{array} 
				\end{equation}
	Then there is notion of forcing $\mathbb{S} \in M$ such that whenever $G \subseteq \mathbb{S}$ is $\mathbb{S}$-generic over $M$, then
	\[ M[G] \models  C = \{ \alpha: \ \text{ there exists a Kurepa tree } T \text{ such that } |\mathcal{B}(T)| = \omega_\alpha \} \]
\end{theorem}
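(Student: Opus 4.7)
The plan is to construct $\mathbb{R}$ as a three-stage iterated forcing. Stage I prepares the ground model: if $2\in C$ we do nothing, but if $2\notin C$ we Levy-collapse the inaccessible $\kappa$ of $M$ down to $\omega_2$. This step kills the Kurepa trees of $M$ and, by the usual Silver-type argument, ensures that no Kurepa tree in any further extension can have as few as $\omega_2$ many branches; this is exactly where the hypothesis \eqref{limzart} is consumed, and it explains why inaccessibility is required precisely in the case $2\notin C$. Stage II forces with $\mathbb{P}_{\hom}$ to adjoin a generic homogeneous normal subtree $T^*\subseteq 2^{<\omega_1}$ of height $\omega_1$ with countable levels; by Lemma \ref{omclos} and Corollary \ref{nincsuj}, $\omega_1$ and all countable sequences are preserved, and $T^*$ will serve as a backbone carrying each of the Kurepa trees we want.

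Stage III is then a countable support product $\mathbb{Q}=\prod_{\alpha\in C}\mathbb{Q}_\alpha$, where $\mathbb{Q}_\alpha$ operates on a fresh isomorphic copy $T_\alpha$ of $T^*$ and generically seals a family $\mathcal{B}_\alpha$ of exactly $\omega_\alpha$ cofinal branches of $T_\alpha$, producing a Kurepa tree with $|\mathcal{B}(T_\alpha)|=\omega_\alpha$. I would arrange each $\mathbb{Q}_\alpha$ to be $\omega_1$-closed by countable approximations and $\omega_2$-cc via a $\Delta$-system argument exploiting the countable levels of $T_\alpha$ and Claim \ref{segit}; under GCH above $\omega_1$ (which can be forced in a preliminary auxiliary step) standard preservation arguments then show that the countable-support product retains both properties.

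Verification splits into three parts. Cardinals $\geq\omega_2$ are preserved by $\omega_2$-cc (Lemma \ref{cc-cf}) and $\omega_1$ by $\omega_1$-closure, so every $\omega_\alpha$ of the intermediate model is the $\omega_\alpha$ of $M[G]$. Each $T_\alpha$ remains a Kurepa tree in $M[G]$ with exactly $\omega_\alpha$ branches, because no further stage of the iteration can add branches to an $\omega_1$-tree with countable levels (the Kunen lemma quoted just before Lemma \ref{ujfv}). The third and main point is that no Kurepa tree in $M[G]$ can have branch cardinality outside $\{\omega_\alpha : \alpha\in C\}$.

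The main obstacle is exactly this last point. I would attack it by a reflection-plus-homogeneity argument: given an arbitrary Kurepa tree $T$ in $M[G]$ with $|\mathcal{B}(T)|=\mu$, since $|T|=\omega_1$ and the product is $\omega_1$-closed, Lemma \ref{ujfvneve} lets me realize a name for $T$ already inside an intermediate extension by a sub-product of $\mathbb{Q}$ indexed by some $A\subseteq C$ of size $\leq\omega_1$. A branch-counting argument that exploits the homogeneity transferred from $T^*$ through Claim \ref{segit} shows that $\mathcal{B}(T)$ decomposes along these coordinates into pieces whose sizes are among the $\omega_\alpha$ for $\alpha\in A$, so that $\mu$ is either some such $\omega_\alpha$ or a cardinal limit or successor of such values. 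The closure hypotheses \eqref{cofos} and (when $2\notin C$) \eqref{limzart} on $C$ are tailored exactly to this situation: $\omega$- and $\omega_1$-limit closure handle $\mu$ arising as a limit of those cofinalities, the successor-of-cofinality-$\leq\omega_1$ clause handles the ``one extra branch'' case, and Stage I precludes $\mu=\omega_2$ whenever $2\notin C$.
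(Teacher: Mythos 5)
Your Stage II/III architecture has a structural flaw that the paper's construction is specifically designed to avoid: you force a \emph{single} generic homogeneous tree $T^*$ and then let each $\mathbb{Q}_\alpha$ act on ``a fresh isomorphic copy $T_\alpha$ of $T^*$''. If the copies are isomorphic to $T^*$ via isomorphisms present in the model, then $|\mathcal{B}(T_\alpha)|=|\mathcal{B}(T_\beta)|$ for all $\alpha,\beta\in C$ in the final model, so all your witnesses have the same number of branches and the spectrum cannot be $C$. The paper instead takes one \emph{mutually generic} tree per $\alpha$: each coordinate of the product is a two-step iteration $\mathbb{R}_\alpha=\mathbb{P}_\alpha\ast\mathbb{Q}_\alpha$, where $\mathbb{P}_\alpha$ is its own copy of $\mathbb{P}_{\hom}$. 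This interleaving is not cosmetic: over a model in which $T_\alpha$ is already a fixed tree with countable levels, the branch-adding forcing with countable conditions is \emph{not} $\omega_1$-closed (the pointwise union of a decreasing $\omega$-chain lands at a limit level whose node need not lie in $T_\alpha$), so your claim that each $\mathbb{Q}_\alpha$ is ``$\omega_1$-closed by countable approximations'' fails; in the paper the lower bound is obtained by simultaneously extending the tree part (Lemma \ref{lezar}, Corollary \ref{zrtsgfele}), and homogeneity is exactly what lets one close off the needed limit nodes. Relatedly, your assertion that $\mathbb{Q}_\alpha$ ``seals a family of exactly $\omega_\alpha$ branches'' is unsupported: that the resulting tree has \emph{no more} than $|X_\alpha|$ branches is a genuine theorem (Lemma \ref{legtech}: every branch in the extension agrees, above some countable level, with a finite mod-$2$ sum of the generic branches), proved by a fusion argument using homogeneity. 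Likewise, citing the Kunen lemma to say ``no further stage adds branches to an $\omega_1$-tree with countable levels'' cannot be used as a blanket principle here --- the branch-adding factors themselves add branches to such trees --- and the paper instead proves a dedicated preservation lemma for the specific product (Lemma \ref{nincsuj3}) by a splitting/fusion argument, plus $\omega_1$-closedness for the Levy part.

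The second, and larger, gap is the hard direction: that no Kurepa tree of $M[G]$ has $\omega_\alpha$ branches for $\alpha\notin C$. Your sketch (``$\mathcal{B}(T)$ decomposes along coordinates into pieces whose sizes are among the $\omega_\alpha$'') names the desired conclusion, not an argument, and the tool you invoke (Lemma \ref{ujfvneve}) does not apply since the full product is not $\omega_2$-closed. The paper's mechanism is: capture an arbitrary $T$ by a nice name using the $\omega_2$-cc of $\mathbb{R}_C$ (and $\kappa$-cc of $\mathbb{L}$), restrict to a subproduct $\mathbb{R}_S$ with $|S|\le\omega_1$, refactor it as $\mathbb{P}_S\ast\mathbb{Q}_S$, capture $T$ using only $\omega_1$-many branch indices $Z$, and then compute (using GCH above $\omega_2^{M[G]}$, and inaccessibility of $\kappa$ when $2\notin C$) that $2^{\omega_1}<\omega_\alpha^{M[G]}$ in the intermediate model $N$; finally, the homogeneity of the $T_\delta$'s yields an automorphism argument showing that the remaining coordinates force \emph{new} branches of $T$ over $N$, and since disjoint copies $\mathbb{Q}_{|X''}\simeq\mathbb{Q}_{|X'''}$ keep doing so, the branch count is pushed up to $\omega_\beta$ for $\beta=\min(C\setminus\alpha)$ --- which is where \eqref{cofos} and \eqref{limzart} actually enter (via the supremum $\sigma$ of $S\cap\alpha$ lying in $C$). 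None of this machinery, in particular the cardinal-arithmetic bound in the intermediate model and the automorphism step transferring new branches of the \emph{arbitrary} tree $T$, appears in your proposal, and your Stage I remark that a ``usual Silver-type argument'' already excludes $\omega_2$ many branches in all further extensions is not a substitute for it.
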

\begin{rem}
	If $2 \notin C$, then in the final model $\kappa$ will be $\omega_2$, thus $\eqref{limzart}$ is requiring condition $\eqref{cofos}$ to be true in \emph{the final model}.
\end{rem}

First we define $\mathbb{S}$. We will work in $M$.

If $2 \notin C$, i.e. we would like to obtain a final model in which every Kurepa tree has more than $\omega_2$ branches,
then define $\mathbb{L}$ to be the following Lévy collapse
\begin{equation} \label{Levy} \begin{array}{l} \mathbb{L} = Lv(\kappa,\omega_1) = \\
= \{ f: \ \dom(f) \subseteq \kappa \times \omega_1, \ \dom(f)\in [\lambda \times \kappa]^{<\omega_1}, \ f(\lambda, \alpha) < \lambda \ \ (\forall \ \langle \lambda, \alpha \rangle \in  \kappa \times \omega_1)  \}. \end{array} \end{equation}

Let 
\beeq \label{Pdef} \mathbb{P}_\alpha = \mathbb{P}_{\hom} \text{ for each }\alpha \in C, \eeq 
and let $\name{T_\alpha}$ be the $\mathbb{P}_\alpha$-name of 
the generic tree. 

We have two distinct cases depending on whether $2 \in C$.
We will need the following sets defined for each ordinal in $C$.
\bd \label{Xhalmazok}
	Let the system
	\[ \langle X_\alpha: \ \alpha \in C\rangle \ \ (X_\alpha \cap X_\beta = \es,  \ \alpha \neq \beta \in C)\] 
	of pairwise disjoint sets such that
\begin{itemize}
	\item If $2 \in C$, then
	\[ |X_\alpha|= \omega_\alpha, \]

	\item otherwise, if $2 \notin C$ then 
	\[  |X_\alpha| = \left\{ \begin{array}{ll} \omega_{\kappa + \alpha-2},  & \text{ if } \alpha < \omega, \\ 
							\omega_{\kappa + \alpha}, &  \text{ if } \alpha \geq \omega. \end{array} \right. \]
	Observe that if one collapses each cardinal greater than $\omega_1$ and less than $\omega_\kappa = \kappa$ (where each other cardinal remains a cardinal), then in that model $|X_\alpha| = \omega_\alpha$.
\end{itemize}
\ed

\bd \label{Qdef}
Let $\name{\mathbb{Q}_{\alpha}}$, $\name{\mathbb{1}_{\mathbb{Q}_{\alpha}}}$ , $\name{\leq_{\mathbb{Q}_{\alpha}}}$ be $\mathbb{P}_{\alpha}$-names in $M$ for which
\begin{equation} \label{ite1}  \mathbb{1}_{\mathbb{P}_{\alpha}} \Vdash \ \name{\mathbb{Q}_{\alpha}} =  \{ f:  \ \dom(f)\in  [\widehat{X_{\alpha}}]^{<\omega_1}, \ \ran{f}\subseteq \name{T_\alpha}  \},   \end{equation}
with the pointwise extension order, i.e.
\begin{equation} \label{ite2} \mathbb{1}_{\mathbb{P}_{\alpha}} \Vdash \ \name{\leq_{\mathbb{Q}_{\alpha}}} = \{ \langle f,g \rangle : \ \forall x \in \dom(g): \ (x \in \dom(f) \wedge f(x) \supseteq g(x)) \}, \end{equation}
and a name for the greatest element $\name{\mathbb{1}_{\mathbb{Q}_{\alpha}}}$
\begin{equation} \label{ite3}  \mathbb{1}_{\mathbb{P}_{\alpha}} \Vdash \ \name{\mathbb{1}_{\mathbb{Q}_{\alpha}}} \in \name{\mathbb{Q}_{\alpha}} \text{ is the empty function, i.e. the empty set.} \end{equation} 
\ed

Such names exist by the maximal principle \cite[II., Thm. 8.2]{kunen}.
Now after one adds a $\mathbb{P}_{\alpha}$-generic filter $F$ over $M$, $\name{\mathbb{Q}_{\alpha}}$ will be decoded into a partial order with the reverse inclusion relation, with the largest element $\name{\mathbb{1}_{\mathbb{Q}_{\alpha}}}[F]$.
\br  \label{Q1}
After replacing $\name{\mathbb{Q}_{\alpha}}$ by $\name{\mathbb{Q}_{\alpha}} \cup \{ \langle \emptyset, \mathbb{1}_{\mathbb{P}_{\alpha}} \rangle \}$ (if needed) we can assume that
\begin{equation} \label{1Q}
 \name{\mathbb{1}_{\mathbb{Q}_{\alpha}}} = \emptyset = \widehat{\emptyset},
\end{equation}
and
\begin{equation} \label{1Qv}
 \langle \name{ \mathbb{1}_{\mathbb{Q}_{\alpha}}},  \mathbb{1}_{\mathbb{P}_{\alpha}} \rangle =  \langle \emptyset, \mathbb{1}_{\mathbb{P}_{\alpha}} \rangle \in \name{\mathbb{Q}_{\alpha}}
\end{equation}
\er

Now we define $\mathbb{R}_{\alpha}$-s ($\alpha \in C$) to be the following two step iterations as in \cite[Ch VIII., §5.]{kunen}.
\bd \label{Rde}
\begin{equation} \label{iter} \mathbb{R}_{\alpha}= \mathbb{P}_{\alpha} \ast \name{\mathbb{Q}_{\alpha}} = \{ \langle p , \name{q} \rangle : \ p \in \mathbb{P}_{\alpha}, \name{q} \in \dom(\name{\mathbb{Q}_{\alpha}}), \ p \Vdash_{\mathbb{P}_{\alpha}} \name{q} \in \name{\mathbb{Q}_{\alpha}} \} ,\end{equation}
\ed
which is a notion of forcing with the following partial order
\[ \langle p_1,\name{q_1} \rangle \leq \langle p_2,\name{q_2} \rangle  \ \iff \ p_1 \leq p_2 \ \wedge \ p_1 \Vdash_{\mathbb{P}_{\alpha}} \name{q_1} \leq \name{q_2}, \]
and a (not necessarily unique) greatest element
\begin{equation} \label{1R} \mathbb{1}_{\mathbb{R}_{\alpha}} = \langle \mathbb{1}_{\mathbb{P}_{\alpha}}, \name{\mathbb{1}_{\mathbb{Q}_{\alpha}}} \rangle  \end{equation}

\bd \label{R^E}
For our fixed set $C$ let $\mathbb{R}$ be the following countably supported product 
\begin{equation} \label{Rdef} \mathbb{R} = \{ r \in \prod_{\alpha \in C} \mathbb{R}_{\alpha}: |\supp(r)|<\omega_1 \},
\end{equation}
(where by $\supp(r)$ we mean the set $\{ \alpha \in C: \ r_\alpha \neq \mathbb{1}_{\mathbb{R}_{\alpha}} \}$)
which is a partial order with the product ordering, i.e.
\[ r_0 \leq r_1 \ \iff \  \forall \alpha \in C \ \ (r_0)_\alpha \leq (r_1)_\alpha .\]
For a set $E \subseteq  C$ define
\beeq  \label{RC}
\mathbb{R}\restrict{E} = \{ r\restrict{E}: \ r \in \mathbb{R} \}.
\eeq
\ed
Clearly for any partition $E_1$, $E_2$ of $C$, 
\[ \mathbb{R} \simeq \mathbb{R}\restrict{E_1} \times  \mathbb{R}\restrict{E_2}. \]


Now we can define $\mathbb{S}$.
\bd
\begin{equation} \label{Rdf}
 \mathbb{S} = \left\{ \begin{array}{ll} \mathbb{L} \times \mathbb{R}, & \text{ if } 2 \notin C, \\
	   \mathbb{R} & \text{ if } 2 \in C.
  \end{array} \right.
 \end{equation}
 (Where by $\mathbb{L} \times \mathbb{R}$ we mean the product the partial order, i.e. $\langle l_1,r_1 \rangle \leq \langle l_2,r_2\rangle$ iff $l_1 \leq l_2$ and $r_1 \leq r_2$.)
\ed
If $r \in \mathbb{R}$  and $\alpha \in C$, then $\pr_\alpha(r) \in \mathbb{R}_{\alpha}$ denotes its projection onto its $\alpha$-th coordinate.

From now on we fix an $\mathbb{S}$-generic filter $G$ over $M$. For any 
$\alpha \in C$ set $G_{\alpha}$ to be $G$-s projection onto $\mathbb{R}_{\alpha}$.
Similarly, for any set $E \subseteq C$
let $G\restrict{E} \subseteq \mathbb{R}\restrict{E}$ denote $G$-s projection onto $\mathbb{R}\restrict{E}$. The following
lemma \cite[Ch VIII., Lemma 1.3]{kunen} guarantees that $G\restrict{E}$ is $\mathbb{R}\restrict{E}$-generic over $M$.
\begin{lemma} \label{szorz1}
	Let $\mathbb{P}_0 \times \mathbb{P}_1$ be a product partial order, and fix a filter $G$ which is $\mathbb{P}_0 \times \mathbb{P}_1$-generic over $M$. Then $\pr_0(G) \subseteq \mathbb{P}_0$ is $\mathbb{P}_0$-generic over $M$, $\pr_1(G) \subseteq \mathbb{P}_1$ is $\mathbb{P}_1$-generic over $M$, and $G = \pr_0(G) \times \pr_1(G)$.
\end{lemma}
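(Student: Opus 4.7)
The plan is to verify four claims in turn: that $\pr_0(G)$ is a filter on $\mathbb{P}_0$, that it is generic over $M$, that the symmetric statements hold for $\pr_1(G)$, and finally that $G$ factors as the product $\pr_0(G) \times \pr_1(G)$. Throughout the argument I would work with the explicit description $\pr_0(G) = \{ p \in \mathbb{P}_0 : \exists q \in \mathbb{P}_1 \ \langle p, q \rangle \in G \}$, and exploit that $\langle p, q \rangle \leq \langle p', q' \rangle$ in the product iff $p \leq p'$ and $q \leq q'$, so that refinement in either coordinate comes from refinement in $G$.

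First I would check that $\pr_0(G)$ is a filter. Upward closure is immediate: if $p \in \pr_0(G)$ is witnessed by $\langle p, q \rangle \in G$, and $p' \geq p$ in $\mathbb{P}_0$, then $\langle p, q \rangle \leq \langle p', q \rangle$, so $\langle p', q \rangle \in G$ by upward closure of $G$, whence $p' \in \pr_0(G)$. For downward directedness, given $p_1, p_2 \in \pr_0(G)$ with witnesses $\langle p_1, q_1 \rangle, \langle p_2, q_2 \rangle \in G$, there is a common extension $\langle p, q \rangle \in G$ in the product, and then $p \leq p_1, p_2$ gives a common refinement in $\pr_0(G)$.

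For genericity, the key observation is that if $D \subseteq \mathbb{P}_0$ is dense in $M$, then $D \times \mathbb{P}_1 \in M$ is dense in $\mathbb{P}_0 \times \mathbb{P}_1$: given any $\langle p, q \rangle$, pick $p' \leq p$ with $p' \in D$ and observe $\langle p', q \rangle \leq \langle p, q \rangle$ with $\langle p', q \rangle \in D \times \mathbb{P}_1$. Since $G$ meets $D \times \mathbb{P}_1$, its projection $\pr_0(G)$ meets $D$. The argument for $\pr_1(G)$ is symmetric.

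Finally, for $G = \pr_0(G) \times \pr_1(G)$, the inclusion $\subseteq$ is obvious from the definition of projection. For the reverse, suppose $p \in \pr_0(G)$ and $q \in \pr_1(G)$, witnessed respectively by $\langle p, q' \rangle, \langle p', q \rangle \in G$. By directedness of $G$, there is $\langle p'', q'' \rangle \in G$ extending both, so $p'' \leq p$ and $q'' \leq q$; hence $\langle p'', q'' \rangle \leq \langle p, q \rangle$, and upward closure gives $\langle p, q \rangle \in G$. The only subtlety, and the one place where product structure really matters, is in ensuring that $D \times \mathbb{P}_1 \in M$: this uses that $\mathbb{P}_0, \mathbb{P}_1 \in M$ so the product operation lives in $M$, and everything else is bookkeeping about the product order.
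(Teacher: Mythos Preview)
Your proof is correct and is essentially the standard textbook argument. The paper does not give its own proof of this lemma at all: it is stated without proof and attributed to Kunen \cite[Ch.~VIII, Lemma~1.3]{kunen}, so there is nothing to compare against beyond noting that your argument is the expected one.
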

We will make use of the following too \cite[Ch VIII., Thm. 1.4]{kunen}.
\begin{lemma} \label{szorz2}
	Let $G_0 \subseteq \mathbb{P}_0$, $G_1 \subseteq \mathbb{P}_1$ be filters, then the following three conditions are equivalent.
	\begin{enumerate}[(1)]
		\item \label{el} $G_0 \times G_1$ is $\mathbb{P}_0 \times \mathbb{P}_1$-generic over $M$,
		\item $G_0$ is $\mathbb{P}_0$-generic over $M$, and $G_1$ is $\mathbb{P}_1$-generic over $M[G_0]$,
		\item  \label{ut} $G_1$ is $\mathbb{P}_1$-generic over $M$, and $G_0$ is $\mathbb{P}_0$-generic over $M[G_1]$.
	\end{enumerate}
    Furthermore if $\eqref{el}-\eqref{ut}$ holds, then 
    \[ M[G_0 \times G_1]  = M[G_0][G_1] = M[G_1][G_0] .\]
	
\end{lemma}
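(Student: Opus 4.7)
The plan is to establish the cycle $(1) \Rightarrow (2) \Rightarrow (1)$, note that $(1) \Rightarrow (3) \Rightarrow (1)$ is entirely symmetric (swapping the roles of $\mathbb{P}_0$ and $\mathbb{P}_1$), and then verify the model equalities at the end. The core technical ingredient is the standard correspondence between $\mathbb{P}_0$-names for subsets of $\mathbb{P}_1$ and dense subsets of the product.

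For $(1) \Rightarrow (2)$: the first conjunct is immediate from Lemma \ref{szorz1}. For the second, fix a set $D \in M[G_0]$ that $M[G_0]$ believes is dense in $\mathbb{P}_1$, and pick a $\mathbb{P}_0$-name $\dot D \in M$ and a condition $p_0 \in G_0$ forcing "$\dot D$ is dense in $\mathbb{P}_1$". The plan is to consider
\[ D^* = \{(p,q) \in \mathbb{P}_0 \times \mathbb{P}_1 : p \leq p_0 \text{ and } p \Vdash_{\mathbb{P}_0} \check q \in \dot D \} \cup \{(p,q) : p \perp p_0\}, \]
and to show that $D^*$ is dense in $\mathbb{P}_0 \times \mathbb{P}_1$ in $M$. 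Given any $(p', q') \in \mathbb{P}_0 \times \mathbb{P}_1$, either $p' \perp p_0$ (then already in $D^*$), or there exists $p'' \leq p', p_0$; by forced density of $\dot D$, some $p \leq p''$ decides a specific $\check q \in \dot D$ with $q \leq q'$, and $(p,q) \in D^*$ extends $(p',q')$. Genericity of $G_0 \times G_1$ then produces some $(p,q)$ in $D^* \cap (G_0 \times G_1)$; since $p \in G_0$, we must be in the first alternative, so $q \in D \cap G_1$.

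For $(2) \Rightarrow (1)$: given $E \subseteq \mathbb{P}_0 \times \mathbb{P}_1$ dense in $M$, define in $M[G_0]$ the projection
\[ E_{G_0} = \{q \in \mathbb{P}_1 : (\exists p \in G_0)\, (p, q) \in E\}. \]
The key sub-step is to verify that $E_{G_0}$ is dense in $\mathbb{P}_1$ in $M[G_0]$; this follows because for any $q_0 \in \mathbb{P}_1$, the set $A_{q_0} = \{p \in \mathbb{P}_0 : (\exists q \leq q_0)\, (p,q) \in E\}$ is dense in $\mathbb{P}_0$ in $M$ (using density of $E$), so meets $G_0$. Then $G_1$ meets $E_{G_0}$ by its genericity over $M[G_0]$, yielding the desired element of $E \cap (G_0 \times G_1)$. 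Filterhood of $G_0 \times G_1$ is routine.

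Finally, for the model identities: $G_0, G_1 \in M[G_0 \times G_1]$ by projection, so $M[G_0][G_1] \subseteq M[G_0 \times G_1]$; conversely $G_0 \times G_1$ is definable from the pair $G_0, G_1$, giving the reverse inclusion, and the argument with roles reversed yields $M[G_1][G_0] = M[G_0 \times G_1]$ as well. I expect the only delicate point to be the density verification for $D^*$ in step $(1)\Rightarrow(2)$: one must carefully exploit the maximum principle (or a direct extension argument in $\mathbb{P}_0$) to pin down an actual check-name $\check q$ rather than a general name, which is why the $(p \perp p_0)$ alternative is included.
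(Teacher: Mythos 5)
Your argument is correct, and since the paper does not prove this lemma itself but cites it as Kunen's Product Lemma (Ch.~VIII, Thm.~1.4), the relevant comparison is with that standard proof — which is exactly what you give: the density set $D^*$ built from a name $\dot D$ and a condition $p_0$ forcing its density for $(1)\Rightarrow(2)$, the projection $E_{G_0}$ with the auxiliary dense set $A_{q_0}$ for $(2)\Rightarrow(1)$, symmetry for $(3)$, and mutual minimality for the model equalities. No gaps; the only point you flag (extracting a specific $\check q$ below $p''$) is handled by the routine dense-decision argument you indicate.
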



The next definition, and lemma can help us to find an intermediate model between $M$, and $M[G_{\alpha}]$ (for a fixed ordinal $\alpha \in C$).
\bd
	Let $\mathbb{P}$ be a partial order in $M$, and let $\name{\mathbb{Q}}$ be a $\mathbb{P}$-name for a partial order. If the filter $F \subseteq \mathbb{P}$ is $\mathbb{P}$-generic over $M$, and $H \subseteq \name{\mathbb{Q}}[F] \in M[F]$, then 
	\[ F \ast H = \{ \langle p, \name{q} \rangle \in \mathbb{P} \ast \name{\mathbb{Q}} : \ p \in F, \name{q}[F] \in H \}.  \]  
\ed

We state \cite[Ch VIII. Thm. 5.5]{kunen}
\begin{lemma} \label{altiterlem}
	Let $\mathbb{P}$ be a partial order in $M$, and let $\name{\mathbb{Q}}$ be a $\mathbb{P}$-name for a partial order.  Let $G \subseteq \mathbb{P} \ast \name{\mathbb{Q}}$ be a filter, $F = \pr_\mathbb{P}(G)$, and let
	\[ H = \{ \name{q}[F] : \  \exists p: \ \langle p,q \rangle \in G \} .  \]
	If $G$  is $\mathbb{P} \ast \name{\mathbb{Q}}$-generic over $M$, then 
	\begin{itemize}
		\item $F$ is $\mathbb{P}$-generic over $M$,
		\item $H \subseteq \name{\mathbb{Q}}[F] \in M[F]$ is $\name{\mathbb{Q}}[F]$-generic over $M[F]$,
		\item $G= F \ast H$, and
		\item $M[G] = M[F][H]$.
	\end{itemize}
	 
\end{lemma}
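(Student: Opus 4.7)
The plan is to verify the four bullet points in the order listed. That $F$ is $\mathbb{P}$-generic over $M$ is the easy warm-up: $F$ is a filter because projection onto the first coordinate of $\mathbb{P} \ast \mathbb{Q}$ preserves both $\leq$ and compatibility (a common refinement of $\langle p_1, q_1\rangle, \langle p_2, q_2\rangle$ has a first coordinate refining both $p_1, p_2$), and for a dense $D \in M$ in $\mathbb{P}$, the lift $D^\ast = \{\langle p, q\rangle \in \mathbb{P} \ast \mathbb{Q}: p \in D\}$ is dense in $\mathbb{P} \ast \mathbb{Q}$ (refine only the first coordinate into $D$, keeping the same $\mathbb{Q}$-name), so $G \cap D^\ast \neq \emptyset$ and hence $F \cap D \neq \emptyset$.

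The main obstacle is genericity of $H$ over $M[F]$. Given a dense $D \in M[F]$ of $\mathbb{Q}_F$, I would pick a $\mathbb{P}$-name $\dot D$ for $D$ and a $p_0 \in F$ forcing that $\dot D$ is dense in $\mathbb{Q}$ (achievable by enlarging $\dot D$ to its ``dense closure'' below $p_0$ if necessary). The key claim is that
\[ E = \{\langle p, q\rangle \in \mathbb{P} \ast \mathbb{Q}: \ p \leq p_0 \ \wedge \ p \Vdash q \in \dot D\} \]
is dense below $\langle p_0, \mathbb{1}_{\mathbb{Q}}\rangle$: for any $\langle p, q\rangle \leq \langle p_0, \mathbb{1}_{\mathbb{Q}}\rangle$, $p$ forces $\dot D$ to meet the cone below $q$, so the maximal principle yields a name $\dot{q}'$ with $p \Vdash \dot{q}' \in \dot D \wedge \dot{q}' \leq q$, giving $\langle p, \dot{q}'\rangle \in E$ below $\langle p, q\rangle$. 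Since $G$ is generic and $\langle p_0, \mathbb{1}_{\mathbb{Q}}\rangle \in G$, we obtain some $\langle p, q\rangle \in G \cap E$, whence $q_F \in H \cap D$. That $H$ is a filter on $\mathbb{Q}_F$ is straightforward: pairwise compatibility transfers from $G$ (a common refinement in $G$ interprets to a common refinement in $\mathbb{Q}_F$), and upward closure follows from the ordering on $\mathbb{P} \ast \mathbb{Q}$.

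Finally, $G = F \ast H$: the inclusion $\subseteq$ is immediate from the definitions of $F$ and $H$, and $\supseteq$ is a density/compatibility argument --- any $\langle p, q\rangle \in F \ast H$ is compatible with every element of $G$ (combine witnesses from $F$ and $H$ into a single refinement), so genericity of $G$ together with upward closure puts $\langle p, q\rangle \in G$. The model equality $M[G] = M[F][H]$ is then automatic: $F, H \in M[G]$ gives $M[F][H] \subseteq M[G]$, and the formula $G = F \ast H$ recovers $G$ inside $M[F][H]$, giving the reverse inclusion.
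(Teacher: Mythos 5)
Your proposal is essentially the standard proof of this lemma, which the paper itself does not prove but simply quotes as \cite[Ch VIII. Thm. 5.5]{kunen}: lifting dense subsets of $\mathbb{P}$ for the genericity of $F$, the dense set $E$ of conditions whose second coordinate is forced into $\dot D$ for the genericity of $H$, and the witness-combination plus minimality argument for $G=F\ast H$ and $M[G]=M[F][H]$, so the approach matches the cited source and is correct. The only two details worth writing out are that under the paper's definition $\eqref{iter}$, which requires second coordinates to lie in $\dom(\mathbb{Q}_\alpha)$, the maximal principle may not hand you a name in $\dom(\mathbb{Q})$, so in the density argument for $E$ you should instead pass to some $r\leq p$ deciding a witness $\tau\in\dom(\mathbb{Q})$ with $r\Vdash \tau\in\dot D\wedge\tau\leq q$; and that in the inclusion $F\ast H\subseteq G$ the combined refinement of the witnesses only gets below $\langle p,q\rangle$ after you invoke the truth lemma to find some $p'\in F$ forcing $q$ to equal the name $q_1$ witnessing $q_F\in H$, since $(q_1)_F=q_F$ alone does not let the refinement's second coordinate be forced below $q$.
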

Using this, and having the filter $G_{\alpha}$ which is $\mathbb{R}_{\alpha}$-generic over $M$,
  we can define 
  \begin{equation} \label{FH}
  \begin{array}{rl}
   F_{\alpha} =  & \pr_{\mathbb{P}_{\alpha}}(G_{\alpha}) \ (\subseteq \mathbb{P}_{\alpha}), \\ 
 H_{\alpha}  = & \{ \name{q}[F_{\alpha}] : \  \exists p \ \langle p,q \rangle \in G_{\alpha} \} \subseteq \name{\mathbb{Q}_{\alpha}}[{F_{\alpha}}], \end{array} \end{equation}
(where $F_{\alpha}$ is $\mathbb{P}_{\alpha}$-generic over $M$) so that
\begin{equation} \label{GFH} G_{\alpha} = F_{\alpha} \ast H_{\alpha}, \end{equation}
and
\begin{equation} \label{MGFH}
M [G_{\alpha}] = M[F_{\alpha}][H_{\alpha}]
\end{equation}

holds.

Now we will verify some technical statements about the aforementioned partial orders.
\bd
For each $\alpha \in C$ we define the subset $\mathbb{R}^\bullet_\alpha \subseteq \mathbb{R}_\alpha$ as follows. Let $\langle p, \name{q} \rangle \in \mathbb{R}^\bullet_\alpha$, iff ($\langle p, \name{q} \rangle \in \mathbb{R}_\alpha$, and)
\begin{enumerate}
 	\item	$p \in \mathbb{P}_{\alpha} = \mathbb{P}_{\hom}$, $\htt(p) = \gamma +1$,
 	\item $\name{q} = \widehat{h}$ for a function $h \in M$ (with $\dom(h) \subseteq X_\alpha$) mapping into the top level of $p$, i.e. $\ran(h) \subseteq p \cap 2^{\gamma}$.
\end{enumerate}
\ed
\bd Let $\mathbb{R}^\bullet$ to be the subset of $\mathbb{R}$ consisting of  elements $r$ satisfying  $r_\alpha \in \mathbb{R}_\alpha^\bullet$ for every $\alpha \in \supp(r)$.
\ed

\begin{lemma} \label{hulyetechnikas}
	Let $\alpha \in C$. Then $\mathbb{R}_\alpha^\bullet$ is dense in $\mathbb{R}_\alpha$.
\end{lemma}
\begin{proof}
	Since for a fixed $\langle p , \name{q} \rangle \in \mathbb{R}_\alpha$
	\[ p \Vdash \name{q} \in \name{\mathbb{Q}_\alpha} =  \{ f:  \ \dom(f)\in  [\widehat{X_\alpha}]^{<\omega_1}, \ \ran{f}\subseteq \name{T_\alpha}  \} ,\]
	thus
	\[ \begin{array}{rl} p \Vdash & \exists f,g \text{ enumerations of } \dom(q)\subseteq \widehat{X_\alpha},\ \ran(q) \subseteq 2^{<\omega_1}  \text{  in type } \omega , \\
	& \text{where } (\forall i) \  \name{q}(f(i)) = g(i),   \end{array} \]
	and by applying the maximal principle \cite[II., Thm. 8.2]{kunen} two times, there exist  names $f'$ and $g'$ such that
	\begin{equation} \label{q} p \Vdash f' \text{ is an enumeration of } \dom(q)  \text{  in type } \widehat{\omega} ,\end{equation}
	and
	\begin{equation} \label{qq} p \Vdash g' \text{ is an enumeration of } \ran(q)  \text{  in type } \widehat{\omega}, \ \forall i \ (q(f'(i)) = g(i))  .\end{equation}
	Now, recall that $\mathbb{P}_{\alpha} = \mathbb{P}_{\hom}$ is $\omega_1$-closed (by Lemma $\ref{omclos}$),
	thus the set $2^{<\omega_1}$ will not grow by an extension with a $\mathbb{P}_{\alpha}$-generic filter.
	Moreover, the $\omega_1$-closedness of $\mathbb{P}_{\alpha}$ allows us to apply Lemma $\ref{ujfvneve}$, 
	and we obtain a condition $p' \leq p$, and  functions $f_0: \omega \to X_\alpha$,
	$g_0: \omega \to 2^{<\omega_1}$ ($f_0,g_0 \in M$) such that
	\[ p' \Vdash f' = \widehat{f_0}  \ \wedge \ g' = \widehat{g_0}.\]
	By $\eqref{q}$-$\eqref{qq}$ this means that $p'$ determines $\name{q}$, i.e. letting $h = g' \circ f'^{-1}$
	\[ p' \Vdash \name{q} = \widehat{h}. \]
	By extending $p'$	further if necessary, we may assume that $p'$
	has successor height, say
	$\htt(p')= \gamma + 1$, and, moreover, $\ran(h) \subseteq 2^{\leq \gamma}$.
	Now, for each $x \in \dom(h)$,
	choose $s_x \in p' \cap 2^\gamma$	extending $h(x)$ (this is possible since $p'$ is normal),
	and define a function $h'$ with $\dom(h') = \dom(h)$ and $h'(x)= s_x$ for all
	$x \in \dom(h)$. Then we are done, since
	\[ \langle p, \name{q} \rangle \geq \langle p', \name{q} \rangle = \langle p', \widehat{h} \rangle \geq \langle p', \widehat{h}' \rangle \in \mathbb{R}^\bullet_\alpha. \]
\end{proof}

\bfa \label{surufa} For a set $E \subseteq C$ the restriction $\mathbb{R}^\bullet \restrict{E} = \{ r \restrict{E}: \ r \in \mathbb{R}^\bullet \}$ is a dense subset of $\mathbb{R} \restrict{E}$.
\efa

\begin{lemma} \label{cc}
	Let $M' \supseteq M$ be a c.t.m. such that
	\begin{equation} \label{folfelt} M^\omega \cap M' = M^\omega \cap M, \end{equation}
	i.e. there are no new sequences of type $\omega$ consisting of elements of $M$.
	
	Then  for any set $E \subseteq C$ ($E \in M'$)
	\[ M' \models \mathbb{R}\restrict{E} \text{ is } \omega_2 \text{-cc} .\]
\end{lemma}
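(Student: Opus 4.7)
The plan is a standard $\Delta$-system argument. Supposing for contradiction that, in $M'$, the family $A = \{r_\xi : \xi < \omega_2\} \subseteq \mathbb{R}_E$ is an antichain, the aim is to refine $A$ to an $\omega_2$-sized subfamily in which some two conditions are visibly compatible.

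First I would normalize each condition. For $\alpha \in \supp(r_\xi)$ write $r_\xi(\alpha) = \langle p_\xi^\alpha, q_\xi^\alpha \rangle$. Since $\mathbb{P}_\alpha$ is $\omega_1$-closed (Lemma \ref{omclos}) and $p_\xi^\alpha$ forces $q_\xi^\alpha$ to be a countable partial function from $X_\alpha$ into the generic tree, Lemma \ref{ujfvneve} lets me strengthen $p_\xi^\alpha$ to a condition deciding $q_\xi^\alpha$ to be the check-name $\check{f_\xi^\alpha}$ of an actual countable partial function $f_\xi^\alpha \colon D_\xi^\alpha \to p_\xi^\alpha$ with $D_\xi^\alpha \in [X_\alpha]^{<\omega_1}$. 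Extending conditions preserves incompatibility, so the refined family is still an antichain, and the $q$-coordinates are effectively ground-model objects.

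Next, identifying each $X_\alpha$ with an ordinal of its cardinality, I would encode each $r_\xi$ by the countable set
\[ I_\xi = \supp(r_\xi) \cup \bigcup_{\alpha \in \supp(r_\xi)} D_\xi^\alpha \]
and apply the $\Delta$-system lemma in $M'$ to $\{I_\xi : \xi < \omega_2\}$, thinning $A$ to an $\omega_2$-sized subfamily whose encodings form a $\Delta$-system with root $I$. To license this I would observe that the hypothesis $M^\omega \cap M' = M^\omega \cap M$ forces $\omega_1^M = \omega_1^{M'}$ and $\mathcal{P}(\omega) \cap M' = \mathcal{P}(\omega) \cap M$, so CH (taken as a standard auxiliary assumption on $M$) persists in $M'$, giving $\omega_1^\omega = \omega_1$ in $M'$, which is the cardinal-arithmetic input the $\Delta$-system lemma requires.

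Then I would pigeonhole on the root data. Setting $s = I \cap C$ and $D^\alpha = I \cap X_\alpha$ for $\alpha \in s$, the restriction of $r_\xi$ to the root is encoded by the countable tuple $\langle p_\xi^\alpha, f_\xi^\alpha \restriction D^\alpha \rangle_{\alpha \in s}$. Under CH there are $\omega_1$ countable subtrees of $2^{<\omega_1}$ and $\omega_1$ countable partial functions into any countable set, and $|s| \leq \omega$, so at most $\omega_1^\omega = \omega_1$ possible root-restrictions occur. Pigeonhole then produces an $\omega_2$-sized subfamily all of whose members share the same root-restriction.

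Finally, for any two distinct $\xi, \xi'$ in this subfamily, I would define $r$ coordinatewise: $r(\alpha) = r_\xi(\alpha)$ if $\alpha \in \supp(r_\xi) \setminus s$, $r(\alpha) = r_{\xi'}(\alpha)$ if $\alpha \in \supp(r_{\xi'}) \setminus s$, $r(\alpha) = \langle p_\xi^\alpha, \check{g_\alpha} \rangle$ with $g_\alpha = f_\xi^\alpha \cup f_{\xi'}^\alpha$ if $\alpha \in s$, and $r(\alpha) = \mathbb{1}_{\mathbb{R}_\alpha}$ otherwise. The union $g_\alpha$ is a well-defined countable partial function because $D_\xi^\alpha \cap D_{\xi'}^\alpha = D^\alpha$ (by the $\Delta$-system) and the two summands agree on $D^\alpha$ (by the pigeonhole step). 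Since $\supp(r) \subseteq \supp(r_\xi) \cup \supp(r_{\xi'})$ is countable, $r$ is a genuine element of $\mathbb{R}_E$ extending both $r_\xi$ and $r_{\xi'}$, contradicting that they lie in an antichain.

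The principal obstacle is the normalization step: conditions in $\mathbb{R}_\alpha$ carry $\mathbb{P}_\alpha$-names $q_\xi^\alpha$ rather than ground-model objects, so pigeonholing on them directly would be incoherent. Lemma \ref{ujfvneve} sidesteps this by reducing each $q_\xi^\alpha$ to a check-name for a specific ground-model function, after which the $\Delta$-system and pigeonhole steps become routine combinatorics.
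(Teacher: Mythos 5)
Your proof is correct and follows essentially the same route as the paper's: normalize each coordinate to a pair $\langle p,\check f\rangle$ with $f$ a ground-model function via $\omega_1$-closedness and Lemma \ref{ujfvneve} (the paper packages this as Lemma \ref{hulyetechnikas}), then use CH in $M'$ (justified via \eqref{folfelt}) with the $\Delta$-system lemma and a pigeonhole on root data to produce two compatible conditions. The only cosmetic differences are that you run one $\Delta$-system argument on the combined code of supports and domains where the paper runs it twice (first on supports, then on the domains of the $h_r$'s), and that you exhibit an explicit common lower bound rather than just arguing compatibility coordinatewise.
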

\begin{proof}
	 Note that our conditions imply that ($(\omega_1)^M = (\omega_1)^{M'}$), and $CH$ holds also in $M'$.
	 For the conclusion of the lemma these corollaries would be sufficient, but in our applications $\eqref{folfelt}$ will always hold.
	
	We will need the following lemma \cite[Ch II. Thm. 1.6.]{kunen} which we will refer to as the $\Delta$-system Lemma.
	\begin{lemma} \label{delta}
		Let $\kappa$ be an infinite cardinal, let $\theta > \kappa$ be regular, and satisfy $\forall \alpha < \theta$ ($|\alpha^{<\kappa}| < \theta$). Assume that $|\mathcal{A}| \geq \theta$, and 
		$\forall x \in \mathcal{A}$ ($|x| < \kappa$). Then there is a $\mathcal{D} \subseteq \mathcal{A}$,
		such that $|\mathcal{D}|= \theta$, and $\mathcal{D}$ forms a $\Delta$-system, i.e.
		there is a kernel set $y$ such that 
		\[ \forall x \neq x' \in \mathcal{D}: \ \ x \cap x' = y. \]
	\end{lemma}

	From now on we will work in $M'$, therefore $\omega_2$ will stand for $\omega_2^{M'}$.
	Assume on the contrary that $A   \subseteq \mathbb{R}\restrict{E}$ is an antichain of size $\omega_2$.
	We can apply Lemma $\ref{delta}$ for the set of supports $\{\supp(r): \ r \in A \}$ of the antichain (with $\kappa = \omega_1$, and $\theta = \omega_2$),
 since each support is countable (by $\eqref{Rdef}$) and $\omega_1^\omega = \omega_1$ in $M'$ (by $\eqref{folfelt}$). Hence we can assume
  w.l.o.g. that $\{\supp(r): \ r \in A \}$ is a $\Delta$-system with the kernel 
 \begin{equation} \label{S}
 S \subseteq E , \end{equation} 
 that is for each $r \in A$, $\alpha \in S$, 
 \[ \pr_\alpha(r) \neq \mathbb{1}_{\mathbb{R}_{\alpha}}, \]
 and if $\alpha \notin S$, then there is at most one $r \in A$ for which 
 \[ \pr_\alpha(r) \neq \mathbb{1}_{\mathbb{R}_{\alpha}}. \]

By Fact $\ref{surufa}$ we can assume that $A \subseteq \mathbb{R}^\bullet \restrict{E}$.
Therefore for each $u \in A$ we can
define the function $h_u$ with	$\dom(h_u) \subseteq \cup \{ X_\alpha: \alpha \in S \}$,
 $\ran(h_{u}) \subseteq 2^{<\omega_1}$, and a sequence $ \langle p^{(u)}_\alpha : \ \alpha \in S \rangle \in  \prod_{\alpha \in S} \mathbb{P}_\alpha$  
such that the following holds
\begin{equation} \label{egen}
 \pr_\alpha(u) = \langle p^{(u)}_\alpha, \widehat{h_{u}\restrict{X_\alpha}} \rangle \ \ (\forall \ \alpha \in S).
\end{equation}

   Since 
   \[ M' \models  |\mathbb{P}_{\hom}| \leq |(2^{<\omega_1})^\omega| = |\omega_1^\omega| = \omega_1 \] by $CH$,
   $|\prod_{\alpha \in S} \mathbb{P}_\alpha | = |\omega_1^\omega| = \omega_1$ (in $M'$), we obtain that there is an element
   $p = \langle p_\alpha: \ \alpha \in S \rangle \in \prod_{\alpha \in S} \mathbb{P}_\alpha$, and a set $D \subseteq A$, $|D| = \omega_2$,
   such that
   \begin{equation} \label{ppp} \forall u \in D: \ \ \langle p_\alpha^{(u)}: \ \alpha \in S \rangle = p. \end{equation}
   
   
    Now we will apply the $\Delta$-system Lemma for the system $\{ \dom(h_u): u \in D \}$ 
  of countable sets, hence there is a subset $|D'| = \omega_2$ such that
  \begin{equation} \label{Del} \{ \dom(h_u):  u \in D' \} \ \text{ is a }\Delta\text{-system with the kernel }K \subseteq \cup  \{X_\alpha: \alpha \in S \}, \end{equation} 
 where $K$
  is countable.  
  Now 
  \[ \{h_{u}\restrict{K} : \ \ u \in D' \} \subseteq (2^{<\omega_1})^K, \]
  where this latter set has size at most $\omega_1$, because by 
  $CH$
  \[ \left|(2^{<\omega_1})^K\right| =  |\omega_1^\omega| =  \omega_1 . \] 
  Therefore we can obtain a subset $D'' \subseteq D'$ of size $\omega_2$ such that 
  \begin{equation} \label{mege} h_{u}\restrict{K} = h_{v}\restrict{K} \text{ for each } u,v \in D'' .\end{equation}
  
  Now it is straightforward to check that if $u \neq v \in D''$, then $u$ and $v$ are compatible.
%

\end{proof}

Next we prove that for each $\alpha \in C$ the  $T_\alpha$ has $|X_\alpha|$-many branches in $M[G_\alpha]$.
With a slight abuse of notation, from now on we will identify each branch $b \in \mathcal{B}(T)$ with
the corresponding function from $\omega_1$ to $\{ 0,1 \}$, i.e. the following holds
\[ \mathcal{B}(T) = \{ f: \omega_1 \to 2: \ (\forall \alpha < \omega_1) f\restrict{\alpha} \in T \} .\]

\begin{lemma}
	Let $\alpha \in C$ be fixed. Then the following holds in $M[G_\alpha]$.
	\[ M[G_\alpha] \models |\mathcal{B}(T_\alpha)| = |X_\alpha|.\]
\end{lemma}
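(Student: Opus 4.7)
The plan is to establish both $|\mathcal{B}(T_\alpha)| \geq |X_\alpha|$ and $|\mathcal{B}(T_\alpha)| \leq |X_\alpha|$ in $M[G_\alpha] = M[F_\alpha][H_\alpha]$, where $F_\alpha$ and $H_\alpha$ are as in $\eqref{FH}$--$\eqref{MGFH}$, so $F_\alpha$ is $\mathbb{P}_\alpha$-generic (producing the tree $T_\alpha$) and $H_\alpha$ is $(\mathbb{Q}_\alpha)_{F_\alpha}$-generic over $M[F_\alpha]$.

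For the lower bound, in $M[G_\alpha]$ I define $F \colon X_\alpha \to 2^{\omega_1}$ by
\[
  F(x) = \bigcup\{q(x) : q \in H_\alpha,\ x \in \dom(q)\},
\]
and plan to show $F$ is an injection into $\mathcal{B}(T_\alpha)$ via two density facts about $(\mathbb{Q}_\alpha)_{F_\alpha}$. First, for each $x \in X_\alpha$ and $\beta < \omega_1$, the set of $q$ with $x \in \dom(q)$ and $\htt(q(x)) \geq \beta$ is dense: given any $q$, enlarge its domain to include $x$ (assigning any initial value in $T_\alpha$), then use the second clause of normality of $T_\alpha$ to extend $q(x)$ up to level $\beta$. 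Hence each $F(x)$ is a function with domain $\omega_1$ all of whose initial segments lie in $T_\alpha$, i.e.\ $F(x) \in \mathcal{B}(T_\alpha)$. Second, for $x \neq y$ the set of $q$ with $q(x) \perp q(y)$ in $T_\alpha$ is dense by the ``two immediate successors'' clause of normality: if $q(x)$ and $q(y)$ are comparable, extend one of them along the immediate successor not lying on the path to the other. This forces $F(x) \neq F(y)$.

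For the upper bound, the plan is to show that every branch of $T_\alpha$ in $M[G_\alpha]$ already has the form $F(x)$ for some $x \in X_\alpha$, by exploiting the countable-support product structure of $\mathbb{Q}_\alpha$ together with the homogeneity of the generic tree. Starting from a name $\dot b$ for a branch, the $\omega_2$-chain condition of $\mathbb{R}_\alpha$ from Lemma $\ref{cc}$ restricts the support of $\dot b$ to some $Y \subseteq X_\alpha$ of size at most $\omega_1$. A density argument then shows that for any $x^* \in X_\alpha \setminus Y$, $\dot b \neq F(x^*)$ is forced: given a condition, extend its $Y$-part to decide a long enough initial segment $s$ of $\dot b$, then extend the $x^*$-coordinate (using normality) to a node at the same level that differs from $s$. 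Combined with the homogeneity of $T_\alpha$ (Claim $\ref{segit}$) and the coordinate-swap automorphisms of $\mathbb{R}_\alpha$, the plan is to localize $\dot b$ to a single coordinate, yielding $\dot b = F(x)$ for some $x \in Y$.

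The main obstacle is precisely this upper bound. Naive name-counting via Lemma $\ref{hatvanyh}$ is insufficient: for $\alpha \in C$ with $\cf(|X_\alpha|) = \omega$, which occurs as early as $\alpha = \omega$, K\"onig's theorem gives $|X_\alpha|^{\omega_1} > |X_\alpha|$ in $M$, so from the bound $|\mathcal{B}(T_\alpha)| \leq |2^{\omega_1}|^{M[G_\alpha]} \leq |X_\alpha|^{\omega_1}$ one cannot conclude $|\mathcal{B}(T_\alpha)| \leq |X_\alpha|$. The genuine difficulty is therefore executing the coordinate-swap/homogeneity argument rigorously --- in particular, reducing the support $Y$ of $\dot b$ from a set of size $\omega_1$ down to a single element of $X_\alpha$, using the interplay between permutations of $X_\alpha$ inducing automorphisms of $\mathbb{Q}_\alpha$ and the tree automorphisms $F_{ab}$ of $T_\alpha$.
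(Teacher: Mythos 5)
Your lower bound is fine and agrees with the paper's (brief) treatment of that direction. The upper bound, however, is built around a claim that is false: it is not true that every branch of $T_\alpha$ in $M[G_\alpha]$ is of the form $F(x)$ for a single $x \in X_\alpha$. Because $T_\alpha$ is homogeneous, for any distinct $x_1,\dots,x_{2k+1} \in X_\alpha$ the pointwise mod-$2$ sum $F(x_1)+\cdots+F(x_{2k+1})$ is again a branch of $T_\alpha$: each initial segment $\bigl(F(x_1)+\cdots+F(x_{2k+1})\bigr)\restriction\beta$ is the image of $F(x_{2k+1})\restriction\beta$ under a composition of the automorphisms $F_{ab}$, exactly as in the proof of Lemma \ref{lezar}; moreover, by Claim \ref{segit}, replacing an initial segment of any branch by another node of the same level again yields a branch. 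A routine density argument shows that for $k\geq 1$ such a sum does not agree with any single $F(w)$ even on a tail, so no coordinate-swap/automorphism argument can localize a name for an arbitrary branch to one coordinate of $\mathbb{Q}_\alpha$; the target you set yourself cannot be reached, and your own (correct) observation that support-plus-counting fails when $\cf(|X_\alpha|)\leq\omega_1$ means the gap is not repaired by the preceding reduction to a support $Y$ of size $\omega_1$.

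What is true, and what the paper proves as Lemma \ref{legtech}, is the weaker structural statement that every branch of $T_\alpha$ in $M[G_\alpha]$ agrees above some countable ordinal with a finite \emph{odd} sum $b_1+\cdots+b_{2k+1}$ of branches from $\mathcal{B}_H=\{F(x): x\in X_\alpha\}$; since a branch is then determined by a finite subset of $X_\alpha$ together with a node of $T_\alpha$ (of which there are $\omega_1$), this gives $|\mathcal{B}(T_\alpha)|\leq |X_\alpha|$ without any cardinal arithmetic on $|X_\alpha|^{\omega_1}$. The proof of this is not a support or automorphism argument but a fusion argument: assuming a condition forces $\dot b$ to be a counterexample, one builds a decreasing sequence of conditions $\langle p_i,\check h_i\rangle$ and ordinals $\gamma_i$ which force, for each odd-size subset of $\dom(h_i)$, a disagreement between $\dot b$ and the corresponding sum at a level below $\gamma_{i+1}$, and then seals the union at level $\gamma=\sup_i\gamma_i$ using Lemma \ref{lezar}, whose whole point is that the new top level can be taken to consist exactly of the node-shifts of odd sums of the committed branches $h(x)$; the sealed condition forces $\dot b\restriction\gamma$ to be such a sum from some point on, contradicting the forced disagreements. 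To make your proposal work you would have to replace the step ``localize $\dot b$ to a single coordinate'' by this sealing construction (and weaken the conclusion accordingly); as written, the upper bound has a genuine gap.
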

\begin{proof}

    	Using $\eqref{FH}-\eqref{MGFH}$, there is a filter $F_\alpha$  which is $\mathbb{P}_\alpha$-generic over $M$, and 
    a filter $H_\alpha \subseteq \name{\mathbb{Q}_\alpha}[F_\alpha]$ which is $\name{\mathbb{Q}_\alpha}[F_\alpha]$-generic over
    $M[F_\alpha]$. Now by the very definition of the name $\name{\mathbb{Q}_\alpha}$, $\eqref{ite1}-\eqref{ite3}$,
    $ \name{\mathbb{Q}_\alpha}[F_\alpha]$ is the notion of forcing
    \[ \name{\mathbb{Q}_\alpha}[F_\alpha] =  \{ f:  \ \dom(f)\in [X_\alpha]^{<\omega_1}, \ \ran{f}\subseteq T_\alpha  \}, \]
    where a condition $g$ is stronger than $f$ iff for each $x \in \dom(f)$, 
    $g(x) \in T_\alpha$ is an end-extension  of $f(x)$ as functions,
    i.e.,
    \[ g(x)\restrict{\dom(f(x))} = f(x).\]
    It is straightforward to see that a generic filter adds branches, where the pairwise distinct new branches (by genericity) are corresponding to elements of
    $X_\alpha$, thus
    \begin{equation} \label{egyenl1}
    M[G_\alpha] = M[F_\alpha][H_\alpha]  \models |\mathcal{B}(T_\alpha)| \geq |X_\alpha|.
    \end{equation}
    The following lemma proves that  the inequality  $|\mathcal{B}(T_\alpha)| \leq |X_\alpha|$ also holds in $  M[G_\alpha] $.
    
    Before stating that lemma we state that
    for functions $f,g$ where $\ran(f),\ran(g) \subseteq \{0,1 \}$ having the same domain, by $f+g$
    we mean the pointwise addition modulo $2$.
    \begin{lemma} \label{legtech}
    	Denoting the set of branches explicitly added by the filter $H_\alpha$ by 
    	\[ \mathcal{B}_H = \{ b_x : \ x \in X_\alpha \} \in M[F_\alpha][H_\alpha] = M[G_\alpha], \]
    	the following will hold.
    	For each branch $b \in \mathcal{B}(T) \cap  M[G_\alpha]$, there is an ordinal $\beta < \omega_1$, and branches
    	$b_1, b_2, \dots , b_n \in \mathcal{B}_H$ such that
    	\[ b\restrict{\omega_1 \setminus \beta} = (b_1 + b_2 + \dots + b_n)\restrict{\omega_1 \setminus \beta} , \]
    	that is, for each $\gamma \geq \beta$ $b(\gamma) = (b_1 + b_2 + \dots + b_n)(\gamma)$.
    	
    \end{lemma}
    This is a statement in $M[G_\alpha]$, assume that it doesn't hold, and let $b \in \mathcal{B}(T) \cap  M[G_\alpha]$
    be a counterexample, and $\dot{b}$ a name for it. Then there is an element $r \in G_\alpha$ that forces (in $M$) that $b$ is a counterexample, 
    i.e.
    \begin{equation} \label{bforsz} \begin{array}{rl} r \Vdash & (\forall s = \{\name{b_1}, \name{b_2}, \dots,\name{b_n} \} \in [\mathcal{B}_H]^{<\omega})   \\ & (\forall \gamma < \widehat{\omega_1}) (\exists \beta \geq \gamma) \ \ (\name{b_1}+ \name{b_2} + \dots + \name{b_n})(\beta) \neq \dot{b}(\beta)   \end{array} \end{equation}
    By Lemma $\ref{hulyetechnikas}$ we can assume that $r \in \mathbb{R}_\alpha^\bullet$.
    	   
    Working in $M$, first we will need the following Claim.
    \bc \label{sorcl}
    There exist a decreasing sequence of conditions $\langle r_i = \langle p_i, \widehat{h_i} \rangle: \ i \in \omega \rangle \in M$ in $\mathbb{R}^\bullet_\alpha$, and a strictly increasing sequence of countable ordinals , $\langle \gamma_i: \ i \in \omega \rangle \in M$
     such that the following conditions hold
    \begin{enumerate}[(i)]
    	\item \label{fel0} $r_0 = r$ from $\eqref{bforsz}$,
    	\item \label{mas} the height of $p_i$ is $\gamma_i+1$,
    	
    	\item \label{raan} the function $h_i : \dom(h_i) \to p_i$ maps its domain \emph{onto} $p_i$-s top level, i.e.,
    	\[ \ran(h_i) = p_i \cap 2^{\gamma_i}, \]
      	\item \label{utos} for each $i \in \omega$, $k \in \omega$, $s = \{x_0,x_1, \dots, x_{2k} \} \in [\dom(h_i)]^{2k+1}$ there exists
    	$\beta_s \in  [\gamma_i, \gamma_{i+1})$ such that
    	\begin{equation} \label{forsz} r_{i+1} \Vdash  \dot{b}(\beta_s) \neq \left(\name{b_{x_0}} + \name{b_{x_1}} + \dots + \name{b_{x_{2k}}}\right)(\beta_s) .  \end{equation}
    	
    \end{enumerate}
    \ec
    
    \bc \label{korlcl}
	    For the sequences $\left\langle r_i = \langle p_i, \widehat{h_i} \rangle: \ i \in \omega \right\rangle, \langle \gamma_i: \ i \in \omega \rangle \in M$ given by Claim $\ref{sorcl}$
	    there exist a countable ordinal $\gamma_\infty$ and a lower bound $r_\infty = \langle p_\infty, \widehat{h_\infty} \rangle \leq r_i $ ($\forall i \in \omega$) in $\mathbb{R}_\alpha$, where 
	    \begin{equation} \label{gammaoo} \htt(p_\infty) = \gamma_\infty + 1=  \sup \{ \gamma_i: \ i \in \omega \} + 1, \end{equation}
	    and 
	    \[ h_\infty: \bigcup_{i \in \omega} \dom(h_i) \to p_\infty, \]
	    \[ h_\infty(x) = \cup\{h_i(x): \ x \in \dom(h_i) \}. \] 
	    Moreover, for each $t \in \Le_{\gamma_\infty} (p_\infty) = p_\infty \cap 2^{\gamma_\infty}$ there exists 
	    $\delta < \gamma_\infty$, $k \in \omega$, $\{ x_0,x_1, \dots x_{2k}\} \in [\dom(h_\infty)]^{2k+1}$ such that
	    \[ t\restrict{\gamma_\infty \setminus \delta} = \left(h(x_0) + h(x_1) + \dots h(x_{2k}) \right)\restrict{\gamma_\infty \setminus \delta}. \]
	     
    \ec
    
    Before proving these claims first we show that Claim $\ref{sorcl}$ and $\ref{korlcl}$ finish the proof of Lemma $\ref{legtech}$.
    Suppose that Claim $\ref{korlcl}$ gives the lower bound $r_\infty = \langle p_\infty, \widehat{h_\infty} \rangle$ for the decreasing sequence 
    $\left\langle r_i = \langle p_i, \widehat{h_i} \rangle: i \in \omega \right\rangle$ (given by Claim $\ref{sorcl}$). 
    Then for a generic filter $G' \subseteq \mathbb{R}_\alpha$ with $r_\infty \in G'$, $r_\infty$ determines the levels
    of the generic tree $T' = \cup \{ p: \ \exists \name{q} \ \ \langle p, \name{q} \rangle \in G'\}$ below $\htt(p_\infty) = \gamma_\infty+1$, i.e. $T' \cap 2^{\leq \gamma_\infty} = p_\infty$. 
    
    Therefore, towards a contradiction, suppose that $G' \subseteq \mathbb{R}_\alpha$ is an arbitrary filter that is generic over $M$ with $r_\infty = \langle p_\infty, \widehat{h_\infty} \rangle \in G'$, and $\eqref{bforsz}$ holds with $\dot{b}$.
    In $M[G']$
    $\dot{b}[G']: \omega_1 \to 2$ is a branch through the generic tree $T' \supseteq p_\infty$, and
     $\dot{b}[G']\restrict{\gamma_\infty}$ must be an element of $T' \cap 2^{\gamma_\infty} = p_\infty \cap 2^{\gamma_\infty}$. 
	Claim $\ref{korlcl}$ states that there exist $\delta < \gamma_\infty$, $k \in \omega$, $\{ x_0,x_1, \dots x_{2k}\} \in [\dom(h_\infty)]^{2k+1}$  such that
	\beeq \label{begyenl} \dot{b}[G']\restrict{\gamma_\infty \setminus \delta} = \left(h_\infty(x_0) + h_\infty(x_1) + \dots h_\infty(x_{2k}) \right)\restrict{\gamma_\infty \setminus \delta}. \eeq

     Also by the construction of $h_\infty$ (Claim $\ref{korlcl}$)
    $\dom(h_\infty) = \cup \{ \dom(h_i) : i \in \omega \}$, and $\gamma_\infty = \sup \{ \gamma_i : \ i \in \omega \}$ (and recall that $r_i = \langle p_i, \widehat{h_i} \rangle$ is decreasing, hence $\langle \dom(h_i): \ i \in \omega \rangle$ is increasing). This means that there is a finite $n$ such that
    $x_0,x_1, \dots, x_{2k} \in \dom(h_n)$, and $\gamma_n > \delta$. 
    Then condition $\eqref{utos}$ from Claim $\ref{sorcl}$ implies that
    there is a $\beta \in [\gamma_n, \gamma_{n+1})$ such that
    \begin{equation} \label{fff} r_\infty \leq r_{n+1} \Vdash \dot{b}(\beta) \neq \left(h_\infty(x_0) + h_\infty(x_1) + \dots + h_\infty(x_{2k})\right)(\beta). \end{equation}
    In particular (using that $\delta < \gamma_n \leq \beta < \gamma_{n+1} < \gamma_{\infty}$)
    \[ G' \ni r_\infty \Vdash \dot{b}\restrict{\gamma_\infty \setminus \delta} \neq \left(h_\infty(x_0) + h_\infty(x_1) + \dots + h_\infty(x_{2k})\right)\restrict{\gamma_\infty \setminus \delta}, \]
    which contradicts $\eqref{begyenl}$.
    
    For the proof of Claim $\ref{sorcl}$ and $\ref{korlcl}$ we will need the following technical preparations.
    	\bc \label{lezar}
    	Suppose that $\xi < \omega_1$ is a limit ordinal and $T' \subseteq 2^{<\xi}$ is a countable homogeneous normal tree
    	of height $\xi$, and $U \subseteq \mathcal{B}(T') \subseteq 2^\xi$, $U \neq \emptyset$ is a countable set of branches of $T'$.
    	Then 
    	\[ T= T' \cup \left\{ t \cup (u_1+u_2+ \dots+ u_{2k+1})\restrict{\xi \setminus \dom(t)} : \ t \in T', \{u_1,u_2, \dots, u_{2k+1} \} \in [U]^{2k+1}, k \in \omega \right\} \]
    	is a countable homogeneous normal tree of height $\xi +1$, where $T\restrict{\xi} = T'$.
    	\ec 
    	\begin{proof}
    		Define the set $B$ as
    		\begin{equation} \label{Bdf} B =  \left\{ u_1+ u_2 + \dots + u_{2k+1}   : \ k \in \omega, \{u_1,u_2, \dots u_{2k+1}  \} \in [U]^{2k+1} \right\} , \end{equation}
    		and
    		\begin{equation} \label{palak}  T = T' \cup \{ t \cup b\restrict{\xi \setminus \dom(t)}: t \in T', b \in B \}, \end{equation}
    		i.e. we add some branches to $T'$ and obtain a tree of height $\xi+1$, in other words 
    		\begin{equation} \label{megj}  T \setminus T' = \Le_\xi(T) = 2^\xi \cap T. \end{equation}

    		First we have to check that every element of $2^\xi$ which we added is indeed a branch of
    		$T'$, i.e., 
    		\begin{equation} \label{bran} \left(t \cup b\restrict{\xi \setminus \dom(t)}\right)\raisebox{-.5ex}{$\big|_\beta$} \in T' \text{ for each } b \in B, t \in T', \beta < \xi. \end{equation}
    		Fixing an arbitrary element
    		$b=u_1+ u_2 + \dots + u_{2k+1}$ from $B$, $t \in T'$, and $\beta < \xi$ first we can assume that $\beta > \dom(t)$.
    		Observe that for each $\delta \in (\dom(t), \xi)$, using that $T'$ contains $t$, $u_1\restrict{\delta}$, $u_2\restrict{\delta}$, $\dots$, $u_{2k+1}\restrict{\delta}$, by the homogeneity of $T'$
    		\[ b\restrict{\delta} = \left( F_{\left(u_1\restrict{\delta} \right)\left(u_2\restrict{\delta}\right)} \right) \circ \left( F_{\left(u_3\restrict{\delta} \right) \left( u_4\restrict{\delta}\right)} \right) \circ \dots \circ \left( F_{\left(u_{2k-1}\restrict{\delta}  \right) \left( u_{2k}\restrict{\delta}\right)} \right)(u_{2k+1}\restrict{\delta}) \in T' .\]
    		Now, if  $t, b\restrict{\beta} \in T$, 	we can use Claim $\ref{segit}$ to get that $t \cup b\restrict{\beta \setminus \dom(t)} \in T'$.

    		$T$ is obviously countable,
    		and the normality will follow from the fact that $T'$ is normal, we only
    		have to check that for each $t \in T$ there is $t' \in T \cap 2^\xi$ greater than $t$, i.e., $t \subseteq t'$. 
    		Indeed, if $t \in T$ is not on the top level of $T$ then choosing an arbitrary $u \in U \neq \emptyset$,
    		we have by the construction that $t \cup u\restrict{\xi \setminus \dom(t)} \in T$.
    		
    		For the homogeneity of $T$, fix $\beta \leq \xi$, $c,d \in \Le_\beta(T) = 2^\beta \cap T$, $t \in T$, we have to check 
    		that $F_{cd}(t)$ is in $T$. We can assume that 
    		$\dom(t) = \xi$ since otherwise $t \in T'$, and the homogeneity of $T'$ implies that 
    		$F_{cd}(t) = F_{\left(c\restrict{\dom(t)} \right) \left(d\restrict{\dom(t)} \right)}(t) \in T'$.
    		Therefore $\dom(t) = \xi$, and $t = t' \cup b\restrict{\xi \setminus \dom(t)}$ for some $t' \in T'$, $b \in B$. 
    		Second, if $\beta = \dom(c) = \dom(d) < \xi$, then letting $\delta = \max\{\beta, \dom(t') \}$, $t$ can be considered as
    		\[ t = t\restrict{\delta} \cup b\restrict{\xi \setminus \delta}, \]
    		(where $t\restrict{\delta} \in T'$ by $\eqref{bran}$), hence again by the homogeneity of $T'$ we have
    		\[ F_{cd}(t) = F_{cd}(t\restrict{\delta}) \cup b\restrict{\xi \setminus \delta} \in T. \]
    		This means that the only remaining case is when $\beta = \xi$, that is, $c,d, \in 2^\xi$, and are of the form
    		\[ c = t'' \cup (b'')\restrict{\xi \setminus \dom(t'')} \text{ for some } b'' \in B, \]
    		\[ d = t''' \cup (b''')\restrict{\xi \setminus \dom(t''')} \text{ for some } b''' \in B .\]
    		Now, if the ordinals $\dom(t'),\dom(t''),\dom(t'') \in \xi$ are not equal, then letting $\delta = \max \{ \dom(t'),\dom(t''),\dom(t'') \}$, we can view
    		$t = t' \cup b\restrict{\xi \setminus \dom(t')}$ as $t= t\restrict{\delta} \cup b\restrict{\xi \setminus \delta}$, and similarly 
    		\[ c = c \restrict{\delta} \cup (b'')\restrict{\xi \setminus \delta} , \]
    		\[ d = d\restrict{\delta} \cup (b''')\restrict{\xi \setminus \delta}.\]
    		
    		Then 
    		\[ F_{cd}(t) = F_{\left( c \restrict{\delta}\right) \left( d\restrict{\delta} \right)}(t\restrict{\delta}) \cup (b+c+d)\restrict{\xi \setminus \delta},\]
    		we would only need that $b+c+d \in B$. By $\eqref{Bdf}$ $b,c,d \in B$ implies $b+c+d \in B$, 
    		therefore $T$ is a homogeneous tree, indeed.
    		
    	\end{proof}
    	Moreover, we obtain the following.
    	\begin{cor} \label{zrtsgfele}
    		Let $\delta \in C$, $\langle v_i: \ i \in \omega \rangle \in M$ be a decreasing sequence in $\mathbb{R}^\bullet_\delta$. 
    		Then there is a common lower bound $v_\infty = \langle w_\infty, \widehat{g_\infty} \rangle \in \mathbb{R}^\bullet_\delta$ of the sequence.
    		
    	\end{cor}	
    	\begin{proof}
    		Let $w_n$, $g_n$ are such that $v_n = \langle w_n, \widehat{g_n} \rangle$.
    		Let $w = \cup \{ w_n: \ n \in \omega \}$ (which is in $\mathbb{P}_\delta = \mathbb{P}_{\hom}$ by Lemma $\ref{omclos}$), and $g_\infty$ to be the function such that $\dom(g_\infty) = \cup \{ \dom(g_n): \ n \in \omega \}$, 
    		assigning $g_\infty(x) = \cup \{ g_n(x): \ x \in \dom(g_n) \}$. Then $g_\infty(x)$ is a branch of $w$.
    		
    		If the $w_n$'s  are strictly decreasing (and thus the $\htt(w_n)$'s are strictly increasing)  $\htt(w)$ must be a limit ordinal, and then for obtaining $w_\infty$ we can apply Claim $\ref{lezar}$ with 
    		\[ U = \{ g_\infty(x): \ x \in \dom(g_\infty)\}, \] 
   		$\xi = \htt(w)$, $T'= w$.
   		

    	\end{proof}
	    \bcor \label{bccc}
	     For $\delta \in C$, $r = \langle p, \widehat{g} \rangle \in \mathbb{R}^\bullet_\delta$, with the countable ordinals $\htt(p) = \xi \leq \xi'$ there exists an extension $r' = \langle p', \widehat{g'} \rangle \in \mathbb{R}^\bullet_\delta$ of $r$ with $\htt(p')  = \xi' +1$.
	     \ecor
	     \bp
		      The statement holds also for $\xi'<\xi$, if  $\xi' + 1 = \xi$. We apply induction on $\xi'$, and assume that $\xi' \geq \xi$.
		     

		     For $ \xi' = \xi'' +1$, if $r'' = \langle p'', \widehat{g''} \rangle \leq r$ is the desired 
		     extension for $\xi''$, then let $p' = p'' \cup \{ t \tieconcat i: \ i \in \{0,1 \}, t \in p \}$, and for each $x \in \dom(g'')$ let $g'(x) = g(x) \tieconcat 0$. 
		     
		     Finally, for limit $\xi' > \xi$ choose a sequence $\xi_0 = \xi < \xi'_1 < \dots < \xi'_n < \dots$ such that $\sup \{\xi'_i: \ i \in \omega\} = \xi'$. By induction choose a decreasing sequence 
		     \[ r = \langle p, \widehat{g} \rangle \geq \langle p'_1, \widehat{g'_1} \rangle \geq \dots \langle p'_i, \widehat{g'_i} \rangle \geq \dots \]
		     in $\mathbb{R}_\delta$
		     so that $\htt(p'_i) = \xi'_i + 1$.
		     Now applying Claim $\ref{zrtsgfele}$ will work.
	     \ep
    
    \bp(Claim $\ref{sorcl}$)
   We are given $r_0= \langle p_0, \widehat{h_0} \rangle = r \in \mathbb{R}_\alpha^\bullet$ (the height of $p_0$ $\htt(p_0) = \gamma_0+1$), and we will apply induction. \\
   Assume that $r_0, r_1, \dots, r_{i}$, and $\gamma_0, \gamma_1, \dots, \gamma_{i}$ are defined.
   Let $\langle s_n: \ n \in \omega, \rangle $ be an enumeration of the set $\cup \{[\dom(h_{i})]^{2k+1} : \ k \in \omega \}$ (recall that $\dom(h_i)$ is countable using $\eqref{ite1}$ from Definition $\ref{Qdef}$). Now we construct a decreasing
   sequence
   \[ r_i \geq v_0 \geq v_1 \geq \dots \geq v_m \geq \dots \]
   below $r_i$ in $\mathbb{R}^\bullet_\alpha$, and
   a sequence $\langle \beta_n: \ n \in \omega \rangle$ (where each  $\beta_j \geq \gamma_i$)
   such that 
   \begin{enumerate}
   
   	\item if $s_m = \{ x_0, x_1, \dots, x_{2k}\}$, then 
   	\begin{equation} \label{mforsz} v_m \Vdash \dot{b}(\beta_m) \neq (\name{b_{x_0}}+ \name{b_{x_1}} + \dots + \name{b_{x_{2k}}})(\beta_m) \end{equation}
   		\item for $v_m = \langle w_m, \widehat{g_m} \rangle$ we have $\htt(w_m) > \beta_m +1$.

   \end{enumerate}
   Suppose that the $\beta_j$-s, and the $v_j$-s are defined for $j \leq l$, and let
   $s_{l+1} = \{x_0,x_1, \dots, x_{2k} \}$.
   Then using that $v_l \leq r_i \leq r$ and $\eqref{bforsz}$, 
   \begin{equation}   v_l \Vdash   \ (\exists \beta \geq \gamma_i) \ (\name{b_{x_1}}+ \name{b_{x_2}} + \dots + \name{b_{x_{2k}}})(\beta) \neq \dot{b}(\beta),  \end{equation}
   hence there is a countable ordinal $\beta_{l+1} \geq \gamma_i$, and a condition $v'_l \leq v_l$,
   such that
   \beeq \label{betalp} v'_l \Vdash   \  (\name{b_{x_0}}+ \name{b_{x_1}} + \dots + \name{b_{x_{2k}}})(\beta_{l+1}) \neq \dot{b}(\beta_{l+1}). \eeq
   By Lemma $\ref{hulyetechnikas}$, we can assume that $v'_l \in \mathbb{R}_\alpha^\bullet$, and let $v_{l+1} = v'_l = \langle w'_l, \widehat{g'_l} \rangle$.
   Also by further extension (using Corollary $\ref{bccc}$) we can assume that 
	\beeq \label{htw} \htt(w_{l+1}) \text{  is  greater than }\beta_{l+1} +1. \eeq

   As we obtained the decreasing sequence $\langle v_j =  \langle w_{j}, \widehat{g_{j+1}} \rangle : \ j \in \omega \rangle$ under $r_i = \langle p_i, \widehat{h_i} \rangle$
   and the $\beta_j$-s, we can define  $r_{i+1} = \langle p_{i+1}, \widehat{h_{i+1}} \rangle \in \mathbb{R}_\alpha^\bullet$ to be a lower bound of the $v_j$'s as follows.
   Using Corollary $\ref{zrtsgfele}$ first we define $\langle p'_{i+1}$, $\widehat{h'_{i+1}} \rangle$ to be a lower bound of the $v_j$'s.
   
  Define $h_{i+1}$ as follows. For each $x \in \dom(h'_{i+1})$ let $h_{i+1}(x)=  h'_{i+1}(x)$.
  For ensuring $\eqref{raan}$, for each $t \in (p_{i+1} \cap 2^{\gamma_{i+1}}) \setminus \{ h_{i+1}(x): \ x \in \dom(h_{i+1}) \}$
  we can pick pairwise distinct elements $x_t$ from $X_\alpha \setminus \dom(h'_{i+1})$, and define $h_{i+1}(x_t) = t$. Now we have checked $\eqref{raan}$.
 
   It remained to check that $\gamma_{i+1}$, $p_{i+1}$, $h_{i+1}$ (defined by the equalities $r_{i+1} = \langle p_{i+1}, \widehat{h_{i+1}} \rangle$ and $\htt(p_{i+1}) = \gamma_{i+1} +1$) satisfy  $\eqref{utos}$.
   If $s_m = \{ x_1,x_2, \dots, x_{2k+1} \} \in [\dom(h_i)]^{2k+1}$, then $r_{i+1} \leq v_{m+1} \leq v'_m$ and 
   $\eqref{betalp}$ together implies $\eqref{forsz}$ from $\eqref{utos}$. $\beta_{m} \in [\gamma_i, \gamma_{i+1})$ follows from the fact that $\eqref{htw}$ holds for $v_{l+1}$'s first coordinate $w_{l+1}$.
   \ep

   \bp(Claim $\ref{korlcl}$) 
   So suppose that  $\langle r_i = \langle p_i, \widehat{h_i} \rangle: \ i \in \omega \rangle$, $\langle \gamma_i: \ i \in \omega \rangle$ fulfills our
   requirements $\eqref{fel0}-\eqref{utos}$. Let $p' = \cup \{p_i: \ i \in \omega \}$ which is a 
   countable homogeneous normal tree of height 
   \begin{equation} \label{gam} \gamma= \sup \{ \gamma_i : \ i \in \omega \} \end{equation}
   by Lemma
   $\ref{omclos}$, and because 
   the sequence of $\gamma_i$-s is strictly increasing. We define the function $h$ as follows. 
   \begin{equation} \label{domh} \dom(h) = \cup \{\dom(h_i): \ i \in \omega \}, \end{equation}
   and for each $x \in \dom(h)$ define $h(x)$ to be $\cup \{ h_i(x): \ x \in \dom(h_i) \}$, which is a function, since
   $\langle p_i, \widehat{h_i} \rangle$-s form a decreasing sequence in $\mathbb{R}^\bullet_\alpha$. By 
   $\eqref{raan}$, $h(x) \in 2^\gamma$, which is not an element of $p'$, since   $p' \subseteq 2^{<\gamma}$ (by $\htt(p') = \gamma$).

		Apply Claim $\ref{lezar}$ with $T' = p'$, $\xi = \gamma$, $U = \ran(h)$, and let $p_\infty = T$ be the given tree (that is
		\begin{equation} \label{palak2}  p_\infty = p' \cup \{ t \cup b\restrict{\xi \setminus \dom(t)}: t \in p', b \in B \}, \end{equation}
		where 
		\begin{equation} \label{Bdf2} B = \{ h(x_0) + h(x_1) + \dots + h(x_{2k})   : \ k \in \omega, \{x_0,x_1, \dots x_{2k}  \} \in [\dom(h)]^{2k+1} \}), \end{equation}
		we are done.
		\ep

\end{proof}

We will need the following basic lemmas.

\begin{lemma} \label{nincsuj3}
	Suppose that $E \subseteq C$, $E \in M'$ is a set, $M' \supseteq M$ is a c.t.m., such that
	\begin{equation} \label{feltetel}  M^\omega \cap M' = M^\omega \cap M, \end{equation}
	i.e., there is no new sequence of type $\omega$ consisting of elements of $M$.
	Let $T \in M'$ ($T \subseteq 2^{<\omega_1}$) be a tree of height $\omega_1$ with countable levels.
	Then extending $M'$ by a filter $F \subseteq \mathbb{R}\restrict{E}$ which is $\mathbb{R}\restrict{E}$-generic adds no new branches to $T$, i.e.
	\[ \mathcal{B}(T) \cap M'[F] = \mathcal{B}(T) \cap M'. \]
\end{lemma}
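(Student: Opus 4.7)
The plan is a standard tree-of-conditions diagonalization: I suppose toward contradiction that some $r_0 \in \mathbb{R}_E$ forces (over $M'$) that $\dot b$ is a branch of $T$ not belonging to $M'$, and derive a contradiction by embedding $2^\omega$ into a single countable level of $T$.

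First I would verify that $\mathbb{R}_E$ remains $\omega_1$-closed from the viewpoint of $M'$. In $M$ this follows from Corollary~\ref{zrtsgfele} applied coordinatewise (after using Lemma~\ref{hulyetechnikas} to bring each coordinate into the canonical form $\langle p, \check h\rangle$) together with the standard observation that closure under $\omega$-sequences upgrades to $\omega_1$-closure by extracting a cofinal $\omega$-subsequence from any countable decreasing sequence. The hypothesis $M^\omega \cap M' = M^\omega \cap M$ transports the closure to $M'$: any decreasing sequence in $\mathbb{R}_E \subseteq M$ indexed by a countable ordinal can be reindexed by $\omega$ (using that $\omega_1^M = \omega_1^{M'}$, which itself follows from the hypothesis), hence lies in $M$, hence has a lower bound computable in $M$ and visible in $M'$. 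As a byproduct no new reals appear, so each level $L_\beta(T)$ with $\beta < \omega_1$ remains countable throughout.

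Next I would build recursively a binary splitting tree $\langle r_s : s \in 2^{<\omega}\rangle$ below $r_0$ and a strictly increasing sequence $\langle \alpha_n : n < \omega\rangle \subseteq \omega_1$ so that for each $s \in 2^n$ the condition $r_s$ decides the restriction of $\dot b$ to $\alpha_n$ as a specific $t_s \in L_{\alpha_n}(T)$, and so that the two immediate successors $r_{s0}, r_{s1}$ satisfy $t_{s0} \neq t_{s1}$. The splitting step succeeds because if some $r_s$ had no pair of incompatible extensions disagreeing about $\dot b$ at any level, the uniquely forced values would define $\dot b$ already inside $M'$, contradicting newness. After performing the $2^n$ splittings at stage $n$, one picks $\alpha_{n+1}$ above every level so far involved and, using $\omega_1$-closure, extends each $r_{s0}, r_{s1}$ once more to decide the restriction of $\dot b$ to $\alpha_{n+1}$.

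Finally, set $\alpha = \sup_n \alpha_n < \omega_1$. For each $\sigma \in 2^\omega$, $\omega_1$-closure yields a condition $r_\sigma$ below every $r_{\sigma \restriction n}$; it forces the restriction of $\dot b$ to $\alpha$ to equal some $t_\sigma \in L_\alpha(T)$. If distinct $\sigma, \sigma' \in 2^\omega$ first disagree at position $n$, then by construction $r_{\sigma \restriction (n+1)}$ and $r_{\sigma' \restriction (n+1)}$ already forced incompatible initial segments of $\dot b$ at level $\alpha_{n+1} \leq \alpha$, so $t_\sigma \neq t_{\sigma'}$. Thus $\sigma \mapsto t_\sigma$ is an injection $2^\omega \hookrightarrow L_\alpha(T)$, contradicting countability of $L_\alpha(T)$. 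I expect the main obstacle to be the first step --- ensuring that $\omega_1$-closure genuinely transfers from $M$ to $M'$, which is exactly the purpose of the hypothesis $M^\omega \cap M' = M^\omega \cap M$; a minor bookkeeping point is aligning the common splitting level $\alpha_{n+1}$ across all $2^n$ parallel branches at each stage, which is handled by one further closure-based extension.
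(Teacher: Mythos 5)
Your proposal is correct and follows essentially the same route as the paper's proof: a binary tree of conditions splitting on the values forced for the name of the branch, fusion of each $\omega$-chain using hypothesis \eqref{feltetel} to see that the chain (an $\omega$-sequence of elements of $M$ built in $M'$) lies in $M$, where Lemma \ref{hulyetechnikas} and Corollary \ref{zrtsgfele} supply lower bounds, and a final injection of $(2^\omega)^{M'}$ into a single countable level of $T$. The one point to phrase carefully is your closure claim: strengthening each term of an already-given decreasing sequence into the canonical form $\langle p,\check h\rangle$ need not keep the sequence decreasing, so (as in the paper) the canonical form should be maintained at every step of the recursive construction, after which Corollary \ref{zrtsgfele} applies coordinatewise to the countable supports.
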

\begin{proof}
	As $\mathbb{R}^\bullet\restrict{E}$ is dense in  $\mathbb{R} \restrict{E}$ (by Corollary $\ref{surufa}$), forcing with one yields exactly the same extensions as forcing with the other (by Lemma $\ref{surube}$), we only have to show that $\mathbb{R}^\bullet\restrict{E}$ is $\omega_1$-closed in $M'$ to apply Lemma $\ref{nemadujagat}$.
	Our conditions together with by Corollary $\ref{zrtsgfele}$ imply that for each decreasing sequence (of type $\omega$) in $\mathbb{R}^\bullet\restrict{E}$ belonging to $M'$ has a lower bound. (In fact first we can find such a lower bound only in $\mathbb{R}^\bullet$ and we can restrict the obtained condition, or we can also refer to the fact that these partial functions are countably supported). Then Lemma $\ref{nemadujagat}$ gives the desired result.
	
\end{proof}

\begin{lemma} \label{nincsuj2}
	Let $E \subseteq C$ be a set, $M' \supseteq M$ be a c.t.m. such that
	\beeq \label{folflt} M^\omega \cap M' = M^\omega \cap M, \eeq
	i.e. there is no new sequence of type $\omega$ consisting of elements of $M$.
	Then extending $M'$ by a filter $F \subseteq \mathbb{R}\restrict{E}$ which is $\mathbb{R}\restrict{E}$-generic adds no new sequences of type $\omega$
	consisting of elements of $M'$, i.e.
	\[ (M')^\omega \cap M' = (M')^\omega \cap M'[F]\]
\end{lemma}
\bp
Again, (similarly to the proof of Lemma $\ref{nincsuj3}$) we have that each decreasing $\omega$-sequence in $\mathbb{R}^\bullet\restrict{E}$ belonging to $M'$ has a lower bound (by Corollary $\ref{zrtsgfele}$). Then apply Corollary $\ref{nincsuj}$.
\ep

Recall that $G \subseteq \mathbb{S}$ is $\mathbb{S}$-generic over $M$. In the next lemma we will prove that
if $\alpha \notin C$, then there is no Kurepa tree in $M[G]$ with $\omega_\alpha^{M[G]}$ branches.
\begin{lemma} \label{alfas}
	Let $T \in M[G]$ ($T \subseteq 2^{<\omega_1}$) be a tree of height $\omega_1$ with countable levels, and let $\alpha$ be an ordinal so that
	\beeq \label{alphadef} M[G] \models |\mathcal{B}(T)| = \omega_\alpha. \eeq
	Then $\alpha \in C$.
\end{lemma}
The proof of this lemma will take a lot of effort. From Lemma $\ref{kappacc}$ to Lemma $\ref{xkv}$ we 
will find two models, each containing $T$, but exactly the greater containing all branches of $T$.
From Claim $\ref{izomc}$ to  Lemma $\ref{utso}$ we will see that the homogeneity of our generic $T_\delta$-s imply that the larger of the two models cannot contain all the branches, contradicting our previous arguments.

Fix an ordinal $\alpha$ such that
\begin{equation} \label{alfdef} \begin{array}{c} \alpha \notin C, \text{ and} \\
M[G] \models |\mathcal{B}(T)|] = \omega_\alpha. \end{array} \end{equation}
We will derive a contradiction by finding a suitable intermediate model containing  $M[G] \cap \mathcal{B}(T)$, arguing that the residual forcing still adds new branches.

First we would like to find an intermediate extension $N$ ($M \subseteq N \subseteq M[G]$) which is small enough, but $T \in N$. 

Before that we prove that $T \subseteq M$.

\begin{cl} \label{sorozatt}
	\[ M^\omega \cap M[G] = M^\omega \cap M, \]
	This implies that $\omega_1$ does not collapse, that is,
	\[ M[G] \models \nexists \text{bijection between } \omega \text{ and } \omega_1^M. \]
	Moreover,
	\[ 2^{<\omega_1} \cap M = 2^{<\omega_1} \cap M[G], \]
	in particular
	\[ T \subseteq M. \]
\end{cl}
\bp
We have two cases depending on $C$. The filter $G$ is $\mathbb{S}$-generic, where either $\mathbb{S} = \mathbb{L} \times \mathbb{R}$ (if $2 \notin C$), or
$\mathbb{S} = \mathbb{R}$ (otherwise, $\eqref{Rdf}$).
Now as $\mathbb{L}$ is $\omega_1$-closed, both $\mathbb{R}^\bullet$ and $\mathbb{R}^\bullet \times \mathbb{L}$ are $\omega_1$-closed (Corollary $\ref{zrtsgfele}$ and each condition is countably supported). This means that by Corollary $\ref{nincsuj}$ forcing with $\mathbb{R}^\bullet \times \mathbb{L}$, or $\mathbb{R}^\bullet$ does not add new sequences.
Then recalling Corollary $\ref{surufa}$ we obtain that $\mathbb{R}^\bullet \times \mathbb{L}$ is dense in $\mathbb{L} \times \mathbb{R}$, and  $\mathbb{R}^\bullet$ is dense in $\mathbb{R}$, we are done (by Lemma  $\ref{surube}$).
\ep

\begin{lemma} \label{kappacc}
	Suppose that $M' \supseteq M$ be a c.t.m. where
	\beeq \label{ome} M \cap M^\omega = M' \cap M^\omega, \eeq
	and for our inaccessible $\kappa$ from $\eqref{limzart}$ (and $\eqref{Levy}$)
	\[ (\kappa \text{ is a cardinal})^{M'}. \]
	 Then
	\[ M' \models \mathbb{L} \text{ is } \kappa\text{-cc}. \]
\end{lemma}
\begin{proof}
	Suppose that $A \subseteq \mathbb{L} \in M$ is a an antichain of size $\kappa$. First we can apply the $\Delta$-system lemma (Lemma $\ref{delta}$) for the
	system $\{ \dom(a): \ a \in A \}$, since $\dom(a)$ is countable by $\eqref{Levy}$, and for any infinite ordinal  $\gamma < \kappa$ 
	\[ (\gamma^\omega)^{M'} = (\gamma^\omega)^{M} \leq (\gamma^\gamma)^M < \kappa \]
	by the fact that $\kappa$ is inaccessible in $M$. 
	Therefore we can assume that $\{ \dom(a): \ a \in A \}$ is a $\Delta$-system, let $K \subseteq \kappa \times \omega_1$ denote its kernel. Since $K$ is countable, and $\kappa > \omega$ is inaccessible in $M$, $\eqref{ome}$ implies that $\omega < \cf(\kappa)^{M'}$.
	Therefore, there is an ordinal $\delta < \kappa$
	such that $K \subseteq \delta \times \omega_1$. This and the definition of $\mathbb{L}$ $\eqref{Levy}$ imply that for each $a \in A$, $\ran(a\restrict{K}) \subseteq \delta$.
	But the derived system $A' = \{ a\restrict{K}: \ a \in A \} \subseteq \delta^K$, gives an upper bound 
	\[ (\delta^{\omega})^{M'} = (\delta^{\omega})^{M} \leq (\delta^\delta)^M < \kappa \] for the cardinality of $A'$. This contradicts the fact that $A$ is an antichain of size $\kappa$.
\end{proof}

\begin{lemma} \label{GCH0}
	
		In the final model, $M[G]$
		\beeq \label{elsoresz} M[G] \models |X_\delta| = \omega_\delta \ \ (\forall \delta \in C). \eeq
		In general, cardinals and cofinalities greater than or equal to $(\omega_2)^{M[G]}$ are preserved, where 
		\begin{itemize}
			\item $(\omega_2)^{M[G]}=\kappa$, if $2 \notin C$, that is, we forced with $\mathbb{L}$ too,
			\item $(\omega_2)^{M[G]}=(\omega_2)^M$, if $2 \in C$.
		\end{itemize}
\end{lemma}
\bp	For proving $\eqref{elsoresz}$, by Definition $\ref{Xhalmazok}$ it is enough to show that if $2 \notin C$ then only cardinals strictly between $\omega_1$ and $\kappa$ are collapsed, and if $2 \in C$, then no cardinals are collapsed.

In both cases, Corollary $\ref{sorozatt}$ states that $\omega_1$ is not collapsed. Now if $2 \notin C$, then we had forced with 
$\mathbb{L} \times \mathbb{R}$, otherwise only with $\mathbb{R}$.
In the first case, by Lemma $\ref{szorz2}$, $G$ can be identified with $I \times G\restrict{C}$, and we can consider this extension as first extending with $I$, and then with  $G\restrict{C}$.
Therefore, in both case it is enough to show that
\begin{enumerate}[(1)]
	\item adding the filter $I$ which is generic over $M$ destroys exactly cardinals in $(\omega_1,\kappa)$.
	\item \label{amas}  extending $M$  (resp., $M[I]$) by $G\restrict{C}$ doesn't collapse cardinals greater than $\omega_2^M = \omega_2$ (resp., $\omega_2^{M[I]} = \kappa$)
\end{enumerate}  
Note that by Corollary $\ref{sorozatt}$, $\omega_1$ is absolute.
For the first claim, $\mathbb{L}$  collapses every cardinal between $\omega_1$ and $\kappa$,
because the generic filter gives surjections from $\omega_1$ onto each $\mu < \kappa$. Lemma $\ref{kappacc}$ gives that $\mathbb{L}$
is $\kappa$-cc in $M$, thus by Lemma $\ref{cc-cf}$ cardinals and cofinalities greater than or equal to $\kappa$ remain cardinal in $M[I]$.

For $\eqref{amas}$, we can apply Lemma $\ref{cc}$ for $\mathbb{R}$ (with $M'=M$, and $M'=M[I]$ too, because of Corollary $\ref{sorozatt}$), and we obtain that $\mathbb{R}$ is $\omega_2^{M'}$-cc in $M'$ in each case. Then Lemma $\ref{cc-cf}$ implies that 
cardinals (and cofinalities) greater than or equal to $\omega_2^{M'}$ are still cardinals (and cofinalities) after forcing.
This completes the proof of $\eqref{elsoresz}$.
\ep

\bl \label{GCH}		
		
	 If $M \subseteq M[J] \subseteq M[G]$ is a forcing extension, ($J \subseteq \mathbb{O}$ is generic over $M$), where 
	$\mathbb{O}$ is a notion of forcing (smaller than
			$\omega_2^{M[G]}$)$^M$, then $GCH$ holds in $M[J]$ for $ \nu \geq \omega_2^{M[G]}$.
			
			Moreover, in the case when we used the inaccessible cardinal $\kappa$ (i.e. $(\omega_2)^{M[G]}=\kappa$), then
		\[	  M[J] \models  \ \text{``} \kappa \text{ is inaccessible, in particular } 2^{\omega_1} < \kappa. \text{''}  \]
		
\el

\begin{proof}

	Let $\lambda = |\mathbb{O}|^M < \omega_2^{M[G]}$. By Lemma $\ref{hatvanyh}$, if $\nu$ is a cardinal in $M$, then
	\[ M[J] \models 2^\nu \leq (\lambda^{\nu \cdot \lambda})^M. \]
	This yields that
	\begin{equation} \label{nagyobb} (\nu \geq \lambda) \rightarrow \left( M[J] \models 2^\nu \leq (\nu^{\nu \cdot \nu})^M = (\nu^+)^M \right) \end{equation}
	by $GCH$ in $M$.
		Therefore we also obtain that in $M[J]$ $2^\nu = \nu^+$ for $\nu \geq \omega_2^{M[G]}$ (because $\lambda < \omega_2^{M[G]}$). 
		
		Moreover,	if
		$M[G]= M[I][G\restrict{C}]$ and $\omega_2^{M[G]} = \kappa$ is inaccessible in $M$, then  $\kappa$ is still a strong limit in $M[J]$.
	In this case, because $|\mathbb{O}|^M = \lambda < \kappa$, and $\mathbb{O}$ obviously has the $\kappa$-cc in $M$,
	we have that Lemma $\ref{cc-cf}$ guarantees that $\cf(\kappa)$ is still $\kappa$ in $M[J]$. This yields the conclusion that $\kappa$ remains inaccessible in $M[J]$.
\end{proof}

 For finding our desired model $N$ which contains $T$ as an element, but cannot contain all of its branches (because $2^{\omega_1}$ is smaller there), 
 we need to extend $M$ by filters containing less information than what  $I \subseteq \mathbb{L}$ and $G\restrict{C} \subseteq \mathbb{R}\restrict{C} =\mathbb{R} $ give.
 First we are to find a model $M''$ between $M$ and $M[I]$ extracting minimal information from the extension by $I$.

 We would like to consider the notion of forcing $\mathbb{L}$ as a product.
 \bd For $\mathbb{L}$ defined in $\eqref{Levy}$, and a set of ordinals $K \subseteq   \kappa$ let
 \[ \begin{array}{l} \mathbb{L}\restrict{K} = \{ f \in \mathbb{L}: \ \dom(f) \subseteq K \times \omega_1\} = \\
 = \{ f: \ \dom(f) \subseteq K \times \omega_1, \ |\dom(f)| < \omega_1, \ f(\lambda, \alpha) < \lambda \ \ (\forall \ \lambda \in K \} \end{array}.\]
 Then clearly
 \[ \mathbb{L} \simeq \mathbb{L}\restrict{K} \times \mathbb{L}\restrict{\kappa \setminus K}. \]
 Furthermore, for any filter $F \subseteq \mathbb{L}$ define
 \[ F\restrict{K} = F \cap \mathbb{L}\restrict{K}. \]
 \ed
 
	\bc
	There exists an ordinal  $\mu < \kappa$ such that
	\[ T \in M[G\restrict{C}][I\restrict{\mu}]. \]
	\ec
\bp
	First, using Lemma $\ref{nincsuj2}$
	\begin{equation} \label{nuncs} M^\omega \cap M[G\restrict{C}] = M^\omega \cap M, \end{equation}
	also implying that $\omega_1$ is absolute. Moreover, $\kappa$ is a cardinal in $M[G\restrict{C}] \subseteq M[G]$, because $\mathbb{R} = \mathbb{R} \restrict{C}$ has the $\omega_2$-cc in $M$ (by Lemma $\ref{cc}$).
	Now Lemma $\ref{kappacc}$ states that 
	\begin{equation} \label{cc-c} M[G\restrict{C}] \models \mathbb{L} \text{ is } \kappa\text{-cc. } \end{equation} 
	Applying Lemma $\ref{nice}$ in $M[G\restrict{C}]$, there is a nice $\mathbb{L}$-name $\sigma \in M[G\restrict{C}]$ for a subset of $2^{<\omega_1}$ for which
	\[ \mathbb{1}_\mathbb{L} \Vdash (\dot{T} \subseteq \widehat{2^{<\omega_1}}) \rightarrow (\dot{T} = \sigma) , \]
	where  $\dot{T} \in M[G\restrict{C}]$ is a $\mathbb{L}$-name for $T \in M[G\restrict{C}][I]$.
	Here $\sigma = \{ \{ \widehat{f} \} \times A_f : \ f \in 2^{<\omega_1} \}$ (where each $A_f$ is an antichain in $\mathbb{L}$, and of size $<\kappa$ by $\eqref{cc-c}$).
	Note that $\cf^{M[G\restrict{C}]}(\kappa) = \kappa$ by Lemma $\ref{cc-cf}$ (because $\mathbb{R}\restrict{C}$ is $\omega_2$-cc by Lemma $\ref{cc}$).	
	This means that for each $f \in 2^{<\omega_1}$ there is an ordinal $\mu_f < \kappa$ such that
	\[ \forall l \in A_f: \ \dom(l) \subseteq \mu_f \times \omega_1. \]
	Define
	\beeq \label{mu} \mu = \sup \{ \mu_f : \ f \in 2^{<\omega_1} \} < \kappa. \eeq
	Then clearly $A_f \subseteq \mathbb{L}\restrict{\mu}$ for each $f \in 2^{<\omega_1}$.
	Since $\mathbb{L} \simeq \mathbb{L}\restrict{\mu} \times \mathbb{L}\restrict{\kappa  \setminus \mu}$, and 
	if $I\restrict{\mu} = I \cap \mathbb{L}\restrict{\mu}$, $I\restrict{\kappa \setminus \mu} = I \cap \mathbb{L}\restrict{\kappa \setminus \mu}$ are filters given by $I$ in the components, then the tree $T = \sigma[I]$ depends only on coordinates in $\mu \times \omega_1$, i.e. on $I\restrict{\mu}$. 
	Therefore 
		\[ T = \sigma[I] = \sigma[I\restrict{\mu}] \in M[G\restrict{C}][I\restrict{\mu}], \]
	as desired.

\ep	
	
\bd \label{M''}
			In the case when $G = I \times G\restrict{C}$ (because $2 \notin C$) we define $M'' = M[I\restrict{\mu}]$,
			and if $G = G\restrict{C}$, then let $M'' = M$.
	
\ed

	
	Note that in each case 
	\begin{equation} \label{benn}
	 T \in  M''[G\restrict{C}].
	\end{equation}
	
	\bc \label{Sclaim}
		There exists a set $S \subseteq C$, $S \in M''$ such that
		\[ M'' \models |S| < \omega_2^{M[G]}, \]
		and
		\[ T \in M''[G\restrict{S}]. \] 
	
	\ec
	\bp
	First, since $\mathbb{L}_\mu$ is $\omega_1$-closed (in $M$), $M^\omega \cap M[I\restrict{\mu}] = M^\omega \cap M$. Therefore one can apply
	Lemma $\ref{cc}$ and obtain that in $M$ (and $M[I\restrict{\mu}]$, resp.). $\mathbb{R}\restrict{C}$ has no antichain of size $\omega_2^{M}$ ($\omega_2^{M[I_\mu]}$, resp.). 
		We get that
	\begin{equation} \label{2cc}
	 M'' \models \mathbb{R}\restrict{C} \text{ is } \omega_2 \text{-cc.} 
	\end{equation}
	By Lemma $\ref{nice}$, there is a nice $\mathbb{R}\restrict{C}$-name $\sigma$ for a subset of $\widehat{2^{<\omega_1}}$ such that
	\[ \mathbb{1}_{\mathbb{R}} \Vdash   (\dot{T} \subseteq \widehat{2^{<\omega_1}}) \rightarrow ( \dot{T} = \sigma) \]
	(where $\dot{T} \in M''$ is a $\mathbb{R}\restrict{C}$-name for $T \in M''[G\restrict{C}]$, and $\sigma = \bigcup_{f \in 2^{<\omega_1}} \{ \widehat{f} \} \times A_f$).
	Define $S' = \cup \{A_f : \ f \in 2^{<\omega_1} \} $, and 
	\[ S = \{\supp(r) : \ r \in S'  \} \subseteq C. \]
	Since each $\supp(r)$ is countable (by the very definition of $\mathbb{R}$ $\eqref{Rdef}$, and $\mathbb{R}\restrict{C}$ is a projection)
	and because  $|S'| \leq \omega_1$ (in $M''$), we have that 
	\begin{equation} \label{Smeret}
		M'' \models	 |S| \leq \omega_1.
	\end{equation}
	(Note that, since we worked in $M''$ we only have that				 $S \in M''$.)

Recall that $G\restrict{C}$ can be identified with the product $G\restrict{S} \times G\restrict{C \setminus S}$.
    Now $\sigma[G\restrict{C}]$ depends only on $G\restrict{C}$'s projection onto $\mathbb{R}\restrict{S}$,
     $G\restrict{S}$, and there is a corresponding $\mathbb{R}\restrict{S}$-name $\sigma' \in M''$ such that
     \[  T = \sigma[G\restrict{C}] =\sigma'[G\restrict{S}]  \in M''[G\restrict{S}]. \]
     
   	\ep

 In the beginning of Lemma $\ref{alfas}$, our condition was equality
 \[ M[G] \models |\mathcal{B}(T)| = \omega_\alpha, \]
 (where $\alpha \notin C$),  
  and our goal is to find a model $N$ between $M''$ and $M''[G\restrict{S}]$ with $T \in N$, and 
 \[ N \models 2^{\omega_1} < (\omega_\alpha)^{M[G]},  \]
 implying that
 \[ N \models |\mathcal{B}(T)| < (\omega_\alpha)^{M[G]} \]
 (since each branch corresponds to a function from $\omega_1$ to $2$).
	
	Now working in $M''$, we are to show that $\mathbb{R}\restrict{S}$, which is a product of two-step iterations
	is isomorphic to a two-step iteration of products.	
	 $\mathbb{R}\restrict{S}$ was a product restricted to the countably supported elements, each coordinate is a
	 two-step iteration $\mathbb{P}_\gamma \ast \name{\mathbb{Q}_\gamma}$. 
	  Recall that an element $r \in \mathbb{R}\restrict{S} \subseteq \prod_{\gamma \in S} \mathbb{R}_\gamma$ has coordinates of the form
	   $r_\gamma = \langle p_\gamma, \name{q_\gamma} \rangle \in \mathbb{R}_\gamma$ ($\gamma \in S$). Here $\name{q_\gamma}$ is a $\mathbb{P}_\gamma$-name for which
	 \[ p_\gamma \Vdash_{\mathbb{P}_\gamma} \name{q_\gamma} \in \name{\mathbb{Q}_\gamma} = \{ f:  \ \dom(f)\subseteq \ \widehat{X_{\alpha}}, \ |\dom(f)|<\omega_1, \ \ran{f}\subseteq \name{T_\alpha}  \} \]
	 	 by $\eqref{iter}$, $\eqref{Qdef}$.
	 \bd \label{PSdef}
		  Let $\mathbb{P}\restrict{S}$ be
	 \[ \mathbb{P}\restrict{S} = \{ p \in \prod_{\delta \in S} \mathbb{P}_\delta: \ |\supp(p)|< \omega_1 \}.  \]
	 \ed

	 We will construct a partial order that has a dense subset isomorphic to $\mathbb{R}^\bullet\restrict{S}$. 
	  	
	   \bd	 \label{QSd}
		   Define $\name{\mathbb{Q} \restrict{S}} \in M$ to be the $\mathbb{P}\restrict{S}$-name so that
		   \[ \mathbb{1}_{\mathbb{P}\restrict{S}} \Vdash \ \   \name{\mathbb{Q} \restrict{S}} = \{ f: \ f \text{ is a function,} \dom(f) \in [S]^{<\omega_1}, (\forall \alpha \in \dom(f)) f(\alpha) \in \name{\mathbb{Q}_\alpha}  \}.  \]
	   \ed
	 
	 \bd Let $(\mathbb{P}\restrict{S} \ast \name{\mathbb{Q}\restrict{S}})^\bullet$ be the following subset of the two-step iteration $\mathbb{P}\restrict{S} \ast \name{\mathbb{Q}\restrict{S}}$.
	 Define $\langle p, \name{q} \rangle$ to be an element of  $(\mathbb{P}\restrict{S} \ast \name{\mathbb{Q}\restrict{S}})^\bullet$, iff
	 \begin{enumerate}
	 	\item $\name{q} = \widehat{h}$ for some $h \in M$ with $\dom(h) \in [S]^{<\omega_1}$,
	 	\item $\supp(p) = \dom(h)$,
	 	\item for each $\alpha \in \dom(h)$ $\langle p_\alpha, \widehat{h(\alpha)} \rangle \in \mathbb{R}_\alpha^\bullet$. 
	 \end{enumerate}
	 \ed
	 
	\bc \label{suru1}  $(\mathbb{P}\restrict{S} \ast \name{\mathbb{Q}\restrict{S}})^\bullet$ is a dense subset of $\mathbb{P}\restrict{S} \ast \name{\mathbb{Q}\restrict{S}}$.
	\ec
	\bp Fix $\langle p, \name{q} \rangle \in \mathbb{P}\restrict{S} \ast \name{\mathbb{Q}\restrict{S}}$. Similarly to the proof of Lemma $\ref{hulyetechnikas}$, first recall that
	 \[ p \Vdash \ \name{q}  \in \name{\mathbb{Q} \restrict{S}} = \{ f: \ f \text{ is a function,} \dom(f) \in [S]^{<\omega_1}, (\forall \alpha \in \dom(f)) f(\alpha) \in \name{\mathbb{Q}_\alpha}  \} \]
	and $p \in \mathbb{P}\restrict{S}$, which is $\omega_1$-closed, as being then countable supported product of $\omega_1$-closed posets (Lemma $\ref{omclos}$). Now a suitable extension $p'$ of $p$ determines $\dom(\name{q}) \in [S]^{<\omega_1}$.
	Then we can extend $p'$ so that  for each $\alpha \in \dom(\name{q})$ it forces a value for $\supp(\name{q}(\alpha)) \in [X_\alpha]^{<\omega_1}$, and by picking a further extension $p'' \leq p'$ we can assume that $p''$ determines $(\name{q}(\alpha))(\beta) \in 2^{<\omega_1}$ for each $\alpha \in \dom(\name{q})$, $\beta \in \supp(\name{q}(\alpha))$. Therefore we obtain a function $h \in M$ with $\langle p'', \widehat{h} \rangle \in \mathbb{P}\restrict{S} \ast \name{\mathbb{Q}\restrict{S}}$, and 
	\[ \langle p'', \widehat{h} \rangle \leq \langle p, \name{q} \rangle.\]
	
	After a further extension we can assume that $\supp(p'') = \dom(h)$, and for each $\alpha \in \supp(p'')$ the tree $p''_\alpha$ has a top level, and the function $h(\alpha): \dom(h(\alpha)) \to p''_\alpha$ maps its domain into the top level of $p''_\alpha$.
	\ep

	\bc \label{suru2} $(\mathbb{P}\restrict{S} \ast \name{\mathbb{Q}\restrict{S}})^\bullet$ is isomorphic to $\mathbb{R}^\bullet\restrict{S}$.
	\ec
	\bp Simply assign to $r = \langle \langle p_\alpha, \widehat{g_\alpha} \rangle: \ \alpha \in S \rangle$ the pair $\langle \langle p_\alpha: \ \alpha \in S \rangle, \widehat{h} \rangle$, where $\dom(h) = \supp(r)$, and  $h(\alpha) = g_\alpha$ for each $\alpha \in \supp(r)$ .
	\ep
		 As $G \restrict{S} \cap \mathbb{R}^\bullet\restrict{S}$ is $\mathbb{R}^\bullet\restrict{S}$-generic over $M''$ (Lemma $\ref{surube}$), Claims $\ref{suru1}$, $\ref{suru2}$ (and applying Lemma $\ref{surube}$ again) imply that there are generic filters $F\restrict{S} \subseteq \mathbb{P}\restrict{S}$, $H\restrict{S} \subseteq \name{\mathbb{Q}\restrict{S}}[F\restrict{S}]$ such that
	 	\begin{equation} \label{fs} T \in M''[G\restrict{S}] = M''[F\restrict{S}][H\restrict{S}]. \end{equation}
	 	We could consider the model $M''[F\restrict{S}]$ in which the trees $T_\delta$ ($\delta \in S$) are already existing elements, but 
	 	at that moment
	 	we have not added the $|X_\delta|$ branches yet, implying that $2^{\omega_1}$ is small.
	 	
	 	\bd \label{MFS}
			In $M''[F\restrict{S}]$ we define
				\begin{equation} \label{Qdff} \mathbb{Q} = \name{\mathbb{Q}\restrict{S}}[F\restrict{S}] = \{ f \in \prod_{\delta \in S} \name{\mathbb{Q}_\delta}[F_\delta] : \ |\supp(f)|< \omega_1 \}, \end{equation}
		
			and
			
			\begin{equation} \label{Xdef}
				 X = \cup \{ X_\delta: \  \delta \in S \},
			\end{equation}
			Define $K \subseteq \mathbb{Q}$ be the filter so that
			\beeq \label{Hbov} M''[G\restrict{S}] = M''[F\restrict{S}][H\restrict{S}] = M''[F\restrict{S}][K] \eeq
			holds.
		\ed
	 
	 	We will have the following crucial lemma.
	 	\begin{lemma} \label{omcc}
	 		If $M' \subseteq M[G] $ is a c.t.m. such that $\mathbb{Q} \in M'$ (and $M \subseteq M'$) 
	 		then
	 		\[ M' \models \ \mathbb{Q} \text{ has the } \omega_2\text{-cc}. \]
	 	\end{lemma}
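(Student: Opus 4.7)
The plan is to prove $\omega_2$-cc in $M'$ by a standard $\Delta$-system argument, once the preliminary step of transferring $CH$ and the relevant cardinality bounds from $M$ to $M'$ is carried out. The main subtlety is not any single computation but rather ensuring that the $\Delta$-system hypothesis and the counting argument go through in the (potentially quite large) intermediate model $M'$; the workhorse here is Corollary $\ref{sorozatt}$.

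First, I would establish that $CH$ holds in $M'$ and that the relevant trees and ambient sets have the expected cardinalities. By Corollary $\ref{sorozatt}$ (applied to $M \subseteq M' \subseteq M[G]$) we have $M^\omega \cap M' = M^\omega \cap M$; in particular $\omega_1^M = \omega_1^{M'}$ and $(2^\omega)^{M'} = (2^\omega)^M$, so $M' \models CH$, and furthermore $(2^{<\omega_1})^{M'} = (2^{<\omega_1})^M$ has cardinality $\omega_1$ in $M'$. Since each $T_\delta$ is a subtree of $2^{<\omega_1}$ of height $\omega_1$ with countable levels, $|T_\delta|^{M'} \leq \omega_1$. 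The set $S' = \{ \delta \in S : X_\delta \cap \bigcup_{f \in \mathbb{Q}} \dom(f) \neq \emptyset \} \in M'$ has cardinality at most $|X|$; and since in $M'$ the family $\bigcup_{\delta} T_\delta$ is contained in $(2^{<\omega_1})^{M'}$, we get $|\bigcup_{\delta \in S} T_\delta|^{M'} \leq \omega_1$.

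Next, suppose for contradiction that in $M'$ there exists an antichain $A \subseteq \mathbb{Q}$ with $|A| = \omega_2$. Every $\dom(f)$ for $f \in A$ is countable by the definition of $\mathbb{Q}$ in $\eqref{Qdff}$. Under $CH$ in $M'$ one has $\alpha^\omega \leq \omega_1 < \omega_2$ for every $\alpha < \omega_2$, so the $\Delta$-system Lemma (Lemma $\ref{delta}$) applies with $\kappa = \omega_1$ and $\theta = \omega_2$. Thinning $A$ if necessary, I may assume that $\{ \dom(f) : f \in A \}$ forms a $\Delta$-system with a countable kernel $K$, so that $K \subseteq \dom(f)$ for every $f \in A$, and the sets $\dom(f) \setminus K$ are pairwise disjoint as $f$ ranges over $A$.

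Finally, I would count the restrictions $f|_K$. Each such restriction is a function $K \to \bigcup_{\delta \in S} T_\delta$, and the codomain has cardinality at most $\omega_1$ in $M'$ by the first step. Hence there are at most $\omega_1^{|K|} = \omega_1^\omega = \omega_1$ possible restrictions (using $CH$ in $M'$). Since $|A| = \omega_2$, the pigeonhole principle furnishes distinct $f, g \in A$ with $f|_K = g|_K$. Because $\dom(f) \cap \dom(g) = K$ on which the two agree, the union $f \cup g$ is a well-defined function with countable domain, and for each $x \in X_\delta \cap \dom(f \cup g)$ its value lies in $T_\delta$; hence $f \cup g \in \mathbb{Q}$ is a common extension of $f$ and $g$, contradicting that $A$ is an antichain. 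The only genuinely delicate step is the verification that $CH$ and $|(2^{<\omega_1})^{M'}| = \omega_1$ are inherited by the arbitrary intermediate model $M'$, and this rests precisely on Corollary $\ref{sorozatt}$.
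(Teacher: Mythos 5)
Your proposal is correct and follows essentially the same route as the paper: a $\Delta$-system argument on the countable domains plus a $CH$ counting of the restrictions to the kernel (the paper likewise invokes $CH$ in the intermediate model via the no-new-$\omega$-sequences fact, its equation $\eqref{nincsujj2}$, which is what your appeal to Corollary $\ref{sorozatt}$ amounts to). Your write-up is just slightly more explicit about why $CH$ and $|2^{<\omega_1}| = \omega_1$ transfer to $M'$ and about verifying that the union of two conditions agreeing on the kernel is a common extension.
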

	 	\begin{proof}
	 		The proof is a straightforward application of the $\Delta$-system lemma, and $CH$ (which holds by Claim $\ref{sorozatt}$).
	 		Assume that $A = \{ f_\gamma: \ \gamma < \omega_2 \}$ is an antichain. Then, since the $f_\gamma$-s are countably supported $\eqref{Qdff}$, and $\omega_1^\omega = \omega_1 < \omega_2$ by $CH$, there is a subset $A' \subseteq A$ of size $\omega_2$ where
	 		\[ \{ \dom(f): \ f \in A' \} \text{ forms a }\Delta\text{-system with kernel }W. \]
	 		Now, since $x \in X_\delta \cap \dom(f)$ implies that $f(x) \in T_\delta$ (and 
	 		$|T_\delta|= \omega_1$)
	 		\[ |\{ f\restrict{W} :  \ f \in A' \}| \leq |\omega_1^W| = |\omega_1^\omega| = \omega_1. \]
	 		Thus one can find $\omega_2$-many elements of $A'$ such that any two of them coincide on $W$ (which is the intersection of their domains).
	 	\end{proof}

		Our next goal is to find a subset $Z \subseteq \cup \{ X_\delta: \ \delta \in S \}$,
		such that $|Z| \leq \omega_1$, and adding the branches indexed by the elements of $Z$ to $M''[F\restrict{S}]$ will result in a model that contains the tree $T$.


		We will see that adding the branches indexed by $Z  \cup \bigcup \{X_\delta: \ \delta \in S, \delta < \alpha \}$ will result a model which cannot contain all the branches $\mathcal{B}(T) \cap M[G]$, because there
		$2^{\omega_1}$ will not be large enough (i.e. in $M''[F\restrict{S}][K\restrict{{Z \cup (\bigcup\{X_\delta: \ \delta \in S, \delta < \alpha \})}}])$).
		From now on we will work in $M''[F\restrict{S}]$ to  prove that forcing with $\mathbb{Q}\restrict{Z  \cup \bigcup \{X_\delta: \ \delta \in S, \delta < \alpha \}}$
		will have these aforementioned properties.
		\begin{cl} \label{Zdff}
						There exists a set $Z \subseteq \cup \{ X_\delta: \ \delta \in S \}$ of size at most $\omega_1$, i.e.
						\[ M''[F\restrict{S}] \models |Z| \leq \omega_1 \]
						 such that
			\[ T \in M''[F\restrict{S}][K\restrict{Z}]. \]
		\end{cl}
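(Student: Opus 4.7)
The plan is to produce $Z$ via a nice-name argument, exploiting the $\omega_2$-chain condition of $\mathbb{Q}$ in $M''[F_S]$ supplied by Lemma $\ref{omcc}$. Work in $M''[F_S]$, where by \eqref{sorozat} and the repeated applications of Lemma $\ref{nincsuj2}$ (combined with the $\omega_1$-closedness of $\mathbb{P}_S$ as a countable-support product of $\omega_1$-closed posets $\mathbb{P}_{\hom}$) no new subsets of countable ordinals have appeared, so $|2^{<\omega_1}| = \omega_1$ and $CH$ still holds. Since $T \subseteq (2^{<\omega_1})^{M''[F_S]}$, Lemma $\ref{nice}$ produces a nice $\mathbb{Q}$-name
\[ \sigma = \bigcup \{ \{ \check{f}\} \times A_f : \ f \in 2^{<\omega_1} \}, \]
with each $A_f \subseteq \mathbb{Q}$ an antichain, and with $\sigma_H = T$.

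The $\omega_2$-cc of $\mathbb{Q}$ (Lemma $\ref{omcc}$) gives $|A_f| \le \omega_1$ for every $f$, and since there are $\omega_1$ many $f$'s, the set $\mathcal{A} := \bigcup_{f \in 2^{<\omega_1}} A_f$ has size at most $\omega_1 \cdot \omega_1 = \omega_1$. Every condition $q \in \mathbb{Q}$ has countable domain by \eqref{Qdff}, so setting
\[ Z = \bigcup \{ \dom(q) : \ q \in \mathcal{A} \} \subseteq X, \]
one gets $|Z| \le \omega_1$ in $M''[F_S]$, as demanded.

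It remains to argue that $T \in M''[F_S][H_Z]$. Since every condition in $\mathcal{A}$ has support inside $Z$, the antichains $A_f$ live inside $\mathbb{Q}_{|Z}$ (incompatibility in $\mathbb{Q}_{|Z}$ transfers to $\mathbb{Q}$ and vice versa for conditions already supported in $Z$), so $\sigma$ may be reinterpreted as a $\mathbb{Q}_{|Z}$-name. Via the natural isomorphism $\mathbb{Q} \simeq \mathbb{Q}_{|Z} \times \mathbb{Q}_{|X \setminus Z}$ together with Lemma $\ref{szorz1}$, we write $H = H_Z \times H_{X \setminus Z}$, and then a straightforward unraveling of the definition of $\sigma_H$ yields $\sigma_H = \sigma_{H_Z}$, whence $T = \sigma_{H_Z} \in M''[F_S][H_Z]$.

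The only non-routine step is the cardinality bookkeeping: one must be sure that $CH$ (in fact $|2^{<\omega_1}| = \omega_1$) survives into $M''[F_S]$, since that is what turns the $\omega_2$-cc bound on individual $A_f$'s into the global bound $|\mathcal{A}| \le \omega_1$. This is where the $\omega_1$-closedness of $\mathbb{P}_S$ (and of $\mathbb{L}_K$, in the case $M'' = M[I_K]$) is essential; once that is in hand, the rest of the argument is a standard nice-name calculation.
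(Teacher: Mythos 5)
Your proposal is correct and follows essentially the same route as the paper: take a nice $\mathbb{Q}$-name for $T$, use the $\omega_2$-cc of $\mathbb{Q}$ (Lemma \ref{omcc}) together with $|2^{<\omega_1}|=\omega_1$ to bound the union of the antichains by $\omega_1$, let $Z$ be the union of the (countable) domains of the conditions occurring, and observe that the name only depends on $H_Z$. Your extra care in justifying $CH$ in $M''[F_S]$ and in unraveling why $\sigma_H=\sigma_{H_Z}$ only makes explicit what the paper leaves as "clearly."
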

		\begin{proof}
			Since $T \subseteq M''[F\restrict{S}]$ (in fact, $T \subseteq (2^{<\omega_1})^M = (2^{<\omega_1})^{M[G]}$, by Claim $\ref{sorozatt}$), and
			$T \in M''[F\restrict{S}][K]$,
			applying Lemma $\ref{nice}$ gives that there is a nice $\mathbb{Q}$-name $\sigma$ in $M''[F\restrict{S}]$ for a subset of $2^{<\omega_1}$, such that
			\[ \mathbb{1}_\mathbb{Q} \Vdash (\dot{T} \subseteq \widehat{2^{<\omega_1}}) \rightarrow (\sigma = T). \]
				$\sigma$ is a nice name i.e. is of the form
				\[ \sigma = \cup \{ \{ \widehat{f}  \}\times A_f : \ f \in 2^{<\omega_1} \}, \]
				where each $A_f \subseteq \mathbb{Q}$ is an antichain, and each $A_f$ is of size at most $\omega_1$ by Lemma $\ref{omcc}$.
				Let 
				\[ Z = \cup \{ \dom(a) : \ a \in A_f, f \in 2^{<\omega_1}  \} \subseteq X ,\]
				where $|Z| \leq |2^{<\omega_1}| \cdot |\omega_1| = \omega_1$.
			Then clearly $\sigma$ depends only on $K\restrict{Z}$, thus 
				\begin{equation}\label{Z} T \in M''[F\restrict{S}][K\restrict{Z}]. \end{equation}
						
		\end{proof}
		
		\bd
		Let $Y$ denote the set
		\begin{equation} \label{Ydefje} Y = Z \cup (\cup\{X_\delta: \ \delta \in S, \delta < \alpha \}). \end{equation}
		Obviously
		\[ T \in M''[F\restrict{S}][K\restrict{Y}],  \]
		let $N$ denote $M''[F\restrict{S}][K\restrict{Y}]$.
		\ed

		\begin{lemma} \label{hiany}
			$ N = M''[F\restrict{S}][K\restrict{Y}]$ contains $T$, but there are branches in $M[G]$ which are not contained in $N$.
		\end{lemma}
	
	\begin{proof}
		The next lemma is the key for verifying that $\mathcal{B}(T) \cap M[G] \supsetneq \mathcal{B}(T) \cap N$, where
		our assumption was that
		\beeq \label{Salpha} M[G] \models |\mathcal{B}(T)| \geq \omega_\alpha, \text{ and } \alpha \notin C,  \eeq
		thus $\alpha \notin S \subseteq C$.
		
		Now we have two cases depending on whether $\{ \delta: \ \delta \in S, \ \delta < \alpha \}$ is empty, or not. If the set $\{ \delta: \ \delta \in S, \ \delta < \alpha \}$ is empty, then since $\alpha \geq 2$ and $\alpha \notin C$ either $\alpha = 2$ holds, implying
		$2 \notin C$, or $\alpha>2$ thus $2 \notin S \subseteq  C$. Therefore Claims $\ref{szamolasb}$ and $\ref{cl1}$ will finish the proof of Lemma $\ref{hiany}$.
		\begin{cl} \label{szamolasb}
			
			If $2 \in C$, then
			\[ N= M''[F\restrict{S}][K\restrict{Y}] \models 2^{\omega_1}  < \omega_\alpha^{M[G]} . \]

		\end{cl}

		\begin{proof}
			First we will need that this case $GCH$ holds above $\omega_2= \omega_2^{M[G]}$ in $M''[F\restrict{S}]$. It suffices to prove the following claim.
			
			\bsc
			$M''[F\restrict{S}]$ can be obtained by a single forcing extension of $M$, where the poset has cardinality$^M$ less than $\omega_2^{M[G]}$.
			\esc
			\bp
			Since we defined $M''$ to be $M$ (Definition $\ref{M''}$), we have that $M''[F\restrict{S}]$ is a forcing extension of $M$, where we forced with the set $\mathbb{P}\restrict{S}$.
			In order to show that
			\[ M \models |\mathbb{P}\restrict{S}| < \omega_2^M, \]
			first, $\mathbb{P}_{\hom} \leq |2^{<\omega_1}|^\omega = \omega_1$ in $M$ (because $\mathbb{P}_{\hom} \subseteq [2^{<\omega_1}]^\omega $
			by  Definition $\ref{PhomD}$), hence $|\mathbb{P}\restrict{S}| \leq |\mathbb{P}_{\hom}|^\omega \cdot |\omega_1^{\omega} = \omega_1$. 
			\ep

			Recall that each $T_\delta$, given by the $\mathbb{P}_\delta$-generic filter $G_\delta$ is a subtree in $2^{<\omega_1}$ 
			of height $\omega_1$, thus is of size $\omega_1$.
			In $M''[F\restrict{S}]$ $\eqref{Qdff}$ and Definition $\ref{Qdef}$ (recalling that $F_\delta \subseteq \mathbb{P}_\delta$ is generic) give us that $\mathbb{Q}\restrict{Y}$ is of size 
			\begin{equation} \label{Qmeret} |\mathbb{Q}\restrict{Y}| = |Y|^\omega \cdot \omega_1^\omega. \end{equation}
			We have to determine $|Y|$. Let 
			\[ \sigma =  \sup (\{ \delta: \ \delta \in S, \delta < \alpha \} ) \geq 2. \]
			
			Since $S \in M''=M$, and by $\ref{Sclaim}$
			\beeq \label{M''S} M'' \models  |S| \leq \omega_1, \eeq
			which gives
			$\cf^{M''}(\sigma) \leq \omega_1$.
			Now we will show that $\sigma < \alpha$. Recall that $\alpha \notin C$ by our assumptions $\eqref{alfdef}$.
			
			\bsc
			\[ \sigma  \in C,  \]
			\[ \text{ in particular, } \sigma < \alpha. \]
			\esc
			\bp
			 First observe that as $M'' = M$, $\eqref{M''S}$ states that $\cf^{M}(\sigma) = \cf^{M''}(\sigma) \leq \omega_1$, and then the condition $\eqref{cofos}$ in Theorem $\ref{foo}$ implies that 
			\[ \sigma = \sup(S \cap \alpha) = \sup(C \cap \sigma) \in C. \]
		
			\ep

			Now $|Y| \leq \omega_1 + \sup \{|X_\delta|: \ \delta \in S, \delta < \alpha \}$, but $|X_\delta| = (\omega_\delta)^{M[G]}$ by Lemma $\ref{GCH0}$ (which is in fact $\omega_\delta$ of $M$), thus
			\begin{equation} \label{Ymerete}
				M''[F\restrict{S}] \models |Y| \leq \omega_{\sigma}^{M[G]} < \omega_\alpha^{M[G]} .
			\end{equation}
			Letting $\lambda$ denote $(\omega_\sigma)^{M[G]}$, note that $\lambda^+ = (\omega_{\sigma+1})^{M[G]}$,
			$\lambda^{++} = (\omega_{\sigma+2})^{M[G]}$ by Lemma $\ref{GCH0}$.
			Using $\eqref{Qmeret}$,
			\[ M''[F\restrict{S}] \models |\mathbb{Q}\restrict{Y}| 	\leq \lambda^\omega \cdot \omega_1^\omega. \]
			Recalling that $\mathbb{Q}$ is $\omega_2$-cc in $M''[F\restrict{S}]$ (Lemma $\ref{omcc}$), Lemma $\ref{hatvanyh}$ states that
			\begin{equation} \label{szam} M''[F\restrict{S}][K\restrict{Y}] \models 2^{\omega_1} \leq ((\lambda^\omega \cdot \omega_1^\omega)^{\omega_1})^{M''[F\restrict{S}]}. \end{equation}
			For calculating this cardinal in $M''[F\restrict{S}]$ we have two cases.
			
			\begin{itemize}
				\item If $\sigma$ is limit (and thus $\cf^{M''[F\restrict{S}]}(\sigma) \leq \omega_1$), then first recall that no cofinalities were collapsed in our case. Using the conditions for $C$ in Theorem $\ref{foo}$ we have that
				$\sigma +1 \in C$, therefore for $\alpha \notin C$ $\alpha \geq \sigma + 2$. This case using that  $\sigma \geq 2$,
				and by the $GCH$ in $M''[F\restrict{S}]$ above $\omega_2^{M[G]}$ for $\lambda = \omega_\sigma^{M[G]}$
				
				\[ M''[F\restrict{S}] \models (\lambda^\omega \cdot \omega_1^\omega)^{\omega_1} \leq 2^\lambda = \lambda^+ = \omega_{\sigma+1}^{M[G]} < \omega_{\alpha}^{M[G]}.    \]
				\item If  $\sigma$ is a successor, then $\cf(\lambda) = \lambda > \omega_1$.
				Hence using again that $GCH$ holds above $\omega_2^{M[G]}$ (and that $\omega_2^{M[G]} \leq \omega_\sigma^{M[G]} = \lambda$)
				\[ M''[F_S] \models (\lambda^\omega \cdot \omega_1^\omega)^{\omega_1} = \sup \{ \beta^{\omega_1} : \beta < \lambda \} = \lambda <  \omega_{\alpha}^{M[G]}. \]
			\end{itemize}
			We get that $\eqref{szam}$ and the above estimations give
			\[ M''[F\restrict{S}][K\restrict{Y}] \models 2^{\omega_1} < \omega_\alpha^{M[G]}. \]
			
		\end{proof}
		
		\bc \label{cl1}
		If $2 \notin C$
		\[  M''[F\restrict{S}][K\restrict{Z}] \models 2^{\omega_1} < \kappa = \omega_2^{M[G]}\]
		\ec
		\begin{proof}
			First recall that 
			\[ M''[F_S] \models |Z| = \omega_1, \]
			
			Since each $T_\delta$ is of size $\omega_1$,
			\[ M''[F\restrict{S}] \models |\mathbb{Q}\restrict{Z}| = |Z|^\omega \cdot \omega_1^\omega = \omega_1^\omega = \omega_1 \]
			by $CH$.
			Then $\mathbb{Q}\restrict{Z}$ is trivially $\omega_2$-cc, and by Lemma $\eqref{hatvanyh}$.
			\[ M''[F\restrict{S}][K\restrict{Z}] \models 2^{\omega_1} = (\omega_1^{\omega_1})^{M''[F\restrict{S}]}. \]
			We can calculate $(\omega_1^{\omega_1})^{M''[F\restrict{S}]}$, because by  Lemma $\ref{GCH}$ (with $M' = M''[F\restrict{S}]$) $\kappa$ is inaccessible in $M''[F\restrict{S}]$, therefore
			
			\[ M''[F\restrict{S}] \models \omega_1^{\omega_1} < \kappa = \omega_2^{M[G]}, \]
			as desired.
		\end{proof}
		This finishes the proof of Lemma $\ref{hiany}$, since it follows from
		$\{ \delta: \ \delta \in S, \ \delta < \alpha \} = \emptyset$ that $Y = Z$, and $\omega_2^{M[G]} < \omega_\alpha^M[G]$.
		
	\end{proof}

	 	Next we prove that $M''[F\restrict{S}][K] = M''[F\restrict{S}][K\restrict{Y}][K\restrict{X \setminus Y}] = N[K\restrict{X \setminus Y}]$ will contain each branch of $T$ from the final model $M[G]$.
	 	\begin{cl} \label{osszesag}
	 		\[ M[G] \cap \mathcal{B}(T) = M''[F\restrict{S}][K] \cap \mathcal{B}(T) \]
	 	\end{cl}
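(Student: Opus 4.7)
The plan is to factor the full extension $M[G]$ over $M''[G_S] = M''[F_S][H_S]$ and verify, forcing step by forcing step, that no new branches of $T$ are added. By the isomorphism $\mathbb{R}_S \simeq \mathbb{P}_S \ast \mathbb{Q}_S$ established in the preceding claim, combined with repeated application of Lemmas~\ref{szorz1} and~\ref{szorz2}, one obtains the decompositions
\[
M[G] = M''[G_S][G_{C \setminus S}]
\qquad \text{if } 2 \in C,
\]
and
\[
M[G] = M''[G_S][G_{C \setminus S}][I_{\overline{K}}]
\qquad \text{if } 2 \notin C,
\]
the latter using the splitting $\mathbb{L} \simeq \mathbb{L}_K \times \mathbb{L}_{\overline{K}}$ built into the definition of $M''$.

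The first step handles the extension by $G_{C \setminus S}$ via Lemma~\ref{nincsuj3} applied with $E = C \setminus S$ and ground model $M' = M''[G_S]$. The hypothesis $M^\omega \cap M''[G_S] = M^\omega \cap M$ follows from $M''[G_S] \subseteq M[G]$ together with $\eqref{nincsujj2}$, and $T \in M''[G_S]$ is a tree of height $\omega_1$ with countable levels (cf.~\eqref{TreszeM}, \eqref{benn}). The lemma then yields
\[
\mathcal{B}(T) \cap M''[G_S][G_{C\setminus S}] \;=\; \mathcal{B}(T) \cap M''[G_S].
\]

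If $2 \in C$ this already establishes the claim. Otherwise, one must still rule out new branches being added by the $\mathbb{L}_{\overline{K}}$-generic $I_{\overline{K}}$. The poset $\mathbb{L}_{\overline{K}}$ is $\omega_1$-closed in $M$ and remains so over $M''[G_S][G_{C\setminus S}]$, since $\omega_1$ is preserved throughout (Corollary~\ref{sorozatt}) and lower bounds of countable decreasing sequences are simply computed as unions. A standard fusion argument then shows that $\omega_1$-closed forcing cannot add branches to a tree of height $\omega_1$ with countable levels: given a name $\dot{b}$ for a new branch, one builds $\langle p_s : s \in 2^{<\omega}\rangle$ so that $p_{s\tieconcat 0}$ and $p_{s\tieconcat 1}$ force incompatible values of $\dot{b}$ at some height $\alpha_s$, takes a lower bound $q_x$ for each $x \in 2^\omega$ by $\omega_1$-closure, and observes that the $q_x$'s witness $2^{\aleph_0}$ distinct values in the countable level $T \cap 2^\alpha$ with $\alpha = \sup_s \alpha_s < \omega_1$, contradicting countability of the level.

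The main obstacle is the careful bookkeeping required to verify the hypothesis of Lemma~\ref{nincsuj3}; one must track the entire tower $M \subseteq M'' \subseteq M''[G_S] \subseteq M''[G_C] \subseteq M[G]$ and confirm at each step that no new countable sequences of $M$-elements appear. The $\mathbb{L}_{\overline{K}}$-step in the case $2 \notin C$ is technically separate but conceptually routine.
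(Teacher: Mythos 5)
Your proposal is correct and follows essentially the same route as the paper: the same factorization $M[G]=M''[F_S][H_S][G_{C\setminus S}]$ (with the extra $I_{\overline{K}}$-step when $2\notin C$), Lemma~\ref{nincsuj3} applied with $M'=M''[F_S][H_S]$ for the $\mathbb{R}_{C\setminus S}$-part, and $\omega_1$-closedness of $\mathbb{L}_{\overline{K}}$ over the intermediate model (via Corollary~\ref{sorozatt}) for the Lévy part. The only cosmetic difference is that you re-derive the ``$\omega_1$-closed forcing adds no new branches to a tree of height $\omega_1$ with countable levels'' fact by the fusion argument, where the paper simply cites the corresponding lemma from Kunen.
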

	 	\begin{proof}
	 		First, $M[G]$ is either  $M[G\restrict{C}]$, or $M[I][G\restrict{C}]$, and $G\restrict{C} \simeq G\restrict{S} \times G\restrict{C \setminus S}$, 
	 		$I \simeq I\restrict{\mu} \times I\restrict{\kappa \setminus \mu}$, and $M''$ is either  $M$, or $M[I\restrict{\mu}]$.
	 		Also recall that
	 		$M''[G\restrict{S}] = M''[F\restrict{S}][K]$ by $\eqref{Hbov}$ from Definition $\ref{Qdff}$.
	 		This means that
	 		\[ M''[F\restrict{S}][K][G\restrict{C \setminus S}] = M''[G\restrict{S}][G\restrict{C \setminus S}] = M''[G\restrict{C}], \]
	 		and our final model $M[G]$ is either $M''[G\restrict{C}]$, or $M''[G\restrict{C}][I\restrict{\kappa \setminus \mu}]$.
	 		
	 		For this forcing extensions (i.e. extension by $G\restrict{C \setminus S}$ and $I\restrict{\kappa \setminus \mu}$) we would like to apply Lemmas $\ref{nincsuj}$, $\ref{nincsuj3}$ to ensure that none 
	 		of them add new branches to $T$.
	 		First, the $\mathbb{R}\restrict{C \setminus S}$-generic filter $G\restrict{C \setminus S}$ doesn't add new branches by Lemma $\ref{nincsuj3}$ (applying with $M' = M''[F\restrict{S}][K]$). Second, $\mathbb{L}\restrict{\kappa \setminus \mu} \in M$ is
	 		$\omega_1$-closed in $M$. But there are no new $\omega$-sequences in $M''[G\restrict{C}] \subseteq M[G]$ by Corollary $\ref{sorozatt}$,
	 		thus it can be easily seen that
	 		\[ M''[G\restrict{C}] \models \mathbb{L}\restrict{\kappa \setminus \mu} \text{ is } \omega_1\text{-closed.} \]
	 		Now we can apply Lemma $\ref{nincsuj}$, thus forcing with the $\omega_1$-closed
	 		$\mathbb{L}\restrict{\kappa \setminus \mu}$ adds no new branches to $T$.
	 	\end{proof}
	 	
	 	As we got that all branches of $T$ are contained in $M''[F\restrict{S}][K]=M''[F\restrict{S}][K\restrict{Y}][K\restrict{X \setminus Y}]$, and $T \in N = M''[F\restrict{S}][K\restrict{Y}] $, it remains to show that 
	 	the extension of $N$ with the filter $K\restrict{X \setminus Y}$ adds more than $\omega_\alpha^{M[G]}$-many branches,
	 	which would contradict $\eqref{alphadef}$.
	 	
	 	\begin{cl}
	 		There exists an ordinal $\gamma \in S$ such that $\gamma > \alpha$.
	 	\end{cl}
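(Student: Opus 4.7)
The plan is to argue by contradiction. Suppose no element of $S$ exceeds $\alpha$. Since $S\subseteq C$ by construction (via the nice $\mathbb{R}_C$-name for $T$), and since $\alpha\notin C$ by $\eqref{alfdef}$, we have $\alpha\notin S$; combining with the contradiction hypothesis yields $S\subseteq\alpha$, i.e.\ every $\delta\in S$ satisfies $\delta<\alpha$.

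Next I would unwind the definition $\eqref{Ydefje}$ of $Y$. Under the assumption $S\subseteq\alpha$, the restriction in $\bigcup\{X_\delta:\ \delta\in S,\ \delta<\alpha\}$ becomes vacuous, so this equals $\bigcup\{X_\delta:\ \delta\in S\}=X$ by $\eqref{Xdef}$. Because $Z\subseteq X$ (indeed $Z$ was built as a union of domains $\dom(a)$ of conditions $a\in\mathbb{Q}$, and $\dom(a)\subseteq X$ by $\eqref{Qdff}$), we obtain $Y=X$, and therefore $\mathbb{Q}_{|Y}=\mathbb{Q}$ and $H_Y=H\simeq H_S$. Consequently
\[ N \;=\; M''[F_S][H_Y] \;=\; M''[F_S][H_S]. \]

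Now I would combine this with Claim~\ref{osszesag}, which says $\mathcal{B}(T)\cap M[G]=\mathcal{B}(T)\cap M''[F_S][H_S]$, to conclude that every branch of $T$ in the final model already lives in $N$. On the other hand, Claim~\ref{hiany} (and the cardinal arithmetic behind it: Lemma~\ref{szamolasb} when $\{\delta\in S:\delta<\alpha\}\neq\emptyset$, and Claims~\ref{cl1}--\ref{cl2} otherwise) gives the quantitative bound
\[ |\mathcal{B}(T)\cap N| \;\leq\; (2^{\omega_1})^N \;<\; \omega_\alpha^{M[G]}. \]
Together with $\eqref{alfdef}$, which asserts $|\mathcal{B}(T)|^{M[G]}\geq\omega_\alpha^{M[G]}$, this contradicts $\mathcal{B}(T)\cap M[G]=\mathcal{B}(T)\cap N$.

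There is no real obstacle here: the statement is essentially a bookkeeping consequence of the machinery already set up. The only mild care needed is in the case analysis baked into Claim~\ref{hiany}: when $S\subseteq\alpha$ happens to be empty-below-$\alpha$ only trivially (i.e.\ $Y=Z$), one must invoke Claims~\ref{cl1}--\ref{cl2} rather than Lemma~\ref{szamolasb}, and check that the resulting bound $(2^{\omega_1})^N<\omega_\alpha^{M[G]}$ still holds --- which it does, using $\alpha\geq 2$ when $2\notin C$ and $\alpha\geq 3$ when $2\in C$.
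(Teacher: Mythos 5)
Your proposal is correct and follows essentially the paper's own argument: the paper also assumes no $\gamma\in S$ exceeds $\alpha$ (using $\alpha\notin C\supseteq S$), concludes $X=Y$ from $\eqref{Xdef}$ and $\eqref{Ydefje}$, and derives a contradiction with the fact that passing from $M''[F_S][H_Y]$ to $M''[F_S][H_Y][H_{X\setminus Y}]$ must add branches of $T$ (i.e.\ Claim~\ref{hiany} combined with Claim~\ref{osszesag}). You merely unpack that contradiction explicitly via the cardinality bound $(2^{\omega_1})^N<\omega_\alpha^{M[G]}$, which is exactly what those claims provide.
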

	 	\begin{proof}
	 			By the definition of $X,Y,Z$ $\eqref{Xdef}$, $\eqref{Ydefje}$ we have
	 			\[ Y = Z \cup \bigcup\{ X_\delta: \ \delta \in S \setminus \alpha \}, \ X = \bigcup\{ S_\delta: \delta \in S \}, \]
	 			and
	 			\[ X \setminus Y \subseteq  \cup \{X_\delta \setminus Z : \ \delta \in S, \ \delta > \alpha \} \]
	 			and we know that 
	 			\[ M''[F\restrict{S}] \models |Z| \leq \omega_1 < \omega_2^{M[G]}. \]
		 	 Now assume on the contrary that $S \subseteq \alpha+1$ (hence also $S \subseteq \alpha$), then
		 	 the equality $X=Y$ would hold, which contradicts to the fact  
		 	 that extension with $K\restrict{X \setminus Y}$ adds branches to $T$ (because of Lemma $\ref{hiany}$).
	 	\end{proof}

	 	Therefore the fact that $|X_\delta| = \omega_\delta^{M[G]}$ for $\delta \in C$ (by  Lemma $\ref{GCH}$), and
	 	\[ M''[F\restrict{S}] \models |Z| \leq \omega_1, \]
	 	 (together with  $0,1 \notin S$) implies that for every $\delta \in C$
	 	 \beeq \label{Xdelta'} |X_\delta \setminus Z| =|X_\delta| = \omega_\delta^{M[G]}. \eeq
	 	 
	 	 \bd
	 	 Let $X_\delta \setminus Z$ be denoted by $X'_\delta$, and let
	 	 \[ X' = X \setminus Y = \cup \{X'_\delta: \ \delta \in S: \delta > \alpha \}. \]
	 	 Then $\eqref{Xdelta'}$ states 
	 	 \[  |X'_\delta| = \omega_\delta^{M[G]} \ \ (\delta > \alpha). \]
	 	 \ed
	 	\begin{cl} \label{xkv}
			There is a 	set $X'' \in N$, $X'' \subseteq X'$ of size $\leq \omega_\alpha^{M[G]}$ such that decomposing $\mathbb{Q}\restrict{X'}$
			into the product $\mathbb{Q}\restrict{X''} \times \mathbb{Q}\restrict{X' \setminus X''}$, and thus obtaining the filters $K\restrict{X''}$, 
			$K\restrict{X' \setminus X''}$,
			\[ N[K\restrict{X''}] \cap \mathcal{B}(T) = M[G] \cap \mathcal{B}(T), \]
			that is, all branches of $T$ are in the model  $N[K\restrict{X''}]$.
	 		
	 	\end{cl}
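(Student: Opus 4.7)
The plan is to attach to each branch $b\in\mathcal{B}(T)\cap M[G]$ a set $X''_b\subseteq X'$ of size $\leq\omega_1$ on which a $\mathbb{Q}_{|X'}$-name for $b$ over $M''[F_S][H_Y]$ depends, and take $X''=\bigcup_b X''_b$; the size bound then follows since, by hypothesis, there are only $\omega_\alpha^{M[G]}$-many branches in $M[G]$.

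By Claim \ref{osszesag}, every $b\in\mathcal{B}(T)\cap M[G]$ lies in $M''[F_S][H_S]=M''[F_S][H_Y][H_{X'}]$, so --- identifying $b$ with the subset $\{b_{|\gamma}:\gamma<\omega_1\}$ of $2^{<\omega_1}$ --- there is a $\mathbb{Q}_{|X'}$-name $\dot b\in M''[F_S][H_Y]$ for $b$. Applying Lemma \ref{nice} inside $M''[F_S][H_Y]$ I replace $\dot b$ by a nice name
\[ \dot b = \bigcup\{\{\check f\}\times A_f : f\in 2^{<\omega_1}\}, \]
with each $A_f\subseteq\mathbb{Q}_{|X'}$ an antichain. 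The $\Delta$-system proof of Lemma \ref{omcc} goes through verbatim for the subforcing $\mathbb{Q}_{|X'}$ inside $M''[F_S][H_Y]$, because $CH$ holds in $M''[F_S][H_Y]$ by the cardinal arithmetic already verified in Lemma \ref{szamolasb}; hence $\mathbb{Q}_{|X'}$ is $\omega_2$-cc there, each $A_f$ has cardinality at most $\omega_1$, and since $|2^{<\omega_1}|^{M''[F_S][H_Y]}=\omega_1$, the name $\dot b$ mentions at most $\omega_1$ conditions of $\mathbb{Q}_{|X'}$, each with countable support in $X'$. Setting
\[ X''_b=\bigcup\{\dom(q):q\in A_f,\ f\in 2^{<\omega_1}\}\subseteq X', \]
we get $|X''_b|\leq\omega_1$, and the product decomposition $\mathbb{Q}_{|X'}\simeq\mathbb{Q}_{|X''_b}\times\mathbb{Q}_{|X'\setminus X''_b}$ with the corresponding splitting $H_{X'}=H_{X''_b}\times H_{X'\setminus X''_b}$ (Lemma \ref{szorz1}) lets us read $\dot b$ as a $\mathbb{Q}_{|X''_b}$-name, so $b\in M''[F_S][H_Y][H_{X''_b}]$.

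Finally, working in $M[G]$, I use AC together with the hypothesis $|\mathcal{B}(T)|^{M[G]}=\omega_\alpha^{M[G]}$ to fix an enumeration $\mathcal{B}(T)\cap M[G]=\{b_\nu:\nu<\omega_\alpha^{M[G]}\}$ and a choice of $X''_{b_\nu}$ for each $\nu$, and set
\[ X''=\bigcup_{\nu<\omega_\alpha^{M[G]}}X''_{b_\nu}\subseteq X'. \]
Since $\alpha\geq 2$, one has $|X''|^{M[G]}\leq\omega_\alpha^{M[G]}\cdot\omega_1=\omega_\alpha^{M[G]}$; every $b_\nu$ lies in $M''[F_S][H_Y][H_{X''}]$ by construction, while the reverse inclusion $M''[F_S][H_Y][H_{X''}]\cap\mathcal{B}(T)\subseteq M[G]\cap\mathcal{B}(T)$ is automatic from $M''[F_S][H_Y][H_{X''}]\subseteq M[G]$. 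The only genuine subtlety is to justify that Lemma \ref{omcc}'s $\Delta$-system argument applies to the subforcing $\mathbb{Q}_{|X'}$ in the intermediate model $M''[F_S][H_Y]$; once $CH$ in $M''[F_S][H_Y]$ is in hand via Lemma \ref{szamolasb}, the remainder is a routine combination of niceness, chain condition, and product-forcing bookkeeping.
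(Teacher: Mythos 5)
There is a genuine gap, and it is exactly the point that the paper's proof is designed to handle. You choose, for each branch $b\in\mathcal{B}(T)\cap M[G]$, a nice $\mathbb{Q}_{|X'}$-name $\dot b\in M''[F_S][H_Y]$ and set $X''_b=\bigcup\{\dom(q):q\in A_f\}$; each such $X''_b$ indeed lies in $M''[F_S][H_Y]$ and has size $\leq\omega_1$. But the assignment $b\mapsto\dot b$ and the final union $X''=\bigcup_{\nu<\omega_\alpha^{M[G]}}X''_{b_\nu}$ are carried out in $M[G]$, using an enumeration of $\mathcal{B}(T)\cap M[G]$ and a choice function that exist only there. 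So your $X''$ is a priori only an element of $M[G]$, and nothing in your argument puts it into $M''[F_S][H_Y]$ (the forcing $\mathbb{Q}_{|X\setminus Y}$ and the remaining factors do add new subsets of $X'$, so this is not automatic). However, the claim implicitly requires $X''\in M''[F_S][H_Y]$: the decomposition $\mathbb{Q}_{|X'}\simeq\mathbb{Q}_{|X''}\times\mathbb{Q}_{|X'\setminus X''}$ must be a product decomposition \emph{in} $M''[F_S][H_Y]$ for Lemma \ref{szorz1} to yield that $H_{X''}$ is $\mathbb{Q}_{|X''}$-generic over that model, and the later steps (Claim \ref{izomc}, the homogeneity argument of Claim \ref{izomcl}, and Lemma \ref{utso}) all treat $\mathbb{Q}_{|X''}$ and $\mathbb{Q}_{|X'''}$ as posets of the intermediate model and force with them over it. With $X''$ only in $M[G]$, the model $M''[F_S][H_Y][H_{X''}]$ is not a forcing extension in the required sense and the rest of the proof of Lemma \ref{alfas} does not go through.

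The paper avoids this by taking a single name: it codes the entire family $\mathcal{B}(T)\cap M[G]$ as one set $B\subseteq\omega_\alpha^{M[G]}\times\omega_1$ lying in $M''[F_S][H_Y][H_{X\setminus Y}]$, applies Lemma \ref{nice} once inside $M''[F_S][H_Y]$ to get a nice $\mathbb{Q}_{|X\setminus Y}$-name $\sigma$, and defines $X''$ as the union of the supports of conditions occurring in $\sigma$; since $\sigma\in M''[F_S][H_Y]$, so is $X''$, and the $\omega_2$-cc of Lemma \ref{omcc} gives $|X''|\leq\omega_1\cdot\omega_\alpha^{M[G]}=\omega_\alpha^{M[G]}$. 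Your branch-by-branch bookkeeping could be repaired, e.g.\ by replacing the external union with this single-name coding, or by adding a covering argument (using the chain conditions of the quotient forcings) showing that your $X''$ is contained in a set of size $\omega_\alpha^{M[G]}$ belonging to $M''[F_S][H_Y]$; but as written the membership of $X''$ in the intermediate model is missing. A smaller inaccuracy: CH in $M''[F_S][H_Y]$ is not what Lemma \ref{szamolasb} provides; it follows from Corollary \ref{sorozatt} (no new $\omega$-sequences and preservation of $\omega_1$), which is how the paper's Lemma \ref{omcc} justifies the $\Delta$-system step.
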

	 	\begin{proof}
		 	 In $M''[F\restrict{S}][K]$ (and in $M[G]$)  there are $\omega_\alpha^{M[G]}$ branches of $T$ (by Claim $\ref{osszesag}$, and $\eqref{alfdef}$),
		  and since each branch corresponds to a function from $\omega_1$ to $2$, the set of branches can be identified with a
		  function $\omega_\alpha^{M[G]} \times \omega_1 \to 2$, which can be identified with a subset of $\omega_\alpha^{M[G]} \times \omega_1$. Let $B \subseteq \omega_\alpha^{M[G]} \times \omega_1$
		  be the set in $N[K\restrict{X \setminus Y}]$ coding the branches of $T$. Then using Lemma $\ref{nice}$,
		  there is a nice $\mathbb{Q}\restrict{X \setminus Y}$-name $\sigma$ in $N$ such that
		  \[ \mathbb{1}_{\mathbb{Q}\restrict{X \setminus Y}} \Vdash (\dot{B} \subseteq \omega_\alpha^{M[G]} \times \omega_1) \rightarrow (\sigma = \dot{B}) \]
		  (where $\dot{B}$ is a $\mathbb{Q}\restrict{X \setminus Y}$-name in $N$ for $B$).
		  Now if
		  \[ \sigma = \{ \widehat{\langle \mu, \nu \rangle} \times A_{\langle \mu, \nu \rangle} : \ \mu < \omega_\alpha^{M[G]}, \nu <\omega_1 \} ,\]
		  where each $A_{\langle \mu, \nu \rangle} \subseteq \mathbb{Q}\restrict{X \setminus Y}$ is an antichain, we have 
		  		   (because by Lemma $\ref{omcc}$ 
		   \[ N \models \mathbb{Q}\restrict{X \setminus Y} \text{  is }\omega_2\text{-cc}), \]
		  that  $|A_{\langle \mu, \nu \rangle}| \leq \omega_1$ for each $\mu$ $\nu$.  
		   Define 
		  \[ X'' = \cup \{ \dom(a) : a \in A_{\langle \mu, \nu \rangle}, \ \mu < \omega_\alpha^{M[G]}, \nu <\omega_1 \} \subseteq X \setminus Y, \]
		  and note that 
		  \[ |X''| \leq \omega_1 \cdot \omega_\alpha^{M[G]} = \omega_\alpha^{M[G]}.\]
		  Then clearly $B = \sigma[K\restrict{X \setminus Y}]$, and for any $\mathbb{Q}\restrict{X \setminus Y}$-generic filter $J$
		  \[ \sigma[J] \in N[J\restrict{X''}]. \]
		 	This completes the proof the Claim.
	 	\end{proof}
	 	
	 	\begin{cl} \label{izomc}
	 		(Let $X''$ be given by Claim $\ref{xkv}$.) There is a set $X''' \subseteq X' \setminus X''$ such that (in $N$) $\mathbb{Q}\restrict{X''}$ is
	 		isomorphic to $\mathbb{Q}\restrict{X'''}$. (This gives rise to the split up of $\mathbb{Q}\restrict{X'}$ to the product 
	 		\[ \mathbb{Q}\restrict{X''} \times \mathbb{Q}\restrict{X'''} \times \mathbb{Q}\restrict{X' \setminus(X'' \cup X''')}). \]
	 	\end{cl}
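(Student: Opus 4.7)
The plan is to build $X'''$ coordinate by coordinate inside $M''[F_S][H_Y]$ and then exhibit the claimed isomorphism as a pullback along a bijection. Since $\alpha \notin C \supseteq S$ and $X_\delta \subseteq Y$ whenever $\delta \in S$ with $\delta < \alpha$, the set $X''$, which lives inside $X \setminus Y$, splits as the disjoint union $\bigcup_{\delta \in S,\ \delta > \alpha} X''_\delta$, where $X''_\delta = X'' \cap X_\delta$. For each such $\delta$ we have $|X''_\delta| \leq |X''| \leq \omega_\alpha^{M[G]}$ by Claim \ref{xkv}, while $|X_\delta| = \omega_\delta^{M[G]} > \omega_\alpha^{M[G]}$ by Definition \ref{Xhalmazok} and Lemma \ref{GCH}. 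Hence $|X_\delta \setminus (Y \cup X'')| = \omega_\delta^{M[G]}$, and we can choose, inside $M''[F_S][H_Y]$, a subset $X'''_\delta \subseteq X_\delta \setminus (Y \cup X'')$ together with a bijection $\pi_\delta : X''_\delta \to X'''_\delta$. Set $X''' = \bigcup\{X'''_\delta : \delta \in S,\ \delta > \alpha\}$ and $\pi = \bigcup \pi_\delta$; then $X''' \subseteq X \setminus (X'' \cup Y)$.

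Next I would verify the isomorphism $\Phi : \mathbb{Q}_{|X''} \to \mathbb{Q}_{|X'''}$ defined by $\Phi(f) = f \circ \pi^{-1}$. The key point is that $\pi$ respects the partition of $X$ into the $X_\delta$'s, i.e.\ $\pi$ sends $X''_\delta$ into $X_\delta$. Hence for $x \in X''_\delta$, the requirement $f(x) \in T_\delta$ from $\eqref{Qdfff}$ is equivalent to $\Phi(f)(\pi(x)) \in T_\delta$, which is precisely the condition for $\Phi(f) \in \mathbb{Q}_{|X'''}$. The countable-support condition and the pointwise extension ordering are preserved along $\pi$, and $g \mapsto g \circ \pi$ is the two-sided inverse, so $\Phi$ is an order-isomorphism.

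Finally, since $X''$, $X'''$, and $X \setminus (X'' \cup X''' \cup Y)$ are pairwise disjoint with union $X \setminus Y$, the restriction map
\[
f \ \mapsto \ \bigl(f|_{X''},\ f|_{X'''},\ f|_{X \setminus (X'' \cup X''' \cup Y)}\bigr)
\]
yields the required isomorphism
\[
\mathbb{Q}_{|X \setminus Y} \ \simeq\ \mathbb{Q}_{|X''} \times \mathbb{Q}_{|X'''} \times \mathbb{Q}_{|X \setminus (X'' \cup X''' \cup Y)};
\]
the only point to verify is that a function on the union has countable support if and only if each of the three restrictions does, which is immediate because a finite union of countable sets is countable.

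There is no genuine obstacle here: once one records that $X'' \subseteq \bigcup_{\delta > \alpha} X_\delta$ and that $|X''| < |X_\delta|$ for each such $\delta$, the matching of $X''_\delta$ with a disjoint copy inside $X_\delta \setminus (Y \cup X'')$ is a direct cardinal-arithmetic exercise, and the forcing isomorphism is forced by the fact that the definition of $\mathbb{Q}_{|W}$ refers to the levels $X_\delta$ only through the sets $W \cap X_\delta$. The most delicate thing to keep in mind is simply that all the constructions have to take place inside $M''[F_S][H_Y]$, which is fine because $X''$, $Y$, and the cardinal identities $|X_\delta|^{M''[F_S][H_Y]} = \omega_\delta^{M[G]}$ are all available in that model by Lemma \ref{GCH} and the preservation of $\omega_1$ given by Corollary \ref{sorozatt}.
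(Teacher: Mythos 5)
Your proposal is correct and follows essentially the same route as the paper: for each $\delta \in S$ with $\delta > \alpha$ you pick $X'''_\delta \subseteq X_\delta \setminus (Y \cup X'')$ of size $|X'' \cap X_\delta|$ (possible since $|X''| \leq \omega_\alpha^{M[G]}$, $|Y| < \omega_\alpha^{M[G]}$, and $|X_\delta| = \omega_\delta^{M[G]} > \omega_\alpha^{M[G]}$), and the level-preserving bijection induces the isomorphism of the forcings, with the product decomposition being the routine restriction argument. Your write-up just makes explicit the bijection $\pi$ and the verification that the paper leaves implicit.
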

		 \begin{proof}
		 	Since $X''$ is given by Lemma $\ref{xkv}$ $|X''| \leq \omega_\alpha^{M[G]}$, and recall that
		 	\[ \mathbb{Q}\restrict{X''} = \{ f: \ \dom(f) \subseteq X'' \text{ is countable,} \ (x \in X_\delta) \rightarrow (f(x) \in T_\delta) \}, \]
		 	and 
		 		 	\[ |Y| < \omega_{\alpha}^{M[G]} \]
		 		 	by $\eqref{Ymerete}$.
		 	For each $\delta > \alpha$, $\delta \in S$,
		 	using the facts
		 	\[ |X_\delta| = |X'_\delta| = \omega_\delta^{M[G]}, \] 
		 	\[ |X''| \leq \omega_\alpha^{M[G]} \]
		 	there is a set  $X'''_\delta \subseteq X'_\delta \setminus X'$		 of size $|X'' \cap X_\delta|$.
		 Letting $X''' = \cup \{ X'''_\delta: \ \delta > \alpha, \delta \in S \}$, it follows that for each $\delta$
		 \[ N \models  |X''' \cap X_\delta| = |X'' \cap X_\delta|,\]
		 therefore (by $\eqref{Qdff}$, and the absoluteness of this definition, Lemma $\ref{sorozatt}$) 
		 \[ N \models \mathbb{Q}\restrict{X''} \simeq \mathbb{Q}\restrict{X'''}, \]
		  as desired.
		 \end{proof}

		The next point will be the key in the proof of Lemma $\ref{alfas}$, where we will make use of the homogeneity of the $T_\delta$-s similarly as in \cite{Jin}.
		
		\begin{lemma} \label{b'}
			Whenever $b \in N$ is a  $\mathbb{Q}\restrict{X''}$-name such that $b[K\restrict{X''}] \in N[K\restrict{X''}]$ is a branch through $T$, 
			$b[K\restrict{X''}] \notin N$, then there exists a $\mathbb{Q}\restrict{X''}$-name $b'$ such that (in $N$)
			\[  \mathbb{1}_{\mathbb{Q}\restrict{X''}} \Vdash \ b' \text{ is a new branch in } T.    \]
		\end{lemma}
		For the proof we will need the following claim.
			
			\begin{cl} \label{izomcl}
				 For any fixed element $q \in \mathbb{Q}\restrict{X''}$, and  filter $J$ which is $\mathbb{Q}\restrict{X''}$-generic 
					 over $N$, there is another $\mathbb{Q}\restrict{X''}$-generic filter $J'$ (over $N$)  such that $q \in J'$, and
					 \[ J' \in N[J]. \]
			\end{cl}
			\begin{proof}
				Recall that each $T_\delta$ is a homogeneous normal tree given by forcing with $\mathbb{P}_\delta$,
				normality implies that for each $t \in T_\delta$, $\htt(t,T_\delta) < \gamma < \omega_1$, there exists an element $t_\gamma \in T_\delta \cap 2^\gamma$ with  $t \subseteq t_\gamma$. Now by the definition of $\mathbb{Q}\restrict{X''}$  $\eqref{Qdff}$, 
				using $\dom(q)$-s countability, we can assume
				that there exists a countable ordinal $\varrho$ such that
				 $x \in \dom(q)$ implies that $q(x) \in 2^\varrho$.
				 
				Let $d = \dom(q) \subseteq X''$. 
				Let $u \in J$ be such that $\dom(u) = d$, and for each $x \in d$ $u(x) \in 2^\varrho$. For each  $\delta \in S$, $x \in d \cap X_\delta$  we define 
				\begin{equation} \label{uq}  F_x = F_{u(x)q(x)}: 2^{\leq \omega_1} \to 2^{\leq \omega_1}, \end{equation}
				which is an automorphism of $T_\delta$ if we restrict to it, because $T_\delta$ is homogeneous.
				Now we define an automorphism $\varphi$ of the poset $\mathbb{Q}\restrict{X''}$.
				\[ \varphi: \mathbb{Q}\restrict{X''} \to \mathbb{Q}\restrict{X''}, \]
				\[ f \mapsto  \varphi(f), \]
				such that
				\begin{equation} \label{fuq} (\varphi(f))(x) = \begin{cases}  F_x(f(x))  \ \text{ if } x \in d \\
							f(x) \ \ \  \text{ otherwise,} \end{cases}
				\end{equation}
				i.e. (considering $\mathbb{Q}\restrict{X''}$ as the countable support product of $T_\delta$-s) we applied
				an automorphism on some coordinates (coordinates in $d$). 
				Obviously $\varphi(u) = q$, by $\eqref{uq}$ and $\eqref{fuq}$.
				It is straightforward to check that $\varphi$ is indeed an automorphism, since for a pair $q_1,q_2 \in \mathbb{Q}\restrict{X''}$
				\[ q_1 \leq q_2 \ \iff \ (\forall x \in \dom(q_1) \cap \dom(q_2)): (q_1(x) \supseteq q_2(x). \]
				Now letting 
				\begin{equation} J' = \varphi[J] = \{f: \ f= \varphi(g) \text{ for some }g \in J \} \in  N[J] \end{equation}
				we obtain a filter containing $\varphi(u) = q$.
				It remained to check that $J'$ is generic over $N$.
				Suppose that $D \in N$ is a dense subset of $\mathbb{Q}\restrict{X''}$. Then
				$\varphi^{-1}(D)$ is also a dense subset (note that $\varphi \in N$), 
				 and if $q_D \in J \cap \varphi^{-1}(D)$, then $\varphi(q_D) \in J'$ and $\varphi(q_D) \in D$.
				
			\end{proof}
		\begin{proof}(Lemma $\ref{b'}$)
			Now suppose that $b[K\restrict{X''}]$ is a new branch of $T$, and $q \in K\restrict{X''}$ forces that, i.e.
			 \begin{equation} \label{qforszolja} q \Vdash \ b \text{ is a new branch in } T. \end{equation}
			 			Now using Claim $\ref{izomcl}$, for any filter $J$ which is generic over $N$, there exists a generic filter $J'$ containing $q$, such that
			\[ J' \in N[J]. \]
			This means that 
			\[ N[J'] \subseteq N[J], \]
			and by $\eqref{qforszolja}$ and $q \in J'$,
			\[ N[J'] \models b[J'] \text{ is a new branch in } T, \]
			thus
			\[ N[J] \models b[J'] \text{ is a new branch in } T. \]
			Since $J$ was arbitrary, we have that
			\[ \mathbb{1}_{\mathbb{Q}\restrict{X''}} \Vdash \ \exists b' \text{ a new branch in } T.    \]
			Finally, applying the maximal principle \cite[II., Thm. 8.2]{kunen}, there exists a name $b' \in N$ such that
			\[ \mathbb{1}_{\mathbb{Q}\restrict{X''}} \Vdash \ b' \text{ a new branch in } T.    \]
		\end{proof}
		
		Now, if $\psi: \mathbb{Q}\restrict{X''} \to \mathbb{Q}\restrict{X'''}$ is an isomorphism (provided by Claim $\ref{izomc}$), 
		(and $\psi^*$ denotes the induced operation between the $ \mathbb{Q}\restrict{X''}$-names and $ \mathbb{Q}\restrict{X'''}$-names)  then by our previous lemma clearly
		\[ N \models \ \left( \mathbb{1}_{\mathbb{Q}\restrict{X'''}} \Vdash \ \psi^*(b') \text{ a new branch in } T \right). \]

		The next Lemma completes the proof of Lemma $\ref{alfas}$, since Lemma $\ref{xkv}$ guarantees that $N[K\restrict{X''}]$ contains all branches of $T$.
		\begin{lemma} \label{utso} 
			For $b'$ given by Lemma $\ref{b'}$
			 \[ N[K\restrict{X''}] \models  \ `` \mathbb{1}_{\mathbb{Q}\restrict{X'''}} \Vdash \ \psi^*(b') \text{ is a new branch in } T" .  \]
		\end{lemma}
		Before the proof recall the facts implying that Lemma $\ref{utso}$ completes the proof of $\ref{alfas}$. By Lemma $\ref{xkv}$ each branch of $T$ which is in $M[G]$ already appears in $N[K\restrict{X''}]$, that is
		\[ \mathcal{B}(T) \cap M[G] = \mathcal{B}(T) \cap N[K\restrict{X''}]. \]
		But $\psi^*(b')[K\restrict{X'''}]$ is a new branch in $N[K\restrict{X''}][K\restrict{X'''}]$ which is a model between 
		$M$ and $M[G]$, a contradiction.
		\begin{proof}
			Whenever $J \subseteq \mathbb{Q}\restrict{X'''}$ is generic over $N$ we have that
			 $\psi^*(b')[J] \in 2^{\omega_1}$ is a new branch of $T$ (i.e. not in $N$), therefore we claim the following.
			\begin{cl} \label{elsomodell}
				\beeq \label{elsomeq} \forall q \in \mathbb{Q}\restrict{X'''}  \exists q_0,q_1 \leq q, \delta <\omega_1: \ \left(N \models \ q_i \Vdash (\psi^*(b'))(\delta)=i  \right) \ (i=0,1) \eeq
			\end{cl}
			\begin{proof}
				Assume on the contrary, that $q$ is a counterexample. But then for each $\delta <\omega_1$ there exists $i_\delta \in \{0,1 \}$ such that  whenever $q' \leq q$ decides $\psi^*(b')(\delta)$, then
				\[  q' \Vdash (\psi^*(b'))(\delta)= i_\delta,  \]
				from which
				\[ q \Vdash (\psi^*(b'))(\delta)= i_\delta. \]
				Now, since we defined $\langle i_\delta: \ \delta < \omega_1 \rangle$ in $N$, $q$ 
				determines $(\psi^*(b'))(\delta)$, thus forces that $\psi^*(b')$ is not a new branch, a contradiction.
			\end{proof}
			But this claim is true even in $N[K\restrict{X''}]$.
			\begin{cl} \label{vegso}
				\[ \forall q \in \mathbb{Q}\restrict{X'''}  \exists q_0,q_1 \leq q, \delta <\omega_1: \ \left(N[K\restrict{X''}] \models q_i \Vdash (\psi^*(b'))(\delta)=i  \right) \  (i=0,1) \]
			\end{cl}
			\begin{proof}
				For a fixed $q$ let $q_0,q_1$ and $\alpha$ given by Claim $\ref{elsomodell}$ for which $\eqref{elsomeq}$ holds. 
					Let $J \subseteq \mathbb{Q}\restrict{X'''}$ be generic over $N[K\restrict{X''}]$. If $q_i \in J$ for some $i \in 2$,
					then $J$ is also generic over $N$, and by Lemma $\ref{elsomodell}$, 
					\begin{equation} \label{absz}  N[J] \models  \psi^*(b'[J](\delta)=i. \end{equation}
				Statement $\eqref{absz}$ is absolute between transitive models,
					thus
					\[  N[K\restrict{X''}][J] \models  \psi^*(b'[J](\delta)=i. \] 
					We conclude that choosing the same $q_i$-s and $\alpha$ works.
			\end{proof}
			Let $B \in N[K\restrict{X''}]$ denote the set of branches of $T$ in $N[K\restrict{X''}]$.
			Now if $q \in \mathbb{Q}\restrict{X'''}$ forces that $\psi^*(b')$ is not a new branch
			\[   N[K\restrict{X''}] \models  \ \left( q \Vdash \ \text{ the branch }\psi^*(b') \text{ is in } B \right),  \]
			then in a fixed generic filter $K\restrict{X'''}$ containing $q$
			\[   N[K\restrict{X''}][K\restrict{X'''}] \models  \psi^*(b)[K\restrict{X'''}] \in B. \]
			
			This implies that there exists a branch $b_0 \in B$, such that $b_0 = \psi^*(b')[K_{X'''})$, which is forced by some $q' \in K\restrict{X'''}$, $q' \leq q$
			\[  N[K\restrict{X''}] \models \left( \mathbb{1}_{\mathbb{Q}\restrict{X'''}} \Vdash \widehat{b_0} =  \psi^*(b') \right), \]
			i.e. $q'$ determines $\psi^*(b')$, which contradicts to Claim $\ref{vegso}$.
			
		\end{proof}
		
\begin{rem}
	By further forcing we could prescribe $2^{\omega_1}$ to be any cardinal greater than or equal to the cardinalities of branches of Kurepa trees.
\end{rem}

\begin{question}
 In Theorem $\ref{foo}$ can we drop the condition that for $\alpha \in C$ if 
 \[ \omega \leq \cf(\alpha) \leq  \omega_1, \] 
 then $\alpha +1$ must be contained in $C$? Or
is it true that the existence of a Kurepa tree with $\omega_\alpha$-many branches (with $\cf(\alpha)$ either countable or $\omega_1$) implies
not only the inequality $2^{\omega_1} \geq \omega_{\alpha+1}$ (Konig's inequality), but the existence of a Kurepa tree with exactly $\omega_{\alpha+1}$ branches?
\end{question} 
 

\section*{Acknowledgment}
I would like to thank my masters thesis supervisor Péter Komjáth for presenting me the problem, and also giving hints and ideas, providing some useful advice.

I also gratefully thank to the referee for  constructive comments and recommendations, also pointing to possible simplifications of otherwise tedious technical arguments.

\normalsize


\begin{thebibliography}{99}
	
	
\bibitem{tools} Goldstern, Martin:  \emph{Tools for your forcing constructions.} H. Judah, editor, Set Theory of the Reals, volume 6 of Israel Mathematical Conference Proceedings (1992), pp. 305–360.	

\bibitem{Jin} Jin, R., Shelah, S.: \emph{Planting Kurepa Trees And Killing Jech-Kunen Trees By Using One Inaccessible Cardinal.} Fundamenta Mathematicae, vol. 141 (1992), pp 287-296.
\bibitem{Jin2} Jin, R., Shelah, S.: \emph{A Model In Which There Are Jech-Kunen Trees But There Are No Kurepa Trees.} Israel Journal of Mathematics, 84 (1993), pp. 1-16.

\bibitem{kunen} Kunen, Kenneth: \emph{Set theory: An introduction to independence proofs.} Studies in logic and foundations of mathematics, vol. 102., North–Holland Publishing Co, 1983.

\bibitem{satur} Kunen, Kenneth: \emph{Saturated Ideals.} J. Symbolic Logic 43 (1978), no. 1, 65--76. 




\bibitem{silb} Silver, J.: \emph{The independence of Kurepa hypothesis and two cardinal conjecture in model theory.} 1971
Axiomatic Set Theory (Proc. Sympos. Pure Math., Vol. XIII, Part I, Univ. California, Los Angeles,
Calif., 1967) pp 383-390 Amer. Math. Soc., Providence, R.I.






\end{thebibliography}
\end{document}